\def\thickhline{%
  \noalign{\ifnum0=`}\fi\hrule \@height \thickarrayrulewidth \futurelet
   \reserved@a\@xthickhline}
\def\@xthickhline{\ifx\reserved@a\thickhline
               \vskip\doublerulesep
               \vskip-\thickarrayrulewidth
             \fi
      \ifnum0=`{\fi}}
\newlength{\thickarrayrulewidth}
\newcolumntype{?}{!{\vrule width 1pt}}
\newlist{thmlist}{enumerate}{1}
\setlist[thmlist]{label=(\roman{thmlisti}), ref=\thethm.(\roman{thmlisti}),noitemsep}
\newcommand{\q}{\mathbf{q}}
\newcommand{\ar}{\mathbf{a}}
\newcommand{\s}{\mathbf{s}}
\newcommand{\un}{\mathbf{u}}
\newcommand{\nsys}{$\mathcal{N}$-system }
\newcommand{\EE}{\mathbb E}
\newcommand{\EEpe}{\mathbb E_{\pi^\epsilon}}
\newcommand{\PPpe}{\mathbb P_{\pi^\epsilon}}
\newcommand{\TT}{\boldsymbol \theta}
\newcommand{\capTT}{\boldsymbol \Theta}
\newcommand{\tilTT}{\Tilde{\boldsymbol \theta}}
\newcommand{\eplim}{\lim_{\epsilon \rightarrow 0}}
\tikzstyle{startstop} = [rectangle, rounded corners, 
\tikzstyle{io} = [rectangle, 
\tikzstyle{process} = [rectangle, 
\tikzstyle{decision} = [rectangle, 
\tikzstyle{arrow} = [thick,->,>=stealth]
\def\EMAIL#1{\href{mailto:#1}{#1}}
\def\URL#1{\href{#1}{#1}}         
\begin{document}


\RUNAUTHOR{Jhunjhunwala and Maguluri}

\RUNTITLE{Heavy Traffic Joint Queue Length Distribution without Resource Pooling}

\TITLE{Heavy Traffic Joint Queue Length Distribution without Resource Pooling}

\ARTICLEAUTHORS{%
\AUTHOR{Prakirt Raj Jhunjhunwala}
\AFF{Columbia University, \EMAIL{prj2122@columbia.edu}, \URL{}}
\AUTHOR{Siva Theja Maguluri}
\AFF{Georgia Institute of Technology, \EMAIL{siva.theja@gatech.edu}, \URL{}}
} 

\ABSTRACT{
This paper studies the heavy-traffic joint distribution of queue lengths in two stochastic processing networks (SPN), viz., an input-queued switch operating under the MaxWeight scheduling policy and a two-server parallel server system called the $\mathcal{N}$-system. These two systems serve as representatives of SPNs that do not satisfy the so-called Complete Resource Pooling (CRP) condition, and consequently exhibit a multidimensional State Space Collapse (SSC). Except in special cases, only mean queue lengths of such non-CRP systems is known in the literature. In this paper, we develop the Transform method to study the joint distribution of queue lengths in non-CRP systems. The key challenge is in solving an implicit functional equation involving the Laplace transform of the heavy-traffic limiting distribution. For the \nsys and a special case of an input-queued switch involving only three queues, we obtain the exact limiting heavy-traffic joint distribution in terms of a linear combination of two iid exponentials. For the general $n\times n$ input-queued switch that has $n^2$ queues, under a conjecture on uniqueness of the solution of the functional equation, we obtain an exact joint distribution of the heavy-traffic limiting queue-lengths in terms of a non-linear combination of $2n$ iid exponentials. 
}


\KEYWORDS{Input-Queued Switch, Drift Method, Heavy Traffic, State Space Collapse, Functional equation, Complete Resource Pooling, Parallel Server System}

\maketitle

\section{Introduction}
Stochastic Processing Networks (SPNs) \cite{williams_survey_SPN} are ubiquitous in engineering with applications in manufacturing, telecommunications, transportation, computer systems, etc. A general stochastic processing network consists of jobs or packets that compete for  limited resources. SPNs in general are modeled using a set of interacting queues. A key performance metric of interest in such systems is delay and queue length. In general, it is not possible to exactly characterize the steady-state queue length behavior in such SPNs. Therefore, SPNs are studied in various asymptotic regimes. In this paper, we consider the heavy-traffic regime where the system is loaded close to its capacity. The queue length in this case, usually blows up to infinity, at a rate of $1/\epsilon$, where $\epsilon$ is the heavy-traffic parameter that denotes the gap between the arrival rate  and the system capacity. Therefore, the objective of interest is typically the asymptotic behavior of the queue length, scaled by $\epsilon$.

Heavy-traffic analysis took root in the work of Kingman \cite{kingman1962_brownian}, who showed that the scaled queue length of a single server queue converges to an exponential random variable in heavy traffic. This was done using diffusion limit approximation and studying the limiting reflected Brownian motion process. 
Since then, a variety of SPNs has been studied in heavy traffic. A key phenomenon in the heavy-traffic regime is that the multi-dimensional queue-length vector typically collapses to a lower-dimensional subspace. This is called the \textit{State Space Collapse} (SSC), and simplifies the analysis of an SPN. When the so-called Complete Resource Pooling (CRP) condition is satisfied, various SPNs exhibit an SSC to a one-dimension subspace, i.e., a line. In this case, the SPN behaves like a single server queue in heavy traffic, and the limiting distribution of scaled queue lengths converges to an exponential random variable. CRP intuitively means that there is a single bottleneck in the system leading to heavy traffic. A popular example of such a system is the load-balancing system under an algorithm such as join-the-shortest queue \cite{foschini1978basic}. 

However, several SPNs that arise in practice do not satisfy the CRP condition, and the SSC occurs to a multi-dimensional subspace. Despite special efforts, except in special cases, the classical diffusion limit approach failed to characterize the heavy-traffic steady state queue length behaviour.
Recent work \cite{maguluri2016heavy, hurtado2022heavy} developed the drift method and used it to characterize the mean of the (weighted) sum of the queue lengths in such systems under great generality. However, it was shown in \cite{hurtado2022heavy} that the drift method is insufficient to even obtain the individual mean queue lengths. Going beyond the mean queue lengths, the key question we focus on in this paper is: \textit{What is the heavy traffic \textbf{joint distribution} of queue lengths in an SPN when the CRP condition is not satisfied?} 

In this work, we consider two well-studied stochastic processing networks (SPN), viz., an input-queued switch policy and a two-server parallel server system called the $\mathcal{N}$-system. For both the system, we characterize the heavy traffic joint distribution by establishing an implicit functional equation, and also provide the solution to the functional equation under certain condition on system parameters. Our main contribution in this work are listed below. 


\subsection{Main Contribution}

The main contribution of this paper is in finding the joint distribution of the Input-queued switch. Historically, developments on input-queued switch have served as guide posts to study more general SPNs \cite{shah_switch_open, hurtado2022heavy}. For the case of Input-Queued Switch, that models a data center network, finding `the complete joint distribution of queue length vector in heavy traffic' was posed as an open problem in 
\cite{shah_switch_open}. As a result, there has been extensive research focused on characterizing the behavior of queue lengths under heavy traffic conditions in an Input-queued switch.

Noteworthy contributions include \cite{maguluri2016heavy}, where the authors obtained the mean delay of an Input-queued switch operating under MaxWeight scheduling in heavy traffic.  In \cite{kang2012diffusion}, the authors characterized the diffusion approximation for the Input-queued switch under MaxWeight scheduling. Despite these efforts, a closed-form expression for the joint distribution in heavy traffic remains open.

In uniform traffic conditions, and under a certain conjecture, we solve this open problem mentioned in \cite{shah_switch_open} by providing a closed form expression for the heavy traffic joint distribution in terms of non-linear combination of i.i.d. exponential random variables.

\subsubsection{Input-queued switch} 
Input-queued switch that also models a data center networks is a discrete-time queueing systems that has served as a representative of non-CRP systems in the literature. In Section \ref{sec: switch}, we consider the input-queued switch with $n$ ports and $n^2$ queues operating under a class of scheduling policies that satisfies SSC (e.g. MaxWeight scheduling or other algorithms studied in \cite{jhunjhunwala2021low}). 

Before presenting the results for a general input-queued switch, in Section  \ref{sec: 3q}, we consider a special case consisting of just three queues, which we call the Three-queue system. The dynamics of the Three-queue system is similar to that of an Input-queued switch, although it has only three-queues. For Three-queue system, the three dimensional queue vector collapses to a two-dimensional subspace in heavy traffic (see Definition \ref{def: 3q_ssc}). We establish the functional equation for the Three-queue system (in Theorem \ref{thm: 3q_functional_eq}) and show that the functional equation has a unique solution (see Lemma \ref{lem: 3q_uniqueness}). 
Using this, we solve the functional equation for the Three-queue system under a special condition on variance of the arrival process, and characterize the heavy-traffic queue-length vector in terms of the linear combinations of two independent exponential random variables, as presented in Theorem \ref{thm: 3q_dist}. 


The results for Input-queued switch is presented in a similar manner as that for the Three-queue system. In Section \ref{sec: switch_ssc}, we present definition of state space collapse in case of Input-queued switch. For Input-queued switch, $n^2$-dimensional queue length vector collapses to a $(2n-1)$-dimensional subspace in heavy traffic (see Definition \ref{def: switch_ssc}). Afterwards, in Section \ref{sec: switch_results}, we obtain the implicit functional equation (see Theorem \ref{thm: switch_functional_eq}) for the transform of the limiting queue-length vector. Solving this functional equation is a major challenge.  In particular, the key difficulty is in establishing uniqueness of its solution. In contrast to the Three-queue system, for the Input-queued switch, we \textit{conjecture} (see Conjecture \ref{lem: switch_uniqueness}) that the functional equation has a unique solution. In case of a uniform traffic, we identify one solution of this functional equation and conjecture that the proposed solution is unique. 
Our solution, as presented in Theorem \ref{thm: switch_dist}, for the heavy-traffic joint distribution of the queue lengths in a switch involves a non-linear combination of $2n$ iid exponential random variables. Mathematically, we obtain that the heavy traffic steady state queue length vector is given by
  \begin{equation*}
     \epsilon q_{i+n(j-1)} \stackrel{d}{\rightarrow}  \Upsilon_i + \Upsilon_{n+j} - 2 \Tilde{\Upsilon}, \ \ \forall i,j \in \{1,\dots,n\},
 \end{equation*}
where $\{\Upsilon_1,\dots,\Upsilon_{2n}\}$ are independent exponential random variable and $\Tilde{\Upsilon}  =\displaystyle \min_{1\leq k\leq 2n}  \Upsilon_k$. An implication of our result is that, under uniform traffic,
\begin{align*}
        \epsilon q_{i+n(j-1)} \stackrel{d}{\rightarrow} \Upsilon \sim \begin{cases}
            \text{Exponential} & w.p. \ \ \frac{1}{n}\\
            \text{Erlang-2} & w.p. \ \ 1-\frac{1}{n},
        \end{cases}
    \end{align*}
where $q_{i+n(j-1)}$ is the queue length corresponding to the $i^{th}$ input and $j^{th}$ output.
It is important to highlight that the Input-queued switch system does not fulfill the CRP condition, even in the case of uniform traffic conditions. As such, for analyzing an SPN under uniform traffic conditions, it is necessary to study the system when the CRP condition is not satisfied.


\subsubsection{\nsys}
\nsys is a two-server parallel server system operating in continuous time under Poisson arrivals and exponential service times. 
It is one of the simplest parallel server system that preserves much of the complexity of more general models, and so has been extensively studied, albeit only under CRP. In this work, we study the \nsys operating under the MaxWeight policy, when the CRP condition is not satisfied. In this case, the two dimensional state of the system collapses to a two-dimensional cone, and thus, there is no dimensionality reduction. However, the area of the cone is half of the area of the actual state space, and so, we still have a state space collapse (see Proposition \ref{prop: n_sys_ssc}). Finally, in Section \ref{sec: n_sys_dist}, we present the functional equation for \nsys in Theorem \ref{thm: n_sys_mgf_eq}. Afterwards, in Theorem \ref{thm: n_sys_distribution}, we present the heavy traffic joint distribution of the steady-state scaled queue length vector of the \nsys in terms of two independent and exponentially distributed random variables. The details of our results for \nsys are presented in Section \ref{sec: n_sys}. This result illustrates the use of Transform method in both the discrete and continuous time Markovian systems.

A brief summary of our results and differences among the systems considered in this paper is in presented in  Table \ref{table: results}.

\newcolumntype{P}[1]{>{\centering\arraybackslash}p{#1}}
\newcolumntype{N}{@{}m{0pt}@{}}
\begin{table}[h]
\begin{center}
\begin{tabular}{?P{3.6cm}?P{3.9cm} ?P{4.1cm}?P{3.4cm}?}
\thickhline
\rule{0pt}{12pt}
\textbf{System}  & \textbf{Three-queue system}  &  \textbf{ Input-queued switch} &  \textbf{\nsys}
\\ 
\thickhline
\rule{0pt}{12pt}
State space collapse   & $3$-dim $\rightarrow$ $2$-dim & $n^2$-dim $\rightarrow$ $(2n -1)$-dim           & $2$-dim $\rightarrow$ $2$-dim   \\ \hline
Lower dimensional representation &\vspace{-3pt}  $2$-dim and unique  &$2n$-dim with $1$ degree of freedom      &\vspace{-3pt} $2$-dim and unique              \\ \hline
 \vspace{-3pt} Variables in func. eq. &   3, with only $2$ independent ones &      $n^2$, with only $(2n-1)$ independent ones                              &  \vspace{-3pt} $2$  \\ \hline
Uniqueness of solution of func. eq.  & \vspace{-3pt} Yes   & \vspace{-3pt}\textit{ Open}      & \vspace{-3pt} Yes 
\\ \hline
\vspace{-3pt} Limiting joint distribution & Linear combination of two independent exponential r.v.              & \textit{non-linear} combination of $2n$ independent exponential r.v.    & Linear combination of two independent exponential r.v. \\ \thickhline
\end{tabular}
\centering
\caption{Summary of the results presented in this paper. 
}
\label{table: results}
\end{center}
\end{table}

\subsubsection{Methodological contribution}
A major methodological contribution of this paper is to extend the transform method for non-CRP systems. Transform method was first developed in \cite{hurtado2020transform} to study the CRP system.
The key idea in the transform method is to work with exponential Lyapunov functions, which enables one to work with Laplace or Fourier transforms. However, \cite{hurtado2020transform} was limited to CRP systems. 

Building upon the transform method for CRP systems, we use complex exponential as the test function for non-CRP systems. For CRP systems, when the drift of this test function is set to zero in steady-state,  one obtains an exact expression for the Fourier transform of the limiting distribution, as SSC occurs to a line.  Based on this limiting transform, one immediately concludes convergence in distribution to an exponential random variable. For non-CRP systems, when the complex exponential is used as test function, after using the multidimensional SSC, we obtain an \textit{implicit functional equation} in the Laplace transform of the limiting distribution. A major challenge in non-CRP systems is in solving this implicit functional equation. When SSC is into two dimensional subspace, such functional equations are solved in the literature \cite{franceschi2019integral}, using Carleman boundary value problem \cite{litvinchuk1970generalized}. We adopt these results to obtain the limiting distribution under two dimensional SSC.


\subsection{Outline of our method}

The analysis presented in this paper is based on the transform method for heavy-traffic analysis that was first developed in \cite{hurtado2020transform}. It is a variant of the drift method, where a complex exponential is chosen as a Lyapunov test function, and its drift is set to zero in steady-state. This leads to working with the Laplace or Fourier transform of the stationary-distribution in the heavy-traffic limit. When the CRP condition is satisfied, one first establishes a one-dimensional SSC. Using this SSC result, setting the drift of the test function to zero, one obtains  an exact expression for the transform of the limiting stationary distribution (i.e., the moment-generating function or the characteristic function). By identifying the limiting MGF with that of an exponential random variable, one concludes convergence in distribution to the exponential. In this paper, we extend this framework to non-CRP systems. 

After first establishing SSC, our framework is then in two steps. The first step is to use the complex exponential as the test function and equate its expected drift to zero in steady-state.
Then, we use the second-order approximation of the complex exponential in terms of the heavy traffic parameter to get the functional equation that characterizes the heavy traffic distribution of the scaled queue length vector. Here we make use of the SSC result. 
To be more specific, due to the SSC, the number of variables in the functional equation matches with the dimension of subspace onto which SSC occurs. 

The second step is to solve the functional equation to get the Laplace transform of the heavy traffic distribution of the steady-state scaled queue length vector. Solving the functional equation, in general, is not easy.  Under some specific conditions on the parameter involved in the functional equation, one could guess the solution and check whether it satisfies the functional equation. If it does, then the solution gives the Laplace transform of the heavy traffic distribution. A crucial step to solve the functional equation is to show that it has a unique solution. This ensures that the guessed solution is the only solution for the functional equation. In this paper, we use the results presented in \cite{franceschi2019integral} to show that if the queueing system has a functional equation in two variables (for example, \nsys and Three-queue system), then there is a unique solution to the functional equation. 
More generally, in the case of $n\times n$ switch, the functional equation has more than two variables, and in this case, we conjecture that the functional equation has a uniques solution.


\subsection{Related Work}

Using diffusion limit to study the behaviour of a queueing system in heavy traffic was first introduced by Kingman \cite{kingman1962_brownian}, where he studied a single server queue. The phenomenon of state space collapse was used to study the heavy traffic optimality in \cite{foschini1978basic}, where the authors studied the performance of the Join-the-shortest queue policy in a multi-server system. This method was successfully applied to several queueing systems that satisfy the CRP condition \cite{harrison1998heavy, harrison1987brownian, Williams_CRP, stolyar2004maxweight, gamarnik2006validity}. The idea has also been used to study some non-CRP systems, eg., bandwidth sharing network \cite{Weina_bandwidth, kang2009state, zwart_bandwidth_diffusion, yeyaobandwidth2012}. A major drawback of the diffusion limit method is that it involves a certain interchange of limits which is hard to establish. 

The idea behind diffusion limits \cite{gurvich2014diffusion,  Williams_state_space, rei_state_space} is to show a \textit{process level convergence} of the scaled queue length vector to a Reflected Brownian Motion (RBM) \cite{harrison_2013_book, morters2010brownian, uhlenbeck1930theory}. Due to state space collapse, the corresponding RBM lives in a lower dimensional subspace compared to the original state space of the queueing system. 
Next step is to study  the stationary distribution of the obtained RBM process. 
The stationary distribution of an RBM motion can be characterized the Basic Adjoint Relationship (BAR) \cite{dai2011nonnegativity}. Solving the BAR to obtain the stationary distribution is hard in general. But under the skew-symmetry condition \cite{harrison1987multidimensional, williams1987reflected, harrison1987brownian}, one can solve the BAR to show that the stationary distribution of the RBM is given by product-form exponential. The authors in \cite{franceschi2019integral, harrison1978diffusion} attempt to solve the BAR even when the skew-symmetry condition is not satisfied, while others \cite{dai2011reflecting, Franceschi2017asymptotic} use the BAR to study the tail behaviour of the stationary distribution of the RBM. Numerical methods to solve the BAR and obtain the stationary distribution is presented in \cite{dai1991steady, dai1992reflected}.

In addition to diffusion limits method, three different \textit{direct methods} to study the heavy traffic behaviour of a queueing systems have been developed in recent years. A major advantage of these direct methods over the diffusion limit method is that these methods directly work with the stationary distribution of the pre-limit system, and so, do not require the interchange of limits. 
The first direct method, named as the \textit{drift method}, uses a test function and equates its expected drift to zero in steady-state. 
Drift method (introduced in \cite{kingman}) was used in \cite{atilla} to study the moments of weighted queue lengths of a multi server system. A common choice for test functions in drift method is polynomial test functions, which can be used to obtain bounds on the moments of queue length. However, for non-CRP systems, the drift method with polynomial test functions is not enough to obtain bounds on the higher moments of queue lengths \cite{Hurtado-gen-switch-SIGMETRICS}. Transform method \cite{kingman1961_charfunction, hurtado2020transform} is an extension of the drift methods where an exponential test function is used. Second is \textit{BAR method} \cite{braverman_BAR} which studies a continuous time system under general arrivals and services by using carefully constructed exponential functions to handle the jumps. The third method is \textit{Stein's method} \cite{gurvich2014diffusion, braverman2017stein}, which focuses on studying the rate of convergence to the diffusion limit. Among the direct methods, so far, the BAR method and the Stein's method were only used to study the systems that satisfy the CRP condition, while only drift method is used to study the non-CRP system. In this paper, we extend the transform methods by using complex exponential as the test function to study two well-known non-CRP system, i.e., \nsys and Input-queued switch. 

A general model for a parallel server system (including the \nsys) is provided in \cite{rubino2009dynamic}. The Brownian control problem for parallel server systems is presented in \cite{harlop_state_space}, where a linear program in terms of arrival rates and mean service times was presented to define the heavy traffic regime for this system and articulate the condition for complete resource pooling. In \cite{belwil_state_space}, the authors studied a Brownian control problem for \nsys under the CRP condition. They proposed a threshold control policy which is asymptotically optimal in the heavy traffic limit. \nsys with reneging were studied in \cite{tezcan2010dynamic} which shows that under certain conditions on the service speed, a $c\mu$-type greedy policy is asymptotically optimal in the heavy traffic. The focus of most of the existing literature on \nsys is minimizing the cost under CRP condition. More recently, the mean delay of parallel server systems are studied under non-CRP condition \cite{Hurtado-gen-switch-SIGMETRICS} with MaxWeight as the scheduling algorithm. To best of our knowledge, ours is the first work that studies the heavy traffic distribution of \nsys under non-CRP condition.


Input-queued switch is one of the most popular queueing system that does not satisfy the CRP condition and as mentioned in \cite{shah2012optimal, williams_survey_SPN}, Input-queued switch serves a guiding principle for design and analysis of scheduling algorithms in general SPNs. The performance and throughput optimality of different scheduling algorithms (including MaxWeight) for Input-queued switch was  studied in \cite{mckeown1995scheduling, mckeown96walrand, 665071}. The holding cost for a generalized switch under CRP condition with MaxWeight as scheduling algorithm was studied in \cite{stolyar2004maxweight}. While the mean delay of Input-queued switch operating under MaxWeight scheduling in heavy traffic was studied using the drift method in the paper \cite{maguluri2016heavy} with some extensions provided in \cite{QUESTA_switch,Hurtado-gen-switch-SIGMETRICS,jhunjhunwala2021low}. The diffusion approximation for Input-queued switch of size $n$ under MaxWeight scheduling was presented in \cite{kang2012diffusion}, where the authors showed the process level convergence of a $(2n-1)$-dimensional workload process to a Semimartingale-RBM.



\subsection{Basic Notations}

We use $\mathbb R$ to denote the set of real numbers and $\mathbb{C}$ to denote the set of complex numbers. Also, we use $\mathbb R_+$ to denote the set of non-negative real numbers. Similarly, $\mathbb R^d$ and $\mathbb C^d$ denote the set of $d$-dimensional real and complex vectors, respectively. For any complex vector $x\in \mathbb C^d$, $Re(x)$ and $Im(x)$ denote the real part and imaginary part of $x$, respectively.  For any vector $\mathbf{x}$, we use $x_i$ to denote the $i^{th}$ element of $\mathbf{x}$.  The inner product of two vectors $\mathbf{x}$ and $\mathbf{y}$ is defined as $\langle \mathbf{x},\mathbf{y}\rangle = \mathbf{x}^T \Bar{\mathbf{y}}$, where  $\Bar{\mathbf{y}}$ is the complex conjugate of $\mathbf{y}$. If the vectors $\mathbf{x}$ and $\mathbf{y}$ are both real vectors, then $\langle \mathbf{x},\mathbf{y}\rangle$ just represents the dot product of two vectors. The function  $|\mathbf{x}| = \sqrt{\langle \mathbf{x} , \mathbf{x}\rangle }$ denotes the absolute value of $\mathbf{x}$. Further, $\|\cdot\|$ denotes the $\ell_2$-norm of real vector in $\mathbb R^d$.  For any set $A$, $\mathbf{1}_A$ denotes the indicator random variable for set $A$. For any positive natural number $d$, $\mathbf{1}_d$ and $\mathbf{0}_d$ denotes the vector of all ones and vector of all zeros of size $d$ respectively, and $\mathbf{I}_{d}$ denotes the identity matrix of size $d$. 

For any queueing system, we use $\pi_\epsilon$ to denote the heavy traffic distribution of the queue length vector, where $\epsilon$ is the heavy traffic parameter that captures the distance of arrival rate vector from the boundary of capacity region. Suppose $\q$ is the queue length vector that follows the steady-state distribution $\pi_\epsilon$, then $\epsilon \q$ is called the steady state scaled queue length vector.  Note that the steady-state distribution itself depends on $\epsilon$, and a more suitable notation would be $\q^\epsilon$, however, we drop the superscript for convenience. We use the term \textit{heavy traffic distribution} to denote the limiting distribution $\epsilon \q$. For any given system, unless otherwise specified, $\mathbb{E}_{\pi_{\epsilon}}[\cdot]$ denotes the expectation under the steady state distribution of the corresponding system. Under the condition that the Laplace transform of the heavy traffic distribution exists for a given $\boldsymbol \theta$, it is given by $\lim_{\epsilon \rightarrow 0} \EEpe [e^{\epsilon \langle \boldsymbol \theta , \q\rangle}]$.

\section{Input-queued switch}
\label{sec: switch}
In this section, we provide our results on the heavy traffic distribution of the Input-queued switch. 
In Section \ref{sec: switch_model}, we present the model for an Input-queued switch and in Section \ref{sec: switch_ssc}, we provide the SSC result for the Input-queued switch. The results regarding the heavy traffic distribution of the Input-queued switch is presented in Section \ref{sec: switch_results}. The results for the Input-queued switch holds under the assumption that a certain conjecture holds (see Conjecture \ref{lem: switch_uniqueness}). In Section \ref{sec: 3q}, we present a simpler system, which we call the Three-queue system, for which the conjecture holds. 




\subsection{Input-queued switch model}
\label{sec: switch_model}
A switch of size $n$ consists of $n$ input ports and $n$ output ports. The message packets flow from input ports to output ports in a time-slotted manner. 
For any time slot $t$, we denote $a_{i+n(j-1)}(t)$ as the number of packets that arrive at input $i$, and to be sent to output port $j$. As there are $n^2$ such input-output pairs, the arrival in any time slot can be represented by an $n^2$ vector  $\ar(t)$. The architecture of the device doesn't allow all the packets to be transferred in one go, which leads to a queue build up on the inputs. We use  $q_{i+n(j-1)}(t)$ (or $\q(t)$ in vector notation) to denote the backlog (or queue) of packets that need to be transferred to the output $j$ from input $i$. We assume that the arrivals are i.i.d. with respect to time $t$ and across the input-output pair $(i,j)$. Also, the arrivals are uniformly bounded by a constant, i.e., there exists $a_{\max}$ such that for $(i,j)$ and $t$, $a_{i+n(j-1)}(t)\leq a_{\max}$. The mean arrival rate vector is given by $\mathbb E[\ar(t)] = \boldsymbol \lambda$ and let $\boldsymbol \sigma^2$ be the co-variance matrix of the arrival vector $\ar(t)$. The independence of the arrivals across the input-output pair gives us that the co-variance matrix $\boldsymbol\sigma^2$ is a diagonal matrix. We say that the arrivals satisfy the \textit{symmetric variance condition} if all the variances are equal, and then $\boldsymbol \sigma^2 =\sigma^2 \mathbf{I}_{n^2} $. Note that the symmetric variance condition is satisfied when the system is in uniform traffic.

The bottlenecks in the system don't allow the transfer of all the packets in the queue simultaneously. Every port can send or receive at most one packet in any time slot, i.e., any input port can send at most one packet in a given time slot. Similarly, any output port can receive at most one packet in a given time slot. The packet transfer can happen only among the connected input-output pairs in that time slot. Therefore, the switch system can be modeled analogously as a complete bipartite graph with $2n $ nodes where $q_{i+n(j-1) }(t)$ denotes the weight of the edge $(i,j)$.
A \textit{schedule} denoted by $\s(t) \in \{0,1\}^{n^2}$ gives the set of input-output pairs that are connected in time slot $t$. The element $s_{i+n(j-1)}(t) =1$ if and only if the pair $(i,j)$ is connected in time slot $t$.  In a complete bipartite graph between the input and output ports, a schedule corresponds to a perfect matching.
It follows that the set of possible schedules $\mathcal{X}$ is given by
\begin{equation*}
    \mathcal{X} = \left\{ \s \in \{0,1\}^{n^2} : \sum_{i=1}^n s_{i+n(j-1)} = 1 \ \forall j, \sum_{j=1}^n s_{i+n(j-1)} = 1 \ \forall i \right\}
\end{equation*}
 The queue length evolution process is given by 
\begin{align*}
    \q(t+1) &= [\q(t) + \ar(t) - \s(t)]^+= \q(t) + \ar(t) - \s(t) + \un(t),
\end{align*}
where operation $[\cdot]^+ = \max(0,\cdot)$ in the above equation is used because the queue length can't be negative. Note that the set $\mathcal{ X}$ corresponds to the set of perfect matchings, and thus we assumed that a schedule is always a complete matching. However, if a queue ($q_{i+n(j-1)}(t)$) is empty, no packets can be transferred even if there is connection between the corresponding input-output pair (i.e., $s_{i+n(j-1)}(t) =1$). Thus, there might be  \textit{unused service}, denoted by $\un(t)$, which arises because it might happen that there is a connection between a input-output pair but there is no packet available to be transferred. For any $i,j \in \{1,2,\dots n\}$, $u_{i+n(j-1)}(t) =1$ if and only if $s_{i+n(j-1)}(t) =1, a_{i+n(j-1)}(t) =0$ and $q_{i+n(j-1)}(t) =0$. This gives us that $q_{i+n(j-1)}(t+1)u_{i+n(j-1)}(t) = 0 $ for all $(i,j)$, which in vector notation is given by $\langle \q(t+1),\un(t) \rangle = 0$.

A scheduling algorithm is then the policy that chooses the schedule in each time slot.
We define the \textit{weight} of the schedule as the sum of the queue lengths that are being served in any time slot. A popular scheduling algorithm for switch system is \textit{MaxWeight} scheduling which chooses the schedule with maximum weight, i.e.,
\begin{equation*}
    \s(t) = \arg\max_{\s \in \mathcal{X}} \langle \q(t) , \s \rangle = \arg\max_{\s \in \mathcal{X}}  \sum_{i=1}^n\sum_{j=1}^n s_{i+n(j-1)} \times q_{i+n(j-1)}(t),
\end{equation*}
where ties are broken arbitrarily as long as the corresponding queue length process   $\{\q(t)\}_{t=0}^\infty$ is Markovian. For example, uniformly at random or according to a static priority policy between the schedules with same weight. The stability of the system in this scenario means that the queue lengths are not going to infinity. More mathematically, we define a system to be stable if the Markov chain $\{\q(t)\}_{t=0}^\infty$ is positive recurrent. The term \textit{capacity region} is used to denote the set of arrival rate vectors for which there exists a scheduling policy for which the system is stable.
The capacity region of the switch system is given by 
\begin{equation*}
	\mathcal C = \Big \{ \mathbf{\boldsymbol \lambda} \in \mathbb{R}_+^{n^2 } : \sum_{i=1}^n \lambda_{i+n(j-1)} < 1 \ \forall j, \ \sum_{j=1}^n \lambda_{i+n(j-1)} < 1 \ \forall i \Big \}.
\end{equation*}
It has been proved in prior literature that MaxWeight scheduling is \textit{throughput optimal} \cite{665071}, i.e., the corresponding Markov chain is stable for any arrival rate vector in $\mathcal{C}$. 
To prove that the Markov chain is stable, one can use the Foster-Lyapunov Theorem by showing that the expected drift of a suitably chosen Lyapunov function is negative as shown in \cite{maguluri2016heavy, 665071}. The two requirements for using the Foster-Lyapunov Theorem, i.e., irreducibility and aperiodicity of the Markov chain can be obtained by using the arguments presented in \cite[Exercise 4.2]{srikant2014communication}. In this paper, we only consider the scheduling algorithms for which the process $\{\q(t)\}_{t=0}^\infty$ forms an irreducible and aperiodic Markov chain.

Let $\mathcal{F}$ denote the part of boundary of the capacity region given by the convex hull of $\mathcal X$, i.e.,
\begin{equation*}
    	\mathcal F = \Big \{ \boldsymbol \nu \in \mathbb{R}_+^{n^2 } : \sum_{i=1}^n \nu_{i+n(j-1)} = 1 \ \forall j, \sum_{j=1}^n \nu_{i+n(j-1)} =1 \ \forall i \Big \}.
\end{equation*}
A switch system is in \textit{heavy traffic} when the arrival rate vector $\boldsymbol \lambda$ approaches the boundary $\mathcal{F}$. For simplicity, we assume that arrival rate vector approaches the boundary along a straight line, i.e., there exists a vector $\boldsymbol \nu \in \mathcal{F}$ and the heavy traffic parameter $\epsilon \in (0,1)$ such that $\boldsymbol\lambda = (1-\epsilon)\boldsymbol \nu$. Further, we assume that none of the arrival rates are zeros, so that $\nu_{\min} = \min_{ij}\nu_{i+n(j-1)} > 0$. 

When the system is stable, that is, the underlying Markov chain is positive recurrent, we use $\pi_\epsilon$ to denote the steady-state distribution for a given $\epsilon$. For convenience, we drop the symbol $t$ to denote the variables in steady state, that is, $\q$ follows the steady state distribution of the queue length process $\{\q(t)\}_{t=0}^\infty$. Further, $\q^+$ denote the state that comes after $\q$, i.e., $\q^+ =[\q+\mathbf a -\mathbf s]^+ = \q + \mathbf a -\mathbf s +{\bf u}$, where $\mathbf a$ follows the same distribution as $\mathbf a(t)$, ${\bf s}$ is the schedule corresponding to the state $\q$, and $\mathbf u$ is the unused service. The steady-state distribution of the random variables $(\q^+,\q,{\bf a},{\bf s},{\bf u})$ depends on the parameter $\epsilon$, but, for the ease of notations, we have avoided using a subscript to denote the steady-state variables. Before presenting the results for Input-queued switch, we present the results for a simpler systems, namely, Three-queue system.


\subsection{Three-queue system}
\label{sec: 3q}
This section presents the heavy traffic distribution for a simpler system, named as the Three-queue system. 
The dynamics of the Three-queue system are similar to that of the Input-queued switch.
We provide the mathematical model for the Three-queue system in Section \ref{sec: 3q_model}.  We also present the state space collapse result of the Three-queue system onto a two-dimensional subspace in the same section. Finally, the results related to the Three-queue system are presented in \ref{sec: 3q_results}.

\subsubsection{Model of the Three-queue system}
\label{sec: 3q_model}
We consider a simplification of $2\times 2$ Input-queued switch (consisting of four queues) by picking the arrival rate for the fourth queue to be zero, i.e., $\lambda_4 = 0$. The dynamics for the Three-queue system are similar to that of the IQ-switch in Section \ref{sec: switch_model} with the modification that $q_4(t)=0$ for all values of $t$. For convenience, we redefine the notations used in Section \ref{sec: switch_model} for the Three-queue system. The queue length vector is given by $\q(t) = \big(q_1(t), q_2(t), q_3(t) \big)$ and the arrival vector is given by $\ar(t) = \big( a_1(t),a_2(t),a_3(t)\big) $ with $\mathbb{E}[\ar(t)] = \boldsymbol{\lambda}$ and Var$(\ar(t)) = \boldsymbol{\sigma}^2$, where $\boldsymbol \sigma^2$ is a $3\times 3$ diagonal matrix. The two possible schedules for the Three-queue system are  $(1,0,0)$ and $(0,1,1)$. Further, without loss of generality, we assume that the schedule $(1,0,0)$ is chosen only if $q_1(t)>0$, as otherwise, choosing the schedule $(1,0,0)$ does not provide any service.
Under this assumption, the unused service for the first queue is always zero, i.e., $u_1(t)=0$ for all $t\geq 0$. 

The capacity region and the corresponding boundary for Three-queue system is given by 
\begin{align*}
    \mathcal{C} &= \Big\{ \boldsymbol \lambda \in \mathbb{R}^3_+ : \lambda_1+ \lambda_2 <1, \lambda_1 + \lambda_3 <1\Big\}, \\
    \mathcal{F} &= \Big\{ \boldsymbol \nu \in \mathbb{R}^3_+ : \nu_1+ \nu_2 =1, \nu_1+ \nu_3 =1\Big\}.
\end{align*}
Note that we have considered $\mathcal{F}$ to be the part of the boundary of the capacity region $\mathcal C$ for which the system does not satisfy complete resource pooling.
We assume that for any $i$, $\lambda_i >0$, otherwise the system can be further simplified. As such, there exists a $\boldsymbol \nu \in \mathcal{F}$ and the \textit{heavy traffic parameter} $\epsilon \in (0,1)$ such that $\mathbf{\boldsymbol \lambda} = (1-\epsilon)\mathbf{\boldsymbol \nu}$. The parameter $\epsilon$ is a measure of the distance of the arrival rate vector from the boundary $\mathcal{F}$. The system approaches heavy traffic as the heavy traffic parameter $\epsilon$ goes to $0$. 
And as $\lambda_i > 0$ for all $i$, $\nu_{\min} \triangleq \min_{i} \nu_{i}  >0.$ 

Similar to that for the Input-queued switch, for the Three-queue system also, we use $\pi_\epsilon$ to denote the steady-state distribution for a given $\epsilon$, under the condition that the system is stable, that is, the underlying Markov chain is positive recurrent. The random variable $\q$ follows the steady state distribution $\pi_\epsilon$, and $\q^+$ denote the state that comes after $\q$, i.e., $\q^+ =[\q+\mathbf a -\mathbf s]^+ = \q + \mathbf a -\mathbf s +{\bf u}$, where $\mathbf a$ follows the same distribution as $\mathbf a(t)$, ${\bf s}$ is the schedule corresponding to the state $\q$, and $\mathbf u$ is the unused service. 

\subsubsection{State Space Collapse in the Three-queue system}
Consider the subspace $\mathcal{S} \subseteq \mathbb{C}^3$ given by,
\begin{equation*}
    \mathcal{S} = \Big\{ \mathbf{y} \in \mathbb{C}^3 : y_1 = y_2 + y_3 \Big\} = \Big\{ \mathbf{y} \in \mathbb{C}^{3} : \exists \mathbf{r} \in \mathbb{C}^{2} \ s.t. \ \mathbf{y} =\mathbf B \mathbf r \Big\},
\end{equation*}
where \[\mathbf B = \begin{bmatrix}
                        1 & 1 \\
                        1 &  0\\
                        0 & 1
\end{bmatrix}^T.\] Thus, $\mathcal{ S}$ is the space spanned by the columns of $\mathbf B$. We use $\mathcal{S}^\perp$ to denote the space orthogonal to $\mathcal{S}$.
For any vector $\mathbf{x} \in \mathbb{C}^3$, we define $\mathbf{x}_{\|}$ as the projection onto the subspace $\mathcal{S}$ and $\mathbf x_{\perp} = \mathbf{x} - \mathbf{x}_{\|}$. 
For the queue length vector $\mathbf q$, we use $\mathbf w$ to denote $\mathbf w = (\mathbf B^T \mathbf B)^{-1}\mathbf B^T \mathbf q$. Then, $\mathbf q_{\|} = \mathbf B \mathbf w$, and we call $\mathbf w$ the lower dimensional representation of $\q_\|$ as $\mathbf w$ is in $\mathbb C^2$.
 
\begin{definition}[State Space Collapse]
\label{def: 3q_ssc}
For the Three-queue system as defined in Section \ref{sec: 3q_model}
operating under a given scheduling algorithm, we say that the algorithm achieves \textit{State Space Collapse} (SSC) if the underlying Markov chain is positive recurrent and there exists $\theta_0$ and $\epsilon_0$ such that for every $\theta <\theta_0$ and $\epsilon<\epsilon_0$, 
the steady state queue length vector satisfies,
    \begin{equation*}
        \EEpe[e^{\theta \|  \q_{\perp} \|}] < C^\star< \infty.
    \end{equation*}
As a conclusion, for any scheduling policy that achieves SSC, we have that for every $ \theta \in \mathbb R$, $\lim_{\epsilon \rightarrow 0} \EEpe[e^{\epsilon \theta \|  \q_{\perp} \|}]  < \infty.$ Furthermore, for each $r\in \mathbb Z_+$, there exists a $C_r$ independent of $\epsilon$ such that,
    \begin{equation}
    \label{eq: 3q_bound}
        \EEpe \Big [\|\q_{\perp }\|^r \Big] \leq C_r \quad \forall r \geq 1.
    \end{equation}
\end{definition}

For Three-queue system, MaxWeight scheduling chooses the schedule $(1,0,0)$ if $q_1(t)> q_2(t)+q_3(t)$, otherwise it chooses the schedule $(0,1,1)$. For Three-queue system, MaxWeight scheduling achieves SSC according to Definition \ref{def: 3q_ssc}. The proof follows on similar lines as that in \cite[Proposition 2]{maguluri2016heavy}.

\begin{remark}
\label{rem: 3q_qperp}
  For the Three-queue system, when a scheduling algorithm achieves SSC according to Definition \ref{def: 3q_ssc}, the scaled perpendicular component converges to $0$ is distribution, that is, $\epsilon \q_{\perp} \stackrel{d}{\rightarrow} 0$. As such, the distribution of the scaled queue length vector $\epsilon \q$ matches the distribution of the corresponding parallel component $\epsilon \q_{\|}$ as $\epsilon \rightarrow 0$. And so, the limiting distribution of $\epsilon \q_{\|}$ is sufficient to characterize the limiting distribution of $\epsilon \q$.
\end{remark}

We define two sets given by
\begin{align*}
    \boldsymbol \Theta & = \{\boldsymbol \theta \in \mathbb{C}^{3} : \boldsymbol \theta \in \mathcal{S}, \ Re(\mathbf B^T \boldsymbol \theta) \leq  \mathbf 0_{2}\}\\
    &= \{\boldsymbol \theta \in \mathbb{C}^{3} : \theta_1 = \theta_2+\theta_3, Re(2\theta_2+\theta_3)\leq 0, Re(\theta_2+2\theta_3)\leq 0\},\\
    \tilde{ \boldsymbol \Theta } & = \{\tilTT \in \mathbb{C}^{3}: Re(\tilTT) \leq \mathbf 0_{3}\},
\end{align*}
and we use $\TT$ and $\tilTT$ to denote an element in $\capTT$ and $\tilde{ \boldsymbol \Theta }$, respectively. 
For any $\tilTT \in \tilde{ \boldsymbol \Theta }$, we can write $\tilTT = \TT_\| + \TT_\perp$, where $\TT_\| \in \mathcal{S}$ and $\TT_\perp \in \mathcal{S}^\perp$. 
Then, as $\mathcal{S}$ is the span of columns of $\mathbf B$, we have $\mathbf B^T \tilTT = \mathbf B^T \TT_\| $. 
Further, as $Re(\tilTT) \leq \mathbf 0_3$, we have $Re(\mathbf B^T \TT_\|) = Re(\mathbf B^T  \tilTT) \leq \mathbf 0_3$, implying $\TT_\| \in \capTT$. 
As such, $ \tilde{ \boldsymbol \Theta } \subseteq \capTT \oplus \mathcal S^\perp$, and for any $\tilTT \in \tilde{ \boldsymbol \Theta }$, we can write $\tilTT = \TT +\TT_\perp$ such that $\TT\in \capTT$ and $\TT_\perp \in \mathcal{ S}^\perp$. 

\begin{proposition}
\label{prop: 3q_mgf_convergence}
   Consider the Three-queue system as defined in Section \ref{sec: 3q_model} operating under a scheduling algorithm that achieves SSC according to Definition \ref{def: 3q_ssc}. Suppose $\tilTT \in \tilde{ \boldsymbol \Theta } \subseteq \capTT \oplus \mathcal S^\perp$ such that $\tilTT = \TT +\TT_\perp$, where $\TT\in \capTT$ and $\TT_\perp \in \mathcal{ S}^\perp$. Then, 
   \begin{align}
   \label{eq: 3q_q_qpara_mgf}
       \eplim \EEpe \left[ e^{\epsilon \langle \tilTT,\q\rangle}\right] = \eplim \EEpe \left[ e^{\epsilon \langle \TT,\q\rangle}\right].
   \end{align}
   Further, if $\eplim \EEpe \left[ e^{\epsilon \langle \TT,\q\rangle}\right] = \EE\left[ e^{ \langle \mathbf B^T \TT,\mathbf X\rangle}\right]$ for all $\TT\in \capTT$, then 
   \begin{align*}
       \epsilon \q \stackrel{d}{\rightarrow} \mathbf B \mathbf X.
   \end{align*}
\end{proposition}


From Eq. \eqref{eq: 3q_q_qpara_mgf} of Proposition \ref{prop: 3q_mgf_convergence}, we have that, if we can solve for $\eplim \EEpe \left[ e^{\epsilon \langle \TT,\q\rangle}\right]$ for all $\TT \in \capTT$, we can extend the result to solve for $\eplim \EEpe \left[ e^{\epsilon \langle \tilTT,\q\rangle}\right]$ for all $\tilTT \in \tilde{\boldsymbol \Theta}$. And from Lemma \ref{lem: laplace_convergence} (presented later), we know that characterizing $\eplim \EEpe \left[ e^{\epsilon \langle \tilTT,\q\rangle}\right]$ for all $\tilTT \in \tilde{\boldsymbol \Theta}$ is enough to provide the limiting distribution of the scaled queue length vector $\epsilon \q$. Another argument is that, for any $\TT \in \capTT \subseteq \mathcal{S}$, \[\langle \TT, \q \rangle = \langle \TT, \q_{\|} \rangle = \langle \tilTT, \q_{\|} \rangle,\] as $ \langle \TT, \q_{\perp} \rangle =0, \ \forall \TT \in \capTT \subseteq \mathcal{S}$ and $\langle \TT_{\perp},  \q_{\|} \rangle = 0$, and so, \[\eplim \EEpe \left[ e^{\epsilon \langle \TT,\q\rangle}\right] = \eplim \EEpe \left[ e^{\epsilon \langle \tilTT,\q_{\|}\rangle}\right]\] 
gives the limiting distribution of the scaled perpendicular component $\epsilon \q_{\|}$.  Thus, Eq. \eqref{eq: 3q_q_qpara_mgf} implies that limiting distribution of $\epsilon \q$ and $\epsilon \q_{\|}$ matches as $\epsilon \rightarrow 0$, and this formalizes the statement provided in Remark \ref{rem: 3q_qperp}.

One can show that \[\{\TT \in \mathbb C^3 : \TT \in \mathcal{S}, Re(\TT) \leq \mathbf 0_3\} \oplus \mathcal{S}^\perp \subsetneq \tilde{\boldsymbol \Theta}.\]
This means that solving for $\eplim \EEpe \left[ e^{\epsilon \langle \TT,\q\rangle}\right]$ for $\TT \in \{\TT \in \mathbb C^3 : \TT \in \mathcal{S}, Re(\TT) \leq \mathbf 0_3\}$ is not enough to extend the result to solve for $\eplim \EEpe \left[ e^{\epsilon \langle \tilTT,\q\rangle}\right]$ for all $\tilTT \in \tilde{\boldsymbol \Theta}$. Essentially, $\capTT$ is a large enough set such that solving for $\eplim \EEpe \left[ e^{\epsilon \langle \TT,\q\rangle}\right]$ for $\TT\in\capTT$ is enough to characterize the limiting distribution, and considering a set smaller than $\capTT$ might not be sufficient. Further, as $\capTT \subset \mathcal{S}$, it has some useful structural properties that can be exploited to make the analysis simpler. As such, in Theorem \ref{thm: 3q_functional_eq}, we establish the function equation only for $\TT\in \capTT$.

\proof{\textit{Proof outline of Proposition \ref{prop: 3q_mgf_convergence}}.} For $\TT\in \capTT$ and $\TT_\perp \in \mathcal{ S}^\perp$, with $\TT\in \capTT$ and $\TT_\perp \in \mathcal{ S}^\perp$, we have 
\begin{align*}
    \epsilon\langle \tilTT,\q\rangle = \epsilon\langle \TT,\q\rangle + \epsilon \langle \TT_\perp,\q_\perp\rangle.
\end{align*}
From SSC, we know that $\epsilon \q_\perp \approx 0$ for $\epsilon$ small, and so, $\epsilon\langle \tilTT,\q\rangle$ and $\epsilon\langle \TT,\q\rangle$ follows similar distribution (or have similar Laplace transform) for $\epsilon$ small.  The complete proof for Eq. \eqref{eq: 3q_q_qpara_mgf} is provided in Lemma \ref{lem: 3q_mgf_equivalence_b}. Next, as $\mathcal{S}$ is the span of columns of $\mathbf B$, we have $\mathbf B^T \boldsymbol\theta_\perp = \mathbf 0$ for all $\boldsymbol \theta_\perp \in \mathcal S^\perp$. Thus, $\mathbf B^T \tilTT = \mathbf B^T \TT$, and so,
\begin{align*}
    \eplim \EEpe \left[ e^{\epsilon \langle \tilTT,\q\rangle}\right] = \eplim \EEpe \left[ e^{\epsilon \langle \TT,\q\rangle}\right] = \EE\left[ e^{ \langle \mathbf B^T \TT,\mathbf X\rangle} \right] = \EE\left[ e^{ \langle \mathbf B^T \tilTT,\mathbf X\rangle} \right] = \EE\left[ e^{ \langle \tilTT, \mathbf B \mathbf X\rangle} \right].
\end{align*}
Afterwards, the conclusion $\epsilon \q \stackrel{d}{\rightarrow} \mathbf B \mathbf X$ follows using Lemma \ref{lem: laplace_convergence}. \hfill $\blacksquare$
\endproof

\subsubsection{Results for Three-queue system}
\label{sec: 3q_results}
In this section, we present the results related to the heavy traffic distribution of the Three-queue system. Theorem \ref{thm: 3q_functional_eq} presents the functional equation for the heavy traffic distribution of the Three-queue system. Theorem \ref{thm: 3q_dist} provides the heavy traffic distribution for the Three-queue system under a certain condition on the variance of the arrival process. 

\begin{theorem}[Functional equation for Three-queue system]
\label{thm: 3q_functional_eq}
 Consider the Three-queue system as defined in Section \ref{sec: 3q_model} operating under a scheduling algorithm that achieves SSC according to Definition \ref{def: 3q_ssc}. Let 
 \begin{align*}
     \boldsymbol \Theta &= \{\boldsymbol \theta \in \mathbb{C}^{3} : \boldsymbol \theta \in \mathcal{S}, \ Re(\mathbf B^T \boldsymbol \theta) \leq  \mathbf 0_{2}\} \\
     &= \{\boldsymbol \theta \in \mathbb{C}^{3} : \theta_1 = \theta_2+\theta_3, Re(2\theta_2+\theta_3)\leq 0, Re(\theta_2+2\theta_3)\leq 0\}.
 \end{align*}
 Then, for all $\boldsymbol \theta \in \boldsymbol \Theta$, the limiting scaled queue length satisfies,
 \begin{equation}
 \label{eq: 3q_functional_eq}
     \mathcal{P}(\TT):= \left( -\frac{1}{2}\langle \boldsymbol{\theta}, \mathbf{1}_{3} \rangle + \frac{1}{2} \langle \boldsymbol{\theta} ,  \boldsymbol\sigma^2 \boldsymbol{\theta} \rangle \right) L(\boldsymbol{\theta}) + \theta_2 M_2(\boldsymbol\theta)+\theta_3 M_3(\boldsymbol\theta) = 0.
 \end{equation}
 where 
 \begin{align*}
    L(\boldsymbol \theta) = \lim_{\epsilon \rightarrow 0} \EEpe[e^{\epsilon \langle \boldsymbol \theta, \q \rangle}], && M_2(\boldsymbol \theta) = \lim_{\epsilon \rightarrow 0} \frac{1}{\epsilon} \EEpe [u_2e^{\epsilon \langle \boldsymbol \theta, \q \rangle}], && M_3(\boldsymbol \theta) = \lim_{\epsilon \rightarrow 0} \frac{1}{\epsilon} \EEpe [u_3e^{\epsilon \langle \boldsymbol \theta, \q \rangle}].
 \end{align*}
\end{theorem}

Theorem \ref{thm: 3q_functional_eq} provides the functional equation for the Three-queue system. As mentioned in Proposition \ref{prop: 3q_mgf_convergence}, it is enough to solve for $L(\TT)$ for $\TT\in\capTT$ to characterize the limiting distribution of $\epsilon \q$. Further,  the structure of the set $\boldsymbol \Theta$ is appropriately chosen such that the limiting quantities $L(\boldsymbol \theta), M_2(\boldsymbol \theta)$ and $M_3(\boldsymbol \theta)$ are well-defined. Note that we do not have a term $M_1(\boldsymbol \theta) =0$ in the functional equation given in Eq. \eqref{eq: 3q_functional_eq} as $u_1 =0$ by our convention. 



The functional equation presented in Theorem \ref{thm: 3q_functional_eq} is derived by performing the Lyapunov drift analysis on the complex exponential function $e^{\epsilon\langle \TT,\q\rangle}$ and then using a second-order Taylor approximation in terms of the heavy traffic parameter $\epsilon$. The structure of the set $\capTT$ plays a key role in much of the mathematical simplification while doing the second-order approximation. We provide a brief outline of the proof in Section \ref{sec: 3q_outline}, and the complete proof is given in Appendix \ref{app: 3q_functional_eq}.

After establishing the functional equation, the next step is to solve the derived functional equation to get the Laplace transform of the heavy traffic distribution of the steady-state scaled queue length vector. Solving the functional equation, in general, is not easy.  One way to solve the functional equation is to
guess the solution and check whether it satisfies the functional equation or not. If it does, then the solution gives one possible candidate for the Laplace transform of the heavy traffic distribution. Next crucial step is to show that the functional equation has a unique solution. This ensures that the guessed solution is the only solution for the functional equation. For Three-queue system, the functional equation $\mathcal{P}(\TT) =0, \ \forall \TT \in \capTT$ provided in Eq. \eqref{eq: 3q_functional_eq} has a unique solution. 

Before proving the uniqueness result, we provide a technical detail regarding the functions $L(\boldsymbol \theta), M_2(\boldsymbol \theta)$ and $ M_3(\boldsymbol \theta)$ defined in Theorem \ref{thm: 3q_functional_eq}. Essentially, $L(\boldsymbol \theta), M_2(\boldsymbol \theta)$ and $ M_3(\boldsymbol \theta)$ are well-defined functions, and they satisfy some crucial properties that are required to show the uniqueness of solution of the functional equation. 
Suppose $\mathcal{L}(\boldsymbol \Theta)$ is a class of functions such that 
\begin{align}
\label{eq: 3q_analytic_functions}
 \mathcal{L}(\boldsymbol \Theta) =  \{f: \boldsymbol \Theta \rightarrow \mathbb C \ : f \text{ is non-zero, holomorphic, continuous and bounded over }   \boldsymbol \Theta \}.
\end{align}

We claim that the functions $L(\boldsymbol \theta), M_2(\boldsymbol \theta)$ and $ M_3(\boldsymbol \theta)$ lie in the set $\mathcal{L}(\boldsymbol \Theta)$. The statement is as follows.

\begin{lemma}
\label{lem: 3q_analytic}
    Consider the Three-queue system as defined in Section \ref{sec: 3q_model} operating under a scheduling algorithm that achieves SSC according to Definition \ref{def: 3q_ssc}. Then, the functions $(L(\boldsymbol \theta), M_2(\boldsymbol \theta), M_3(\boldsymbol \theta))$ as defined in Theorem \ref{thm: 3q_functional_eq} lie in the set $\mathcal{L}(\boldsymbol \Theta)$.
\end{lemma}

 The argument for Lemma \ref{lem: 3q_analytic} follows by using the properties of the complex exponential function along with Lemma \ref{app: 3q_mgf_equivalence} presented in Appendix \ref{app: 3q}. Complete proof is provided in Appendix \ref{app: 3q_uniqueness}.
 Next, we present the uniqueness result for the functional equation for the Three-queue system, i.e., there is a unique set of function $(L(\boldsymbol \theta), M_2(\boldsymbol \theta), M_3(\boldsymbol \theta))$ that lie in the class $\mathcal L(\TT)$
and solves the functional equation $\mathcal{P}(\TT) =0, \ \forall \TT \in \capTT$ provided in Eq. \eqref{eq: 3q_functional_eq}.

\begin{lemma}
\label{lem: 3q_uniqueness}
Consider the Three-queue system as defined in Section \ref{sec: 3q_model} operating under a scheduling algorithm that achieves SSC according to Definition \ref{def: 3q_ssc}. 
Then, there is a unique set of functions $(L(\boldsymbol \theta), M_2(\boldsymbol \theta), M_3(\boldsymbol \theta))$ such that $L(\boldsymbol \theta), M_2(\boldsymbol \theta)$ and $ M_3(\boldsymbol \theta)$ lie in $\mathcal{L}(\boldsymbol \Theta)$ and satisfies the functional given in Eq. \eqref{eq: 3q_functional_eq} for all values of $\boldsymbol \theta \in \boldsymbol \Theta$.
\end{lemma}

Lemma \ref{lem: 3q_uniqueness}  says that the functional equation for the Three-queue system has a unique solution, i.e., there is a unique $L(\boldsymbol\theta)$ defined over the set $\boldsymbol\Theta$ that satisfies Eq. \eqref{eq: 3q_functional_eq} for all $\TT\in \capTT$. A crucial element in order to prove Lemma \ref{lem: 3q_uniqueness} is to show that the limiting quantities $M_2(\TT)$ and $M_3(\TT)$ depend only on a single variable. 
And to show this, we consider a linear transformation of $\TT$, that is, we consider $\boldsymbol \psi = \mathbf B^T \TT$. Then, we show that $M_2(\TT)$ is a function of only $\psi_2$ and $M_3(\TT)$ is a function of only $\psi_1$. This allows us to rewrite the functional equation in Eq. \eqref{eq: 3q_functional_eq} in a form that is consistent with the class of functional equations in Lemma \ref{lem: functional_uniqueness} (see Section \ref{sec: mgfplusuniqueness}). A key element here is that the set $\capTT$ is a subset $\mathcal{S}$, and we exploit this to do the linear transformation functional equation as mentioned. Afterwards, by using Lemma \ref{lem: 3q_analytic} and Lemma \ref{lem: functional_uniqueness}, we imply that the functional equation in Eq. \eqref{eq: 3q_functional_eq} has a unique solution, that is, there is a unique $L(\TT)$, which is a valid Laplace transform and also satisfy the functional equation for all values of $\TT\in\capTT$.

Under the linear transformation $\boldsymbol \psi = \mathbf B^T \TT$, the set $\capTT$ maps to $\boldsymbol \Psi$, where
\[\boldsymbol\Psi =\mathbf B^T \capTT = \{ \boldsymbol \psi \in \mathbb C^2: Re(\boldsymbol \psi ) \leq \mathbf 0_2\}. \]
As the functional equation in Eq. \eqref{eq: 3q_functional_eq} holds for any $\TT\in \capTT$, the functional equation obtained after the linear transformation $\boldsymbol \psi = \mathbf B^T \TT$ holds for any $\boldsymbol \psi \in \boldsymbol \Psi$. This satisfies the essential conditions needed to use Lemma \ref{lem: functional_uniqueness}. 
The complete proof of Lemma \ref{lem: 3q_uniqueness} is provided in Appendix \ref{app: 3q_uniqueness}.

\begin{theorem}[Limiting distribution for Three-queue system]
\label{thm: 3q_dist}
 Consider the Three-queue system as defined in Section \ref{sec: 3q_model} operating under a scheduling algorithm that achieves SSC according to Definition \ref{def: 3q_ssc}. Suppose the variance vector $\boldsymbol \sigma^2$ satisfy the condition $2\sigma_1^2 = \sigma_2^2 + \sigma_3^2 $. Then, the limiting distribution is given by 
  \begin{equation*}
      \epsilon \q  \stackrel{d}{\rightarrow} (\Upsilon_1+\Upsilon_2,\Upsilon_1,\Upsilon_2) = \mathbf B \boldsymbol \Upsilon,
 \end{equation*}
 where $\Upsilon_1$ and $\Upsilon_2$ are independent exponential random variables with mean $\frac{3\sigma_2^2 + \sigma_3^2}{4}$ and  $\frac{\sigma_2^2 + 3\sigma_3^2}{4}$ respectively.
\end{theorem}

Theorem \ref{thm: 3q_dist} provides the limiting distribution of the Three-queue system as a linear combination of two independent and exponentially distributed random variables when the variance of the arrival process satisfies the condition  $2\sigma_1^2 = \sigma_2^2 + \sigma_3^2 $. According to Theorem \ref{thm: 3q_dist}, under the condition $2\sigma_1^2 = \sigma_2^2 + \sigma_3^2 $, the limiting distribution is the linear span of the columns of matrix $\mathbf B$ with the exponentially distributed coefficients. Mathematically, $\epsilon \q \stackrel{d }{\rightarrow} \mathbf B \boldsymbol \Upsilon $, where $\boldsymbol \Upsilon = ( \Upsilon_1,  \Upsilon_2)$. 
The proof of Theorem \ref{thm: 3q_dist} uses the results provided in Theorem \ref{thm: 3q_functional_eq} and Lemma \ref{lem: 3q_uniqueness}. The mathematical details regarding the proof of Theorem \ref{thm: 3q_dist} are provided in Appendix \ref{app: 3q_dist}. 

\begin{remark}
Theorem \ref{thm: 3q_dist} also implies that the lower dimensional representation $\mathbf w = (\mathbf B^T \mathbf B)^{-1}\mathbf B^T \mathbf q$ satisfies $\epsilon \mathbf w \stackrel{d }{\rightarrow} \boldsymbol \Upsilon $. Such an implication does not hold for the Input-queued switch as the corresponding $\mathbf B^T \mathbf B$ matrix is not invertible. 
\end{remark}

\begin{corollary}
    \label{cor: 3q_max}
     For the Three-queue system as defined in Section \ref{sec: 3q_model}, MaxWeight scheduling satisfies the functional equation given in Theorem \ref{thm: 3q_functional_eq} and the heavy traffic distribution in Theorem \ref{thm: 3q_dist}.
\end{corollary}

As mentioned in Section \ref{sec: 3q_model}, MaxWeight scheduling achieves SSC according to Definition \ref{def: 3q_ssc}. Now, Corollary \ref{cor: 3q_max} is a direct application  of  Theorem \ref{thm: 3q_functional_eq} and Theorem \ref{thm: 3q_dist}.\\


\subsection{State space collapse in an Input-queued switch}
\label{sec: switch_ssc}

In this section, we present some existing results that are necessary for the analysis of the heavy traffic distribution of the switch. Before presenting the definition of SSC for switch, we present some required geometry.
Let $\mathbf B \in \{0,1\}^{n^2\times2n}$ is such that for any $1\leq i,j\leq n$,  
    \begin{equation*}
        B_{i+n(j-1),i} =B_{i+n(j-1),n+j} = 1,
    \end{equation*}
and all other elements are $0$. The matrix $\mathbf B$ is constructed in a way that it takes in a vector $\mathbf r\in \mathbb C^{2n}$ and gives a vector $\mathbf y = \mathbf B \mathbf r \in \mathbb C^{n^2}$ such that $y_{i+n(j-1)} = r_i +r_{n+j}$. Similarly, $\mathbf B^T$ takes in a vector $\mathbf y \in \mathbb C^{n^2}$ and gives out a vector $\mathbf z =\mathbf B^T \mathbf y \in \mathbb C^{2n}$ that consists of the `row sums' and `column sums' of $\mathbf y$, that is, $z_i = \sum_{j=1}^n y_{i+n(j-1)}$ and $z_{n+j} = \sum_{i=1}^n y_{i+n(j-1)}$ for all $1\leq i,j\leq n$.

Consider the subspace $\mathcal{S} \subseteq \mathbb{C}^{n^2}$ to be the space spanned by the columns of $\mathbf B$,
\begin{equation*}
    \mathcal{S} = \Big\{ \mathbf{y} \in \mathbb{C}^{n^2} : \exists \mathbf{r} \in \mathbb{C}^{2n} \ s.t. \ y_{i+n(j-1)} =  r_{i} + r_{n+j} \Big\} = \Big\{ \mathbf{y} \in \mathbb{C}^{n^2} : \exists \mathbf{r} \in \mathbb{C}^{2n} \ s.t. \ \mathbf{y} =\mathbf B \mathbf r \Big\}. 
\end{equation*}
Suppose for any vector $\mathbf x \in \mathbb{C}^{n^2}$, $\mathbf x_{\|}$ denotes the projection of $\mathbf{x}$ onto the space $\mathcal{S}$ and $\mathbf x_{\perp} = \mathbf x - \mathbf x_{\|}$. As $\mathbf x_{\|} \in \mathcal{S}$, $\exists \mathbf{r} \in \mathbb{C}^{2n}$ such that $\mathbf x_{\|} =\mathbf B \mathbf r$.  In this case, we call $\mathbf r$ the lower dimensional (or $2n$-dimensional) representation of $\mathbf x_{\|}$.
\begin{remark}
The columns of the matrix $\mathbf B$ are not linearly independent, and so the vector $\mathbf r$, such that $\mathbf x_{\|} =\mathbf B \mathbf r$, need not be unique. If we are given a candidate $\mathbf r$ such that $\mathbf{x}_{\|} = \mathbf{B}\mathbf{r}$, we can create another candidate by considering $\mathbf{r}' = \mathbf{r} - r\begin{bmatrix} \mathbf{1}_n \\-\mathbf{1}_n \end{bmatrix}$ for any $r\in \mathbb C$. Then, $\mathbf r'$ satisfies 
$\mathbf{x}_{\|} = \mathbf{B}\mathbf{r}'$ as $\mathbf{B}\begin{bmatrix} \mathbf{1}_n \\-\mathbf{1}_n \end{bmatrix} = \mathbf{0}_{n^2}$ by structure of matrix $\mathbf B$. This is because even though we are using $2n$ elements (as $\mathbf r \in \mathbb C^{2n}$) to represent a vector in $\mathcal{S}$, the affine dimension of $\mathcal{S}$ is $(2n-1)$ as we can fix one of the elements of $\mathbf r$ to $0$. 
This is the same as saying that the column rank of the matrix $\mathbf B$ is $(2n-1)$.
\end{remark}

We also define the cone $\mathcal K \subset \mathcal S$ given by,
\begin{align*}
    \mathcal{K} &= \Big\{ \mathbf{y} \in \mathbb{R}^{n^2} : \exists \mathbf{w} \in \mathbb{R}^{2n}_+ \ s.t.  \ y_{i+n(j-1)} =  w_{i} + w_{n+j} \Big\} = \Big\{ \mathbf{y} \in \mathbb{R}^{n^2} : \exists \mathbf{w} \in \mathbb{R}^{2n}_+ \ s.t. \ \mathbf{y} =\mathbf B \mathbf r \Big\}. 
\end{align*}
For any vector $\mathbf x\in \mathbb R^{n^2}$, we use $\mathbf x_{\| \mathcal K}$ as the projection to the cone $\mathcal{K}$, and $\mathbf x_{\perp \mathcal K} = \mathbf x - \mathbf x_{\| \mathcal K}$. Note that as $\mathcal{K} \subset \mathcal{S}$, we get that $\|\mathbf x_{\perp}\| \leq \|\mathbf x_{\perp \mathcal K}\|$. Similar to that for the projection onto the space $\mathcal{S}$, the lower dimensional representation to the projection onto the cone $\mathcal{K}$ is also not unique, i.e., there can be multiple $\mathbf w$ such that $\mathbf x_{\| \mathcal K} = \mathbf B \mathbf w$. One way to ensure that the lower dimensional representation is unique is to enforce an extra condition that the smallest element in the vector $\mathbf w$ is zero, i.e., $\min_{1\leq i\leq 2n}w_i =0$.

A difference between the projection the space $\mathcal{ S}$ and the projection to the cone $\mathcal{K}$ is that the projection to $\mathcal{S}$ follows a distributive property, i.e., for $\mathbf x,\mathbf y \in \mathbb R^{n^2}$, $(\mathbf x + \mathbf y)_{\|} = \mathbf x_{\|} + \mathbf y_{\|}$, but the projection to the cone $\mathcal{K}$ need follow the distributive property, i.e., it might happen that $(\mathbf x + \mathbf y)_{\|\mathcal{K}} \neq \mathbf x_{\|\mathcal{K}} + \mathbf y_{\|\mathcal{K}}$. An important consequence of this is that we can write the recursive equation for queue length evolution process for the parallel component as 
\begin{equation*}
    \q_{\|}(t+1) = \q_{\|}(t) +\mathbf a_{\|}(t) - \mathbf s_{\|}(t) +\mathbf u_{\|}(t),
\end{equation*}
while a similar recursive equation cannot be written for the projection of the queue length vector onto the cone $\mathcal{K}$. Thus, while doing the Lyapunov drift analysis, we consider the projection of the queue length vector to the subspace $\mathcal{S}$. Next, we present the definition of the SSC in an Input-queued switch.

\begin{definition}[State Space Collapse]
\label{def: switch_ssc}
For the Input-queued switch as defined in Section \ref{sec: switch_model}
operating under a given scheduling algorithm, we say that the algorithm achieves \textit{State Space Collapse} (SSC), if there exists $\epsilon_0,\theta_0 >0$ such that for every $\theta < \theta_0$ and $\epsilon<\epsilon_0$, the steady state queue length vector satisfies,
    \begin{equation*}
        \EEpe[e^{ \theta \| \q_{\perp}\|}] \leq  \EEpe[e^{ \theta \| \q_{\perp \mathcal K}\|}] < C^\star< \infty,
    \end{equation*}
    where $C^\star$ is a constant independent of $\epsilon$.
As a consequence, for any scheduling policy that achieves state space collapse, for every $\theta <\theta_0$, \[\eplim \EEpe[e^{\epsilon \theta \| \q_{\perp}\|}] \leq \eplim  \EEpe[e^{\epsilon \theta \| \q_{\perp \mathcal K}\|}]  < \infty.\] Furthermore, there exists a $C_r$ independent of $\epsilon$ such that,
    \begin{equation}
    \label{eq: switch_qperp_bound}
        \EEpe \Big [\|\q_{\perp }\|^r \Big] \leq \EEpe \Big [\|\q_{\perp \mathcal K }\|^r \Big] \leq C_r \quad \forall r \geq 1.
    \end{equation}
\end{definition}
According to Definition \ref{def: switch_ssc}, for any scheduling algorithm that achieves SSC, the moments of $\q_{\perp}$ are bounded irrespective of the heavy traffic parameter $\epsilon$. We know from \cite[Proposition 1]{maguluri2016heavy}, that in heavy traffic, queue length scales at least at the rate of $\Omega(1/\epsilon)$.
This shows that, in heavy traffic, $\q_{\perp}$ is insignificant compared to $\q$ and so, $\epsilon\q$ is nearly equal to its projection $\epsilon\q_{\|}$. 
For the Input-queued switch, MaxWeight scheduling achieves SSC according to the Definition \ref{def: switch_ssc}, and the proof is provided in \cite[Proposition 2]{maguluri2016heavy}.
In \cite{jhunjhunwala2021low}, the authors prove that there is a large class of \textit{MaxWeight-Like} algorithms that also achieve SSC according to Definition \ref{def: switch_ssc}.

With slight abuse of notation, we redefine the set $\capTT$ and $\tilde{\boldsymbol \Theta}$ as 
\begin{align*}
    \boldsymbol \Theta & = \{\boldsymbol \theta \in \mathbb{C}^{n^2} : \boldsymbol \theta \in \mathcal{S}, \ Re(\mathbf B^T \boldsymbol \theta) \leq  \mathbf 0_{2n}\}\\
    \tilde{ \boldsymbol \Theta } & = \{\tilTT \in \mathbb{C}^{n^2}: Re(\tilTT) \leq \mathbf 0_{n^2}\},
\end{align*}
and we use $\TT$ and $\tilTT$ to denote an element in $\capTT$ and $\tilde{ \boldsymbol \Theta }$, respectively.

\begin{proposition}
\label{prop: switch_mgf_convergence}
   Consider the Input-queued switch as defined in Section \ref{sec: switch_model} operating under a scheduling algorithm that achieves SSC according to Definition \ref{def: switch_ssc}. Suppose $\tilTT \in \tilde{ \boldsymbol \Theta } \subseteq \capTT \oplus \mathcal S^\perp$ such that $\tilTT = \TT +\TT_\perp$, where $\TT\in \capTT$ and $\TT_\perp \in \mathcal{ S}^\perp$. Then, 
   \begin{align}
   \label{eq: switch_q_qpara_mgf}
       \eplim \EEpe \left[ e^{\epsilon \langle \tilTT,\q\rangle}\right] = \eplim \EEpe \left[ e^{\epsilon \langle \TT,\q\rangle}\right].
   \end{align}
   Further, if $\eplim \EEpe \left[ e^{\epsilon \langle \TT,\q\rangle}\right] = \EE\left[ e^{ \langle \mathbf B^T \TT,\mathbf X\rangle}\right]$ for all $\TT\in \capTT$, then 
   \begin{align*}
       \epsilon \q \stackrel{d}{\rightarrow} \mathbf B \mathbf X.
   \end{align*}
\end{proposition}

Proposition \ref{prop: switch_mgf_convergence} has same implications for the Input-queued system as that of Proposition \ref{prop: 3q_mgf_convergence} for Three-queue system. As such, we only need to solve for $\eplim \EEpe \left[ e^{\epsilon \langle \TT,\q\rangle}\right] $ for $\TT\in \capTT$ to get the limiting distribution of the scaled queue length process.

\subsection{Results for Input-queued switch}
\label{sec: switch_results}
In this section, we present our results for the Input-queued switch. Theorem \ref{thm: switch_functional_eq} provides a functional equation that characterizes the heavy traffic distribution of the scaled queue length vector for Input-queued switch. Under the assumption that Conjecture \ref{lem: switch_uniqueness} holds true, Theorem \ref{thm: switch_dist} provides the solution of the functional equation given in Theorem \ref{thm: switch_functional_eq} under the symmetric variance condition. 

\begin{theorem}[Functional equation for Input-queued switch]
\label{thm: switch_functional_eq}
Consider the Input-queued switch as defined in Section \ref{sec: switch_model}
operating under a given scheduling algorithm that achieves state space collapse according to Definition \ref{def: switch_ssc}. Let $\boldsymbol \Theta = \{\boldsymbol \theta \in \mathbb{C}^{n^2} : \boldsymbol \theta \in \mathcal{S}, \ Re(\mathbf B^T \boldsymbol \theta) \leq  \mathbf 0_{2n}\}$.
Then, for any $\boldsymbol\theta \in \boldsymbol \Theta$, the heavy traffic scaled queue length vector satisfies,
 \begin{equation}
 \label{eq: switch_functional_eq}
   \mathcal{P}(\TT) = \left( 2\langle \boldsymbol{\theta}, \mathbf{1}_{n^2} \rangle - n \langle \boldsymbol{\theta} ,  \boldsymbol\sigma^2 \boldsymbol{\theta} \rangle \right) L(\boldsymbol{\theta}) - 2n \langle \boldsymbol{\theta}, \mathbf{M}(\boldsymbol{\theta}) \rangle =0,
 \end{equation}
  where 
 \begin{align*}
    L(\boldsymbol \theta) = \lim_{\epsilon \rightarrow 0} \EEpe[e^{\epsilon \langle \boldsymbol \theta, \q \rangle}], && M_{k}(\boldsymbol \theta) = \lim_{\epsilon \rightarrow 0} \frac{1}{\epsilon} \EEpe [u_{ k}e^{\epsilon \langle \boldsymbol \theta, \q \rangle }], \ \ \forall k \in \{1,2,\dots,n^2\}.
 \end{align*}
\end{theorem}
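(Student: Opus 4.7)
The plan is to apply the transform method: use the complex exponential $V(\q) = e^{\epsilon\langle\boldsymbol\theta,\q\rangle}$ as a test function, set its stationary drift to zero, Taylor-expand in $\epsilon$, and read off \eqref{eq: switch_functional_eq} by matching orders. First I would check well-posedness: for $\boldsymbol\theta \in \Theta$ the inclusion $\boldsymbol\theta \in \mathcal S$ gives a representation $\boldsymbol\theta = \mathbf B\mathbf w$ for some $\mathbf w \in \mathbb C^{2n}$, so on the cone component $\q_{\|\mathcal K} = \mathbf B\mathbf r$ with $\mathbf r \ge \mathbf 0_{2n}$ one has $Re\langle\boldsymbol\theta,\q_{\|\mathcal K}\rangle = \mathbf r^T Re(\mathbf B^T\boldsymbol\theta) \le 0$ by the constraint $Re(\mathbf B^T\boldsymbol\theta) \le \mathbf 0_{2n}$; combined with the SSC exponential-moment bound on $\q_{\perp\mathcal K}$ in \eqref{eq: switch_qperp_bound}, this gives $\mathbb E[|V(\q)|] < \infty$ uniformly in $\epsilon$, so the stationary drift equation is legitimate. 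The pivotal structural observation I would then exploit is that every schedule $\s \in \mathcal X$ is a perfect matching, i.e.\ $\mathbf B^T\s = \mathbf 1_{2n}$, so for any $\boldsymbol\theta \in \mathcal S$,
\[
\langle\boldsymbol\theta,\s\rangle \;=\; \mathbf w^T\mathbf B^T\s \;=\; \langle\mathbf w,\mathbf 1_{2n}\rangle \;=\; \tfrac{1}{n}\langle\boldsymbol\theta,\mathbf 1_{n^2}\rangle,
\]
a constant independent of $\s$; the identical computation for $\boldsymbol\nu \in \mathcal F$ yields $\langle\boldsymbol\theta,\boldsymbol\lambda\rangle = (1-\epsilon)\tfrac{1}{n}\langle\boldsymbol\theta,\mathbf 1_{n^2}\rangle$.

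Substituting the queue recursion into the drift equation produces $\mathbb E[e^{\epsilon\langle\boldsymbol\theta,\q\rangle}(e^{\epsilon Y}-1)]=0$ with $Y=\langle\boldsymbol\theta,\ar-\s+\un\rangle$ uniformly bounded, and I would Taylor-expand $e^{\epsilon Y}-1 = \epsilon Y + \tfrac12\epsilon^2 Y^2 + O(\epsilon^3)$. Dividing by $\epsilon$, the first-order $\ar-\s$ piece, using the identity above together with independence of $\ar(t)$ from $\q(t)$, becomes
\[
\frac{1}{\epsilon}\mathbb E\!\left[e^{\epsilon\langle\boldsymbol\theta,\q\rangle}\langle\boldsymbol\theta,\ar-\s\rangle\right] = -\tfrac{1}{n}\langle\boldsymbol\theta,\mathbf 1_{n^2}\rangle\,\mathbb E\!\left[e^{\epsilon\langle\boldsymbol\theta,\q\rangle}\right],
\]
while the $\un$-piece supplies $\langle\boldsymbol\theta,\mathbf M(\boldsymbol\theta)\rangle$ in the limit directly from the definition of $M_k(\boldsymbol\theta)$. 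For the second-order term I would decompose $Y = \tilde Y + \langle\boldsymbol\theta,\un\rangle$ with $\tilde Y = \langle\boldsymbol\theta,\ar\rangle - \tfrac{1}{n}\langle\boldsymbol\theta,\mathbf 1_{n^2}\rangle$; independence then yields $\mathbb E[\tilde Y^2\, e^{\epsilon\langle\boldsymbol\theta,\q\rangle}] = \bigl(\langle\boldsymbol\theta,\boldsymbol\sigma^2\boldsymbol\theta\rangle + O(\epsilon^2)\bigr)\mathbb E[e^{\epsilon\langle\boldsymbol\theta,\q\rangle}]$, whereas the cross term $2\tilde Y\langle\boldsymbol\theta,\un\rangle$ and the pure $\langle\boldsymbol\theta,\un\rangle^2$ piece contribute only $O(1)$ inside the expectation and therefore vanish after multiplication by the $\tfrac\epsilon2$ prefactor. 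Sending $\epsilon\to 0$ collects all surviving terms into
\[
-\tfrac1n\langle\boldsymbol\theta,\mathbf 1_{n^2}\rangle L(\boldsymbol\theta) + \langle\boldsymbol\theta,\mathbf M(\boldsymbol\theta)\rangle + \tfrac12\langle\boldsymbol\theta,\boldsymbol\sigma^2\boldsymbol\theta\rangle L(\boldsymbol\theta) = 0,
\]
which is \eqref{eq: switch_functional_eq} after rearrangement.

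The algebra above is essentially forced; the hard part will be the analytic justification. The main obstacles I anticipate are (a) passing $\epsilon\to 0$ through each expectation, where dominated convergence must be supplied by combining $|V(\q)|\le 1$ on the cone component with the uniform exponential-moment bound on $\|\q_{\perp\mathcal K}\|$ in \eqref{eq: switch_qperp_bound}; and (b) controlling the $O(\epsilon^2)$ Taylor remainder uniformly in $\epsilon$, which follows because $\ar,\s,\un$ all have uniformly bounded entries. A subtler point worth emphasizing is that the unused-service term contributes a \emph{finite} limit $\langle\boldsymbol\theta,\mathbf M(\boldsymbol\theta)\rangle$ only after normalization by $1/\epsilon$; this reflects the elementary heavy-traffic identity $\langle\mathbf 1_{n^2},\mathbb E[\un]\rangle = n\epsilon$, which is built into the definition of $M_k(\boldsymbol\theta)$ in the statement.
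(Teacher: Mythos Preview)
Your approach is essentially the paper's: set the stationary drift of $e^{\epsilon\langle\boldsymbol\theta,\q\rangle}$ to zero, Taylor-expand, and use the key identity that $\langle\boldsymbol\theta,\s\rangle$ is a constant for $\boldsymbol\theta\in\mathcal S$. The well-posedness check and the first-order computations are fine.

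There is, however, a bookkeeping error in your treatment of the second-order cross terms. You divide by $\epsilon^2$ overall (this is implicit in the fact that both $\tfrac1\epsilon\mathbb E[e^{\epsilon\langle\boldsymbol\theta,\q\rangle}\langle\boldsymbol\theta,\ar-\s\rangle]$ and $\tfrac1\epsilon\mathbb E[e^{\epsilon\langle\boldsymbol\theta,\q\rangle}\langle\boldsymbol\theta,\un\rangle]$ appear), so the second-order term carries a prefactor $\tfrac12$, not $\tfrac\epsilon2$. Saying the cross term $2\tilde Y\langle\boldsymbol\theta,\un\rangle$ and the pure $\langle\boldsymbol\theta,\un\rangle^2$ term ``contribute only $O(1)$ inside the expectation and therefore vanish after multiplication by the $\tfrac\epsilon2$ prefactor'' is therefore not a valid justification. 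What actually makes these terms vanish is that $u_k\in\{0,1\}$ and $\mathbb E[u_k]\le\epsilon$: by Cauchy--Schwarz,
\[
\bigl|\mathbb E\bigl[e^{\epsilon\langle\boldsymbol\theta,\q\rangle}\langle\boldsymbol\theta,\un\rangle^2\bigr]\bigr|
\le \mathbb E\bigl[|e^{2\epsilon\langle\boldsymbol\theta,\q\rangle}|\bigr]^{1/2}\,
\mathbb E\bigl[|\langle\boldsymbol\theta,\un\rangle|^4\bigr]^{1/2}
= O(\sqrt\epsilon),
\]
and similarly for the cross term. The paper sidesteps this issue entirely by writing the zero-drift identity as
\[
\mathbb E\bigl[e^{\epsilon\langle\boldsymbol\theta,\q^+\rangle}\bigl(e^{-\epsilon\langle\boldsymbol\theta,\un\rangle}-1\bigr)\bigr]
=\mathbb E\bigl[e^{\epsilon\langle\boldsymbol\theta,\q\rangle}\bigr]\bigl(\mathbb E\bigl[e^{\epsilon\langle\boldsymbol\theta,\ar-\s\rangle}\bigr]-1\bigr),
\]
which separates $\un$ from $\ar-\s$ \emph{before} Taylor-expanding and so produces no cross terms at all. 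Either route works once you supply the correct Cauchy--Schwarz estimate; your final equation is right.
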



The functional equation (Eq. \eqref{eq: switch_functional_eq} in Theorem \ref{thm: switch_functional_eq}) is mathematical relationship between the term $L(\boldsymbol \theta)$, which is the Laplace transform of the limiting heavy traffic distribution and the terms $M_k(\boldsymbol{\theta})$, which intuitively denote the Laplace transform under the condition $q_k=0$ (since $u_k =1$ implies $q_k =0$). Further, the set $\boldsymbol \Theta$ is appropriately chosen such that the quantities $L(\boldsymbol \theta)$ and $M_k(\boldsymbol{\theta})$'s are well defined. 

The steps to establish the functional equation for Input-queued switch is very similar to that of the Three-queue system. 
The first step is to use the complex exponential as the Lyapunov function and equate its expected drift to zero in steady-state.
After that, we use the second-order approximation of the complex exponential in terms of the heavy traffic parameter $\epsilon$ and eliminate the higher order terms to get the functional equation. Here, SSC plays a key role in the mathematical analysis. To be more specific, due to the SSC, 
we only have to consider $\q_{\|}$ (since $\langle \boldsymbol \theta, \q \rangle = \langle \boldsymbol \theta, \q_{\|} \rangle$ for any $\TT\in \mathcal S$), which leads to a lot of technical simplicity.
This allows us to perform the analysis to characterize the heavy traffic distribution using the functional equation.  The complete proof for Theorem \ref{thm: switch_functional_eq} is presented in Appendix \ref{app: switch_functional_eq}.

Similar to that for the Three-queue system, we define $\mathcal{L}(\boldsymbol \Theta)$ to be class of functions such that 
\begin{align}
\label{eq: switch_analytic_functions}
 \mathcal{L}(\boldsymbol \Theta) =  \{f: \boldsymbol \Theta \rightarrow \mathbb C \ : f \text{ is non-zero, holomorphic, continuous and bounded over }   \boldsymbol \Theta \}.
\end{align}

Then, we have the following lemma.

\begin{lemma}
\label{lem: switch_analytic}
    Consider the Input-queued switch as defined in Section \ref{sec: switch_model} operating under a given scheduling algorithm that achieves state space collapse according to Definition \ref{def: switch_ssc}. Then, the functions $(L(\boldsymbol \theta), \mathbf M(\boldsymbol \theta))$ as defined in Theorem \ref{thm: switch_functional_eq} lie in the set $\mathcal{L}(\boldsymbol \Theta)$.
\end{lemma}

Lemma \ref{lem: switch_analytic} is same as Lemma \ref{lem: 3q_analytic} for the case of Three-queue system and uses similar arguments as in the proof of Lemma \ref{lem: 3q_analytic}. The implication of Lemma \ref{lem: switch_analytic} for Input-queued switch is same as that in the case of Lemma \ref{lem: 3q_analytic} for Three-queue system.
For Input-queued switch, we conjecture that the functional equation has a unique solution as given below.

\begin{conjecture}
\label{lem: switch_uniqueness}
Consider the Input-queued switch as defined in Section \ref{sec: switch_model}
operating under a given scheduling algorithm that achieves state space collapse according to Definition \ref{def: switch_ssc}. 
Then, there is a unique set of functions $(L(\boldsymbol \theta), \mathbf M(\boldsymbol \theta))$ such that $L(\boldsymbol \theta)$ and $, M_k(\boldsymbol \theta)$'s lie in $\mathcal{L}(\boldsymbol \Theta)$ and satisfies the functional equation $\mathcal{P}(\TT) =0$ given in Eq. \eqref{eq: switch_functional_eq} for all $\TT\in \boldsymbol \Theta$. 
\end{conjecture}

A major challenge in solving the implicit functional equation given in Eq. \eqref{eq: switch_functional_eq} is proving that the functional equation has a unique solution. For simpler systems such as the Three-queue system or \nsys (see Section \ref{sec: n_sys}) where the SSC happens to a two-dimensional subspace, one can prove that the corresponding functional equation has a unique solution using the theory of Carleman boundary value problem \cite{litvinchuk1970generalized}. Extending that result to a functional equation with more than two variables, such as the case for the Input-queued switch, is open. 
Next, we present Theorem \ref{thm: switch_dist}, which assumes that the Conjecture \ref{lem: switch_uniqueness} holds and provide a solution to the functional equation for Input-queued switch under symmetric variance condition.


\begin{theorem}
\label{thm: switch_dist}
Consider the Input-queued switch as defined in Section \ref{sec: switch_model}
operating under a given scheduling algorithm that achieves state space collapse according to Definition \ref{def: switch_ssc}. Assume Conjecture \ref{lem: switch_uniqueness} holds. Suppose the variance vector $\boldsymbol \sigma^2$ is symmetric, i.e., $\boldsymbol \sigma^2 =\sigma^2 \mathbf{I}_{n^2} $. Then, the heavy traffic steady state queue length vector is given by
  \begin{equation*}
     \epsilon q_{i+n(j-1)} \stackrel{d}{\rightarrow}  \Upsilon_i + \Upsilon_{n+j} - 2 \Tilde{\Upsilon}, \ \ \forall i,j \in \{1,\dots,n\},
 \end{equation*}
where $\{\Upsilon_1,\dots,\Upsilon_{2n}\}$ are independent exponential random variable with mean $\frac{\sigma^2}{2}$ and $\Tilde{\Upsilon}  =\displaystyle \min_{1\leq k\leq 2n}  \Upsilon_k$. In vector notations, we have
 \begin{equation*}
     \epsilon \q \stackrel{d}{\rightarrow} \mathbf  B ( \boldsymbol \Upsilon - \Tilde{\Upsilon}  \mathbf 1_{2n}),
 \end{equation*}
 where $\boldsymbol \Upsilon $ is the vector $ \boldsymbol \Upsilon = (\Upsilon_1,\dots,\Upsilon_{2n})$.
\end{theorem}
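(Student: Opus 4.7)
The strategy is guess-and-verify under the uniqueness conjecture. I compute the Laplace transform $L^{\star}$ of the candidate vector $\mathbf{q}^{\star} = \mathbf{B}(\boldsymbol{\Upsilon} - \tilde{\Upsilon}\mathbf{1}_{2n})$, exhibit a matching $\mathbf{M}^{\star}(\boldsymbol{\theta})$ such that the pair $(L^{\star}, \mathbf{M}^{\star})$ satisfies Eq.~\eqref{eq: switch_functional_eq}, and invoke Conjecture \ref{lem: switch_uniqueness} to conclude $L = L^{\star}$. Uniqueness of the Laplace transform then yields the stated distributional identity.

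The first step is the closed-form computation of $L^{\star}(\boldsymbol{\theta}) = \mathbb{E}[e^{\langle \boldsymbol{\theta}, \mathbf{q}^{\star}\rangle}]$ for $\boldsymbol{\theta} \in \Theta$. Setting $\boldsymbol{\gamma} = \mathbf{B}^T \boldsymbol{\theta} \in \mathbb{C}^{2n}$, the identity $\langle \boldsymbol{\theta}, \mathbf{B}\mathbf{r}\rangle = \langle \boldsymbol{\gamma}, \mathbf{r}\rangle$ for real $\mathbf{r}$ gives $L^{\star}(\boldsymbol{\theta}) = \mathbb{E}[e^{\langle \boldsymbol{\gamma}, \boldsymbol{\Upsilon} - \tilde{\Upsilon}\mathbf{1}_{2n}\rangle}]$. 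Conditioning on the a.s.\ unique argmin $k^{\star}$ of $\boldsymbol{\Upsilon}$ (uniform on $\{1,\dots,2n\}$ by symmetry) and using the memoryless property, conditional on $k^{\star}=k$ the shifted variables $\{\Upsilon_j - \tilde{\Upsilon}\}_{j\neq k}$ are i.i.d.\ exponentials of mean $\sigma^2/2$ and $\Upsilon_k - \tilde{\Upsilon}=0$. The condition $Re(\boldsymbol{\gamma}) \leq \mathbf{0}_{2n}$ in the definition of $\Theta$ ensures the MGFs converge, giving
\[
L^{\star}(\boldsymbol{\theta}) \;=\; \frac{1}{2n}\sum_{k=1}^{2n}\prod_{j\neq k}\frac{1}{1 - (\sigma^2/2)\gamma_j}.
\]

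Next, I would rewrite the LHS of the functional equation as a rational function of $\boldsymbol{\gamma}$. The identity $\mathbf{B}\mathbf{1}_{2n}=2\mathbf{1}_{n^2}$ yields $\langle \boldsymbol{\theta}, \mathbf{1}_{n^2}\rangle = \tfrac{1}{2}\sum_k \gamma_k$, and under symmetric variance $\langle \boldsymbol{\theta}, \boldsymbol{\sigma}^2 \boldsymbol{\theta}\rangle = \sigma^2 \langle \boldsymbol{\eta}, \boldsymbol{\gamma}\rangle$ for any $\boldsymbol{\eta}$ with $\boldsymbol{\theta}=\mathbf{B}\boldsymbol{\eta}$ (the null-space freedom is harmless since $\mathbf{B}[\mathbf{1}_n;-\mathbf{1}_n]=\mathbf{0}_{n^2}$). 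The expression $(-\tfrac{1}{n}\langle \boldsymbol{\theta},\mathbf{1}_{n^2}\rangle + \tfrac{1}{2}\langle \boldsymbol{\theta},\boldsymbol{\sigma}^2\boldsymbol{\theta}\rangle)L^{\star}(\boldsymbol{\theta})$ is then a symmetric rational function in the $\gamma_k$. The guess for $M_k^{\star}$ at $k=i+n(j-1)$ is guided by the boundary interpretation ``$u_k=1$ iff $q_k=0$'', which for the candidate forces $\Upsilon_i=\Upsilon_{n+j}=\tilde{\Upsilon}$; this singles out the $k=i$ and $k=n+j$ summands in $L^{\star}$. With this ansatz I would verify the algebraic identity $-\langle \boldsymbol{\theta}, \mathbf{M}^{\star}(\boldsymbol{\theta})\rangle = \mathrm{LHS}$ via partial fractions.

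The main obstacle is precisely this verification: identifying the correct closed form for $\mathbf{M}^{\star}$ and pushing through the rational-function identity. The full symmetry of the switch under independent permutations of input and output ports should reduce the task to a single combinatorial identity among $2n$ rational functions, manageable via partial-fraction expansion of $L^{\star}$. Once $(L^{\star},\mathbf{M}^{\star})$ is shown to solve Eq.~\eqref{eq: switch_functional_eq}, Conjecture \ref{lem: switch_uniqueness} forces $L=L^{\star}$ on $\Theta$, and inverting the Laplace transform yields the claimed joint distribution for $\lim_{\epsilon \to 0}\epsilon \mathbf{q}$.
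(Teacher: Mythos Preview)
Your proposal is correct and follows essentially the same route as the paper: compute the Laplace transform of the candidate by conditioning on the argmin and using memorylessness, guess a compatible $\mathbf{M}^{\star}$, verify the functional equation, and invoke the uniqueness conjecture. The paper's proof is organized identically; the only difference is that it writes everything in the $\boldsymbol{\phi}$-coordinates via $\boldsymbol{\theta}=\mathbf{B}\boldsymbol{\phi}$ (so your $\gamma_k$ is its $\langle\boldsymbol{\phi},\mathbf{d}_k\rangle$), simplifies your sum for $L^{\star}$ to the closed form
\[
L^{\star}=\frac{1-\tfrac{\sigma^2}{2}\langle\boldsymbol{\phi},\mathbf{1}_{2n}\rangle}{\prod_{k}\bigl(1-\tfrac{\sigma^2}{2}\gamma_k\bigr)},
\]
and writes down the explicit boundary guess
\[
M^{\star}_{i+n(j-1)}=\frac{\bigl(1-\tfrac{\sigma^2}{2}\gamma_i\bigr)\bigl(1-\tfrac{\sigma^2}{2}\gamma_{n+j}\bigr)}{n\prod_{k}\bigl(1-\tfrac{\sigma^2}{2}\gamma_k\bigr)}.
\]
Note that the numerator is the \emph{product} of the two distinguished linear factors (not the sum of the two summands of $L^{\star}$ you were gesturing at), so your heuristic for $M^{\star}$ needs a small correction. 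With this choice, the verification you flagged as the main obstacle collapses after dividing through by the common denominator and using $\langle\boldsymbol{\theta},\boldsymbol{\sigma}^2\boldsymbol{\theta}\rangle=\sigma^2\|\boldsymbol{\theta}\|^2$: the paper shows that both sides equal $\bigl(1-\tfrac{\sigma^2}{2}\langle\boldsymbol{\phi},\mathbf{1}_{2n}\rangle\bigr)\bigl(\langle\boldsymbol{\phi},\mathbf{1}_{2n}\rangle-\tfrac{\sigma^2}{2}\|\boldsymbol{\theta}\|^2\bigr)$ via an elementary row/column split of $\sum_{i,j}(\phi_i+\phi_{n+j})(1-\tfrac{\sigma^2}{2}\gamma_i)(1-\tfrac{\sigma^2}{2}\gamma_{n+j})$, so no heavy partial-fraction machinery is needed.
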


Theorem \ref{thm: switch_dist} completely characterizes the heavy traffic distribution of the Input-queued switch under the symmetric variance condition, where the heavy traffic distribution is represented using $2n$ independent exponential random variable. 
The key idea behind the proof is to show that the Laplace transform of the limiting distribution provided in the Theorem \ref{thm: switch_dist} is a solution of a functional equation (Eq. \eqref{eq: switch_functional_eq}) given in Theorem \ref{thm: switch_functional_eq} when the variances of the arrival process are symmetric. And under the assumption that the functional equation has a unique solution claimed by Conjecture \ref{lem: switch_uniqueness}, the solution provided in Theorem \ref{thm: switch_dist} is the unique solution for the heavy traffic distribution for Input-queued switch under symmetric variance condition. The mathematical proof of Theorem \ref{thm: switch_dist} is provided in Appendix \ref{app: switch_dist}. 

\begin{remark}
\label{rem: switch_memoryless}
As $\{\Upsilon_1,\dots,\Upsilon_{2n}\}$ are identically distributed, we have that $\mathbb P(\Tilde{\Upsilon} = \Upsilon_k) = \frac{1}{2n}$, for all $k\in \{1,2,\dots,2n\}$. Further, by memoryless property of exponential random variables, $[\Upsilon_k-\Tilde{\Upsilon}|\Upsilon_k>\Tilde{\Upsilon}]$ are independent and exponentially distributed with mean $\frac{\sigma^2}{2}$ (or rate $\frac{2}{\sigma^2}$). This implies that in the random vector $\boldsymbol \Upsilon - \Tilde{\Upsilon}  \mathbf 1_{2n}$, exactly one element is zero, and all others are independent and exponentially distributed with rate $\frac{2}{\sigma^2}$.
    For a given queue, say $q_{i+n(j-1)}$, if $\Tilde{\Upsilon} = \Upsilon_i$, we have $\epsilon q_{i+n(j-1)} \stackrel{d}{\rightarrow} [\Upsilon_{n+j}-\Tilde{\Upsilon}|\Upsilon_{n+j}>\Tilde{\Upsilon}] $, which is exponentially distributed with rate $\frac{2}{\sigma^2}$. Same happens when $\Tilde{\Upsilon} = \Upsilon_{n+j}$. And if neither happens, that is, $\Upsilon_{i}>\Tilde{\Upsilon}$ and $\Upsilon_{n+j}>\Tilde{\Upsilon}$, then by the memoryless property, we also get that  $[\Upsilon_{i}-\Tilde{\Upsilon}|\Upsilon_{i}>\Tilde{\Upsilon}] $ and $ [\Upsilon_{n+j}-\Tilde{\Upsilon}|\Upsilon_{n+j}>\Tilde{\Upsilon}]$ are independent of each other in addition to being exponentially distributed. This can be summarized as, 
    \begin{align*}
        \epsilon q_{i+n(j-1)} \stackrel{d}{\rightarrow} \Upsilon_{i}+\Upsilon_{n+j}-2\Tilde{\Upsilon} \sim \begin{cases}
            \text{Exponential with rate $\frac{2}{\sigma^2}$} & w.p. \ \ \frac{1}{n}\\
            \text{Erlang-2 with rate $\frac{2}{\sigma^2}$} & w.p. \ \ 1-\frac{1}{n}.
        \end{cases}
    \end{align*}
\end{remark}

The arguments presented in Remark \ref{rem: switch_memoryless} are used in the proof of Theorem \ref{thm: switch_dist} to compute the Laplace transform of $\mathbf  B ( \boldsymbol \Upsilon - \Tilde{\Upsilon}  \mathbf 1_{2n})$.

\begin{remark}
\label{rem: switch_minterm}
Suppose $\mathbf w$ is the lower dimensional representation of the $\q_{\| \mathcal K}$ (the projection onto the cone $\mathcal{ K}$). For $\epsilon$ small, we have $\epsilon \q \approx \epsilon \q_{\| \mathcal K} = \epsilon \mathbf B \mathbf w$. As mentioned before,  we can ensure that the lower dimensional representation $\mathbf r$ is unique by adding an extra constraint that $\min_{1\leq i\leq 2n} w_i =0$. Such a condition is satisfied by the vector $\boldsymbol \Upsilon - \Tilde{\Upsilon}  \mathbf 1_{2n}$. This provides intuitive reasoning behind the appearance of the term $\Tilde{\Upsilon} $. However, unlike that for the Three-queue system, we cannot claim that the scaled lower dimensional representation $\epsilon\mathbf w \stackrel{d}{\rightarrow} \boldsymbol \Upsilon - \Tilde{\Upsilon}  \mathbf 1_{2n}$, even though $\epsilon \q \stackrel{d}{\rightarrow} \mathbf  B ( \boldsymbol \Upsilon - \Tilde{\Upsilon}  \mathbf 1_{2n})$ as $\mathbf B^T \mathbf B$ is not an invertible matrix. Essentially, there can be other random vectors $\boldsymbol\Upsilon'$ such that $\mathbf B \boldsymbol\Upsilon' = \mathbf  B ( \boldsymbol \Upsilon - \Tilde{\Upsilon}  \mathbf 1_{2n})$. As such, our result in Theorem \ref{thm: switch_dist} does not provide the limiting distribution of the lower dimensional representation, but directly computes the limiting distribution of the projected component.
\end{remark}

\begin{corollary}
    \label{cor: switch_max}
     For the Input-queued switch as defined in Section \ref{sec: switch_model}, MaxWeight scheduling satisfies the functional equation given in Theorem \ref{thm: switch_functional_eq} and the heavy traffic distribution in Theorem \ref{thm: switch_dist}.
\end{corollary}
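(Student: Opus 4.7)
The plan is to observe that both Theorem \ref{thm: switch_functional_eq} and Theorem \ref{thm: switch_dist} are stated uniformly over any scheduling algorithm that achieves state space collapse in the sense of Definition \ref{def: switch_ssc}. Therefore the corollary reduces to a single verification: showing that MaxWeight meets Definition \ref{def: switch_ssc}. Once that verification is in hand, both claims of the corollary follow by direct invocation of the two theorems (the distributional conclusion additionally inheriting the standing assumption that Conjecture \ref{lem: switch_uniqueness} holds).

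First, I would invoke the throughput optimality of MaxWeight established in \cite{665071}, together with the irreducibility and aperiodicity arguments from \cite[Exercise 4.2]{srikant2014communication}, to conclude that the Markov chain $\{\q(t)\}_{t=0}^\infty$ is positive recurrent for every $\boldsymbol\lambda = (1-\epsilon)\boldsymbol\nu$ with $\boldsymbol\nu \in \mathcal F$ and $\epsilon \in (0,1)$. This guarantees the existence of the steady-state distribution under which the expectations $\mathbb E[\cdot]$ appearing in Definition \ref{def: switch_ssc} and in Eq. \eqref{eq: switch_functional_eq} are defined.

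Second, I would cite \cite[Proposition 2]{maguluri2016heavy}, which proves exactly the exponential moment bound demanded by Definition \ref{def: switch_ssc}: for each $\theta \in \mathbb R$ there exists $\epsilon(\theta) > 0$ such that $\mathbb E\!\bigl[e^{\epsilon \theta \|\q_{\perp \mathcal K}\|}\bigr] < C^\star$ for all $0 < \epsilon \leq \epsilon(\theta)$. The argument in \cite{maguluri2016heavy} proceeds by computing the expected drift of the Lyapunov function $\|\q_{\perp \mathcal K}\|$ under the MaxWeight schedule, showing it is negative outside a bounded set, and then applying Hajek's geometric-drift-to-exponential-moment bound from \cite{hajek1982hitting}. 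The companion inequality $\mathbb E\!\bigl[e^{\epsilon \theta \|\q_{\perp}\|}\bigr] \leq \mathbb E\!\bigl[e^{\epsilon \theta \|\q_{\perp \mathcal K}\|}\bigr]$ is immediate from the pointwise inequality $\|\q_{\perp}\| \leq \|\q_{\perp \mathcal K}\|$ noted just below the definition of the cone $\mathcal K$, which in turn follows from $\mathcal K \subset \mathcal S$. This completes the verification of Definition \ref{def: switch_ssc} for MaxWeight.

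Third, with SSC established, Theorem \ref{thm: switch_functional_eq} applies verbatim to the MaxWeight-driven chain, yielding the functional equation \eqref{eq: switch_functional_eq} for the limiting Laplace transform $L(\boldsymbol\theta)$ and the boundary terms $\mathbf M(\boldsymbol\theta)$ on $\Theta$. Under the additional symmetric-variance hypothesis $\boldsymbol\sigma^2 = \sigma^2 \mathbf I_{n^2}$, Theorem \ref{thm: switch_dist} then identifies the limiting queue-length vector with $\mathbf B(\boldsymbol\Upsilon - \tilde\Upsilon \mathbf 1_{2n})$. There is no genuinely new obstacle inside this corollary itself; the substantive difficulty sits upstream, namely in the Hajek-style drift calculation underlying \cite[Proposition 2]{maguluri2016heavy} (which is algorithm-specific and is the step that must be redone if one wishes to replace MaxWeight with another policy satisfying Definition \ref{def: switch_ssc}, as discussed in \cite{jhunjhunwala2021low}).
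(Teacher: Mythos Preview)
Your proposal is correct and follows essentially the same approach as the paper: the paper's own argument is simply to note that MaxWeight achieves state space collapse according to Definition~\ref{def: switch_ssc} (as established in Section~\ref{sec: switch_ssc} via \cite[Proposition~2]{maguluri2016heavy} and \cite{hajek1982hitting}), and then to invoke Theorems~\ref{thm: switch_functional_eq} and~\ref{thm: switch_dist} directly. Your write-up supplies more of the surrounding detail (positive recurrence, the pointwise inequality $\|\q_\perp\|\le\|\q_{\perp\mathcal K}\|$), but the logical skeleton is identical.
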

As mentioned in Section \ref{sec: switch_ssc}, MaxWeight scheduling achieves SSC according to Definition \ref{def: switch_ssc}. Now, Corollary \ref{cor: switch_max} is a direct application  of  Theorem \ref{thm: switch_functional_eq} and Theorem \ref{thm: switch_dist}. Similarly, the class of algorithms presented in \cite{jhunjhunwala2021low} also satisfies the result presented in Theorem \ref{thm: switch_functional_eq} and Theorem \ref{thm: switch_dist}. 

Next, we reiterate the results mentioned in Theorem \ref{thm: switch_dist} for a $2\times 2$ switch to provide more clarity. With slight abuse of notation, we use the matrix notation $q_{ij} = q_{i+n(j-1)}$. Using this notation, the queue length matrix and its corresponding projection is given by,
\begin{align*}
\q = \begin{bmatrix}
        q_{11} & q_{12}\\
        q_{21} &  q_{22} 
    \end{bmatrix},
&& \q_{\|\mathcal K} = 
    \begin{bmatrix}
        w_1 & w_1\\
        w_2 &w_2
    \end{bmatrix}  
    + \begin{bmatrix}
        w_3 & w_4\\
        w_3 &w_4
    \end{bmatrix}.
\end{align*}
From the above representation, we note that $\q_{\|\mathcal K}$ is represented as the sum of two matrices. For the first matrix, row elements are common and for the second matrix, column elements are common. Even though the representation of $\q_{\| \mathcal K}$ involves four elements, $\q_{\| \mathcal K}$ lies in a three-dimensional subspace, and so, the elements $\{w_1,w_2,w_3,w_4\}$ are not uniquely determined for a given vector $\q$. Now, according to Theorem \ref{thm: switch_dist}, in heavy traffic and under symmetric variance condition,
\begin{equation*}
    \epsilon\q \stackrel{d}{\rightarrow} \begin{bmatrix}
                        \Upsilon_1 & \Upsilon_1\\
                        \Upsilon_2 &\Upsilon_2
                \end{bmatrix}  + \begin{bmatrix}
                        \Upsilon_3 & \Upsilon_4\\
                        \Upsilon_3 &\Upsilon_4
                \end{bmatrix} - 2\begin{bmatrix}
                        \Tilde{\Upsilon} & \Tilde{\Upsilon}\\
                        \Tilde{\Upsilon} &\Tilde{\Upsilon}
                \end{bmatrix},
\end{equation*}
where $\Upsilon_1,\Upsilon_2,\Upsilon_3$ and $\Upsilon_4$ are exponential random variables with mean $\sigma^2/2$ and $\Tilde{\Upsilon} = \min\{\Upsilon_1,\Upsilon_2,\Upsilon_3,\Upsilon_4\}$. 

\subsubsection{Comparison of Input-queued switch and Three-queue system}
Three-queue system is a simpler queueing system compared to Input-queued switch, but they are analogous in basic structure. However, there are some distinctions in terms of the heavy traffic behavior of these two systems. The first difference between these two systems is the state space collapse result. For the Three-queue system, the state space collapse occurs to a two-dimensional subspace, and the two dimensional representation of the projection of the queue length vector to the corresponding subspace is unique. In contrast to that, for the Input-queued switch of size $n$, the state space collapse occurs to a subspace of dimension $2n-1$, and the projection of the queue length vector to its corresponding subspace does not have a unique lower dimensional representation. As mentioned in Remark \ref{rem: switch_minterm}, this `non-unique representation' in the case of Input-queued switch is, intuitively, the reason behind an additional $\Tilde{\Upsilon}$ term for Input-queued switch as seen in Theorem \ref{thm: switch_dist}.

Another difference is in terms of the uniqueness of the functional equation of the two systems. The functional equation for the Three-queue system involves only two variables, and so the functional equation for the Three-queue system has a unique solution as shown by Lemma \ref{lem: 3q_uniqueness}. As opposed to that, the functional equation for the Input-queued switch has more than two variables. Currently, we do not have a proof for the uniqueness of the solution for the functional equation for the Input-queued switch. Finally, the third difference between these two systems is in terms of the condition on the variances of the arrival process under which the heavy traffic distribution can be represented by using independent exponential random variables. For Three-queue system, the variance condition in Theorem \ref{thm: 3q_dist} (i.e., $2\sigma_1^2 = \sigma_2^2 + \sigma_3^2$) is more general than the symmetric variance condition (i.e.,  $\boldsymbol \sigma^2 = \sigma^2 \mathbf I_{n^2}$) for Input-queued switch as considered in Theorem \ref{thm: switch_dist}. 






\section{$\mathcal{N}$-System}
\label{sec: n_sys}
We study a parallel server system consisting of two queues (each representing a different job class) and two servers. The first server can only serve jobs in the first job class, while the second server can serve jobs in both job classes. This system is commonly known as $\mathcal{N}$-system. We assume that the service time of any job is server-dependent and not class-dependent, i.e., the mean service time depends only on the server that serves the job. 
$\mathcal{N}$-system has been heavily studied under the CRP condition, where only one of the two queues are loaded close to the capacity. This assumption leads to a comparatively simple mathematical analysis. In our paper, we study the $\mathcal{N}$-system under a regime in which CRP condition is not satisfied, i.e., both the queues are simultaneously working close to the capacity. Also, for the sake of completeness, we provide the limiting queue length distribution for the $\mathcal{N}$-system under both CRP and non-CRP condition.

In Section \ref{sec: n_sys_model}, we provide the model for the $\mathcal{N}$-system operating in a continuous-time fashion. Section \ref{sec: nsys_ssc} gives the state space collapse result for the $\mathcal{N}$-system. Finally, the results, including the functional equation and limiting distribution for $\mathcal{N}$-system is provided in Section \ref{sec: n_sys_dist}.

\subsection{Model for \nsys}
\label{sec: n_sys_model}

We consider a continuous time $\mathcal{N}$-system with two queues given by $Q_1$ and $Q_2$, each denoting a different job class. The corresponding queue lengths, at time $t$, are denoted by $q_1(t)$ and $q_2(t)$. The arrival process of jobs for $Q_1$ and $Q_2$ are independent Poisson processes with rates $\lambda_1$ and $\lambda_2$, respectively. The two servers in the system are denoted by $S_1$ and $S_2$. 
The jobs in $Q_1$ can be served only by $S_1$, while the jobs in the $Q_2$ are more flexible and can be served by either $S_1$ or $S_2$. Thus, server $S_1$ can serve jobs from either of the queues, but it cannot serve both queues simultaneously. As a result, there are two possible service configurations. The first one is that $S_1$ serves $Q_1$ and $S_2$ serves $Q_2$, and the second configuration being $S_1$ and $S_2$ both serve $Q_2$. The processing times of jobs on $S_1$ and $S_2$ are exponentially distributed with parameters $\mu_1$ and $\mu_2$, respectively, and are independent of the job beign served. The model for \nsys is pictorially depicted in Fig. \ref{n_system}.

\begin{figure}[h!]
\centering
\begin{tikzpicture}[>=latex]
\draw (0,0) -- ++(2cm,0) -- ++(0,-0.5cm) -- ++(-2cm,0);
\draw (0,1) -- ++(2cm,0) -- ++(0,-0.5cm) -- ++(-2cm,0);
\draw (4,-0.25cm) circle [radius=0.3cm];
\draw (4,0.75cm) circle [radius=0.3cm];
\draw[->] (2.25,-0.25) -- +(35pt,0);
\draw[->] (2.25,-0.15) -- +(35pt,0.8);
\draw[<-] (0,-0.25) -- +(-20pt,0) node[left] {$\lambda_2 $};
\node at (4,-0.25cm) {$\mu_2$};
\draw[->] (2.25,0.75) -- +(35pt,0);
\draw[<-] (0,0.75) -- +(-20pt,0) node[left] {$\lambda_1 $};
\node at (4,0.75cm) {$\mu_1$};
\end{tikzpicture}
\caption{The model for $\mathcal{N}$-system}
\label{n_system}
\end{figure}
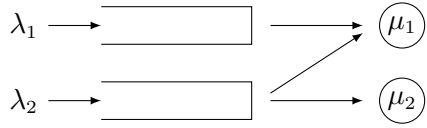

A scheduling policy is an underlying rule by which the system chooses the service configuration to use in any time slot. In this paper, we only consider the scheduling algorithms for which the process $\{\q(t)\}_{t=0}^\infty$ forms an irreducible and aperiodic Markov chain, where $\q(t) = (q_1(t),q_2(t))$. The scheduling policy is said to be \textit{stable} if the Markov chain $\{\q(t)\}_{t=0}^\infty$ is positive recurrent. 
The \textit{capacity region} $\mathcal{C}$ is the set of arrival rate vector $\boldsymbol \lambda = (\lambda_1,\lambda_2)$ for which there exists a scheduling policy such the system is stable.  The capacity region of $\mathcal{N}$-system is given by 
\begin{equation*}
    \mathcal{C} = \{\boldsymbol \lambda \in \mathbb{R}^2_+ : \lambda_1 + \lambda_2 < \mu_1 + \mu_2, \lambda_1< \mu_1 \}.
\end{equation*}
We define $\mathcal F$ to be a part of the boundary of the capacity region $\mathcal C$, given by $\mathcal F = \mathcal{F}_1 \cup \mathcal{F}_2 \cup \mathcal{F}_3 $, where
\begin{align*}
    \mathcal{F}_1 &= \{\boldsymbol \nu \in \mathbb{R}^2_+ : \nu_1 + \nu_2 = \mu_1 + \mu_2, \nu_1< \mu_1 \},\\
    \mathcal{F}_2 &= \{\boldsymbol \nu \in \mathbb{R}^2_+ : \nu_1 + \nu_2 < \mu_1 + \mu_2, \nu_1= \mu_1 \}, \\
    \mathcal{F}_3 &= \{\boldsymbol \nu \in \mathbb{R}^2_+ : \nu_1 = \mu_1 , \nu_2= \mu_2 \}.
\end{align*}
The system operates in heavy traffic if the arrival rate $\boldsymbol \lambda$ is very close to a vector $\boldsymbol \nu \in \mathcal{F}$ i.e., there exists   $\boldsymbol \nu \in \mathcal{F}$, such that
\begin{align}
\label{eq: n_sys_arrival_vector}
    \lambda_1 = (1-\epsilon)\nu_1, && \lambda_1 + \lambda_2 =(1-\gamma \epsilon)( \nu_1 + \nu_2),
\end{align}
where $\epsilon$ is the heavy traffic parameter.
The parameter $\gamma>0$ defines the direction of approach of the arrival rate vector $\boldsymbol \lambda$ to the point $(\nu_1,\nu_2)$. This gives us that $\lambda_2 = (1-\gamma\epsilon) \nu_2 + \epsilon \nu_1 (1-\gamma)$. 
In this paper, we assume that the direction parameter $\gamma$ is independent of the heavy traffic parameter $\epsilon$, which means that $\boldsymbol\lambda $ approaches the vector $\boldsymbol\nu \in \mathcal{F}$ along a straight line, the slope of which depends on the parameter $\gamma$. 
\begin{remark}
    For $\boldsymbol \nu \in \mathcal{F}_1$, we have $\lambda_1+ \lambda_2 \rightarrow \mu_1+\mu_2$ and $\lambda_1 \rightarrow \nu_1 <\mu_1$ as $\epsilon \rightarrow 0$. Similarly, for $\boldsymbol \nu \in \mathcal{F}_2$, $\lambda_1 \rightarrow \mu_1$ and $\lambda_1+ \lambda_2 \rightarrow \nu_1+\nu_2 < \mu_1+\mu_2$ as $\epsilon \rightarrow 0$. In both the cases, the corresponding \nsys satisfies the CRP condition. However, when $\boldsymbol \lambda$ approaches a vector in  $\mathcal{F}_3$, that is,  $\boldsymbol \lambda \rightarrow (\mu_1,\mu_2)$, the CRP condition is not satisfied. 
\end{remark}

For $\mathcal{N}$-system, a scheduling algorithm corresponds to a policy that server $S_1$ uses to choose whether to serve the queue $Q_1$ or $Q_2$. In this work, for $\mathcal{N}$-system, we only consider the MaxWeight scheduling policy.
Under the MaxWeight scheduling policy, $S_1$ chooses the queue with higher queue length (with priority to $Q_2$ when tied), that is, if $q_2(t) \geq q_1(t)$, then both $S_1 $ and $S_2$ serve the jobs in the queue $Q_2$,  and if $   q_2(t) < q_1(t)$, $S_1$ serves $Q_1$ and $S_2$ serves $Q_2$. We assume that the jobs are served preemptively. Under MaxWeight, the queue length process $\{\mathbf{q}(t)\}_{t\geq0}$ forms a continuous time Markov chain, where $\mathbf{q}(t) = (q_1(t),q_2(t))$.  We use $G(\cdot, \cdot)$ to denote the generator matrix of the corresponding CTMC. We use $\pi_\epsilon$ to denote the steady-state distribution of the queue length vector. Also, we use $\q =(q_1,q_2)$ to denote the queue length vector that follows the steady-state distribution $\pi_\epsilon$.

\subsection{State-space collapse in the $\mathcal{N}$-system}
\label{sec: nsys_ssc}
In this section, we provide the definition of SSC for the $\mathcal{N}$-system under the MaxWeight scheduling policy. Consider the cone $\mathcal{K}_1$, $\mathcal{K}_2$ and $\mathcal{K}_3$ to be 
\begin{align*}
    \mathcal{K}_1 = \left\{ \mathbf{x}\in \mathbb{R}^2_+ : x_2 = x_1 \right\},&&\mathcal{K}_2 = \left\{ \mathbf{x}\in \mathbb{R}^2_+ : x_2 = 0 \right\}, &&\mathcal{K}_3 = \left\{ \mathbf{x}\in \mathbb{R}^2_+ : x_2 \leq x_1 \right\}.
\end{align*}
Note that the affine dimension of the cone $\mathcal{K}_1$ and $\mathcal{K}_2$ is just, while for the cone $\mathcal{K}_3$, the affine dimension is two. For any vector $\mathbf{y} \in \mathbb{R}^2_+$, let $\mathbf y_{\| \mathcal{K}_i}$ denotes the projection of vector $\mathbf y$ onto the cone $\mathcal K_i$ for $i \in \{1,2,3\}$. Then, 
\begin{align*}
    \mathbf y_{\| \mathcal{K}_1} = \frac{y_1+y_2}{2} \begin{bmatrix} 1\\1 \end{bmatrix}, && \mathbf y_{\| \mathcal{K}_2} = \begin{bmatrix} y_1\\0 \end{bmatrix}, &&  \mathbf{y}_{\| \mathcal{K}_3} =  \mathbf{y} \mathbf{1}_{\{y_2 \leq y_1\}} + \frac{y_1+y_2}{2} \begin{bmatrix} 1\\1 \end{bmatrix} \mathbf{1}_{\{y_2 > y_1\}}.
\end{align*}
Now, the perpendicular component $\mathbf{y}_{\perp \mathcal{K}_i} = \mathbf y - \mathbf y_{\| \mathcal{K}_i} $ for $i\in \{1,2,3\}$ is given by
\begin{align*}
    \mathbf y_{\perp \mathcal{K}_1} = \frac{y_2-y_1}{2} \begin{bmatrix} -1\\1 \end{bmatrix}, && \mathbf y_{\perp \mathcal{K}_2} = \begin{bmatrix} 0\\y_2 \end{bmatrix}, &&  \mathbf{y}_{\perp \mathcal{K}_3} =  \frac{y_2-y_1}{2} \begin{bmatrix} -1\\1 \end{bmatrix} \mathbf{1}_{\{y_2 > y_1\}}.
\end{align*}
As the focus of this paper is to characterize the limiting distribution when the system does not satisfy the CRP condition, in Appendix \ref{app: n_sys}, we have provided mathematical details mostly for the non-CRP case, i.e., when $\boldsymbol \lambda \rightarrow \boldsymbol \nu$ where $\boldsymbol\nu\in \mathcal{F}_3$, and so, in Appendix \ref{app: n_sys}, for the sake of convenience, we use the notation $\mathbf y_{\|} = \mathbf y_{\| \mathcal{K}_3} $ and $\mathbf y_{\perp} = \mathbf y_{\perp \mathcal{K}_3}$ for any vector $\mathbf y \in \mathbb R^2_+$.

\begin{proposition}
    \label{prop: n_sys_ssc}
    Consider the \nsys as defined in Section \ref{sec: n_sys_model} operating under the MaxWeight scheduling policy. Pick any $i\in \{1,2,3\}$ and suppose $\boldsymbol \lambda \rightarrow \boldsymbol \nu$ as $\epsilon \rightarrow 0$ according to Eq. \eqref{eq: n_sys_arrival_vector}, where $\boldsymbol \nu \in \mathcal{F}_i$. 
    The MaxWeight scheduling policy achieves \textit{State Space Collapse} (SSC) onto the cone $\mathcal{K}_i$, that is,  there exists $\epsilon_0,\theta_0 >0$, such that for every $\theta <\theta_0$ and $\epsilon<\epsilon_0$, the steady-state queue length vector satisfies,
    \begin{equation*}
    \mathbb{E}_{\pi_\epsilon}[e^{ \theta  \|\q_{\perp \mathcal{K}_i} \| }] < C^\star < \infty,
    \end{equation*}
    where $\q_{\perp \mathcal{K}_i} = \q -\q_{\| \mathcal{K}_i} $. 
    As a consequence, for every $\theta > 0$, $\lim_{\epsilon \rightarrow 0} \mathbb{E}[e^{\epsilon \theta  \|\q_{\perp \mathcal{K}_i} \| }]  < \infty.$ Further, for every non-negative integer $r$, there  exists a $C_r$ independent of $\epsilon$ such that \[\mathbb{E} \big [\|\q_{\perp \mathcal{K}_i} \|^r \big] \leq C_r.\]
\end{proposition}

From Proposition \ref{prop: n_sys_ssc}, if $\boldsymbol \lambda $ approaches a vector $ \boldsymbol \nu \in \mathcal{F}_1$ or $ \boldsymbol \nu \in \mathcal{F}_2$  as $\epsilon \rightarrow 0$, then the SSC happens to a one-dimensional cone, as $\mathcal{K}_1$ and $\mathcal{K}_2$ are one-dimensional. This means that the system satisfies the CRP condition. If $\boldsymbol \lambda \rightarrow \boldsymbol \nu$ as $\epsilon \rightarrow 0$, where $\boldsymbol \nu \in \mathcal{F}_3$, the state space of the queue length vector does not collapse to a lower dimensional subspace, as $\mathcal{ K}_3$ is two-dimensional. However, the size of the state space reduces as $\mathcal{K}_3 \subsetneq \mathbb R^2_+$. 
The proof of SSC for MaxWeight when $\boldsymbol \lambda $ approaches a vector $ \boldsymbol \nu \in \mathcal{F}_3$ is provided in Appendix \ref{app: n_sys_ssc}.



\subsection{Results for $\mathcal{N}$-System}
\label{sec: n_sys_dist}
In this section, we present the functional equation (in Theorem \ref{thm: n_sys_mgf_eq}) that the limiting distribution for the $\mathcal{N}$-system satisfies under the non-CRP condition, that is, $\boldsymbol \lambda \rightarrow \boldsymbol (\mu_1,\mu_2)$. In Lemma \ref{lem: n_sys_uniqueness}, we prove that the solution to the functional equation is unique. Finally, 
We provide the heavy traffic distribution for $\mathcal{N}$-system under MaxWeight scheduling in Theorem \ref{thm: n_sys_distribution}, under  the condition that service rates are symmetric. 
\begin{theorem}
\label{thm: n_sys_mgf_eq}
 Consider the $\mathcal{N}$-system as defined in Section \ref{sec: n_sys_model}, operating under the MaxWeight scheduling policy. Suppose $\boldsymbol \lambda \rightarrow (\mu_1,\mu_2)$.  Let
 $\boldsymbol \Theta = \{\boldsymbol \theta \in\mathbb{C}^2: Re(\theta_1) \leq 0, Re(\theta_1 + \theta_2 )\leq  0\}$.
 Then, for all $\boldsymbol \theta \in \boldsymbol \Theta$, 
 \begin{align}
 \label{eq: n_sys_mgf_eq}
  \mathcal{P}(\TT):=  L(\boldsymbol \theta) \big[ \mu_2 (-\gamma\theta_2  +\theta_2^2)+\theta_2\mu_1(1-\gamma)+ \mu_1 (-\theta_1+\theta_1^2)\big] +\mu_1(\theta_1-\theta_2) M_1(\boldsymbol \theta)  +\theta_2 M_2(\boldsymbol \theta)   =0,
\end{align}
where $L(\boldsymbol \theta) =  \eplim\EEpe \left[e^{\epsilon(\theta_1 q_1 + \theta_2 q_2)}\right]$, 
and 
\begin{align*}
   M_1(\boldsymbol \theta) &= \eplim \frac{1}{\epsilon} \EEpe \left[e^{\epsilon(\theta_1 q_1 + \theta_2 q_2) }\mathbf{1}_{\{q_1\leq q_2\}}\right], \\
   M_2(\boldsymbol \theta) &= \eplim \frac{1}{\epsilon}  \EEpe[e^{\epsilon(\theta_1 q_1 + \theta_2 q_2) }\left(\mu_2\mathbf 1_{\{q_2 = 0\}}+\mu_1\mathbf 1_{\{q_2 =q_1=0\}} \right) ].
\end{align*}
\end{theorem}

Theorem \ref{thm: n_sys_mgf_eq} provides the functional equation for the heavy traffic distribution of \nsys under the non-CRP condition, i.e., when the arrival rate vector $\boldsymbol \lambda$ approaches the point $(\mu_1,\mu_2)$ according to straight line with direction parameter $\gamma$. 


The proof of Theorem \ref{thm: n_sys_mgf_eq} follows by equating the drift of an appropriate Lyapunov function to zero in steady state.
For our analysis, we choose complex exponential as the Lyapunov function. As shown in the proof of Theorem \ref{thm: n_sys_mgf_eq}, $\boldsymbol \theta$ is chosen in such a way that the exponential Lyapunov function for \nsys is well-defined. By equating the drift of complex exponential function to zero in steady state and then by using a second-order approximation in terms of the heavy traffic parameter $\epsilon$, we obtain the functional equation given in Eq. \eqref{eq: n_sys_mgf_eq} for \nsys as $\epsilon \rightarrow 0$. The complete proof for Theorem \ref{thm: n_sys_mgf_eq} is provided in Appendix \ref{app: n_sys_mgf_eq}. 
\begin{remark}
In order to derive the limiting distribution of $\epsilon \q$ for the $\mathcal{N}$-system, it is sufficient to solve for $L(\TT)$ (in Theorem \ref{thm: n_sys_mgf_eq}) for $\TT\in \capTT$, as for $\mathcal{N}$-system, $\{\boldsymbol \theta \in\mathbb{C}^2: Re(\TT) \leq \mathbf 0_2\} \subseteq \capTT.$ As such, for $\mathcal{N}$-system, we do not need an argument as in Proposition \ref{prop: 3q_mgf_convergence} or Proposition \ref{prop: switch_mgf_convergence}.
\end{remark}

Similar to that for the Input-queued switch, we define $\mathcal{L}(\boldsymbol \Theta)$ to be class of functions such that 
\begin{align}
\label{eq: nsys_analytic_functions}
 \mathcal{L}(\boldsymbol \Theta) =  \{f: \boldsymbol \Theta \rightarrow \mathbb C \ : f \text{ is non-zero, holomorphic, continuous and bounded over }   \boldsymbol \Theta \}.
\end{align}

\begin{lemma}
\label{lem: nsys_analytic}
    Consider the $\mathcal{N}$-system as defined in Section \ref{sec: n_sys_model}, operating under the MaxWeight scheduling policy. Then, the functions $(L(\boldsymbol \theta), M_1(\boldsymbol \theta), M_2(\boldsymbol \theta))$ as defined in Theorem \ref{thm: n_sys_mgf_eq} lie in the set $\mathcal{L}(\boldsymbol \Theta)$.
\end{lemma}

Lemma \ref{lem: nsys_analytic} is same as Lemma \ref{lem: 3q_analytic} for the case of Three-queue system. By the definition of the function $(L(\boldsymbol \theta), M_1(\boldsymbol \theta), M_2(\boldsymbol \theta))$, they satisfy some specific properties, i.e. they belong to a certain class of functions given by $\mathcal{L}(\capTT)$. Our next lemma implies that the function equation in Theorem \ref{thm: n_sys_mgf_eq} has a unique solution in the class of functions given by $\mathcal{L}(\capTT)$. This would imply that there is a unique function that describes the Laplace transform of the heavy traffic queue length and also solves the functional equation. The argument for Lemma \ref{lem: nsys_analytic} follows from Lemma \ref{lem: nsys_mgf_equivalence} presented in Appendix \ref{app: n_sys}. 
Next, we claim that there is a unique solution to the functional equation.

\begin{lemma}
\label{lem: n_sys_uniqueness}
Consider the $\mathcal{N}$-system as defined in Section \ref{sec: n_sys_model}, operating under the MaxWeight scheduling policy. Then, there is a unique set of functions $(L(\boldsymbol \theta), M_1(\boldsymbol \theta), M_2(\boldsymbol \theta))$ such that $L(\boldsymbol \theta), M_1(\boldsymbol \theta)$ and $ M_2(\boldsymbol \theta)$ lie in $\mathcal{L}(\boldsymbol \Theta)$ and satisfies the functional equation $\mathcal{P}(\TT) =0, \forall \TT\in \capTT$, given in Eq. \eqref{eq: n_sys_mgf_eq} for all values of $\boldsymbol \theta \in \boldsymbol \Theta$. 
\end{lemma}
A crucial step to solve the functional equation given in  Theorem \ref{thm: n_sys_mgf_eq} is to ensure that it has a unique solution. Then, one can just guess the solution and check that it satisfies the functional equation. 
We exploit this argument to solve the functional equation under the condition $\mu_1=\mu_2$, and derive the result presented in Theorem \ref{thm: n_sys_distribution}.

The proof of Lemma \ref{lem: n_sys_uniqueness} is similar to that of Lemma \ref{lem: 3q_uniqueness}.
We consider a linear transformation of $\TT$ given by $\boldsymbol \psi = (\psi_1,\psi_2) =(\theta_1,\theta_1+\theta_2)$. Then, we show that $M_1(\TT)$ is a function of only $\psi_2$ and $M_2(\TT)$ is a function of only $\psi_1$. This allows us to rewrite the functional equation $\mathcal{P}(\TT) =0, \forall \TT\in \capTT$ in Eq. \eqref{eq: n_sys_mgf_eq} in a form that is consistent with the class of functional equations in Lemma \ref{lem: functional_uniqueness}, that is,  $\mathcal{P}(\TT) = \tilde{ \mathcal{P}}(\boldsymbol \psi) =0, \ \forall \boldsymbol \psi \in \boldsymbol \Psi$, where
\[\boldsymbol\Psi = \{ \boldsymbol \psi \in \mathbb C^2: Re(\boldsymbol \psi ) \leq \mathbf 0_2\}. \]
Afterwards, by using Lemma \ref{lem: functional_uniqueness}, we imply that the functional equation $ \tilde{ \mathcal{P}}(\boldsymbol \psi) =0, \ \forall \boldsymbol \psi \in \boldsymbol \Psi$. This implies that there is a unique $L(\TT)$ that satisfies the functional equation $\mathcal{P}(\TT) =0, \forall \TT\in \capTT$ in Eq. \eqref{eq: 3q_functional_eq} for all values of $\TT\in\capTT$.
The complete proof of Lemma \ref{lem: n_sys_uniqueness} is provided in Appendix \ref{app: n_sys_uniqueness}. 

\begin{theorem}
\label{thm: n_sys_distribution}
Consider the $\mathcal{N}$-system defined in Section \ref{sec: n_sys_model}, operating under the MaxWeight scheduling policy. Suppose $\Upsilon_1$ and $\Upsilon_2$ are independent and exponentially distributed random variables with mean $1$ and  $\frac{1}{2\gamma}$ respectively.  
\begin{enumerate}[label=(\alph*), ref=\ref{thm: n_sys_distribution}.\alph*]
    \item \label{thm: n_sys_distribution_a} If $\boldsymbol \lambda \rightarrow \boldsymbol\nu$ for  $\boldsymbol \nu \in \mathcal{F}_1$ according to Eq. \eqref{eq: n_sys_arrival_vector}, the limiting distribution is given by,
    \begin{align*}
         \epsilon (q_1,q_2) \stackrel{d}{\rightarrow} (\Upsilon_2,\Upsilon_2).
    \end{align*}
    \item \label{thm: n_sys_distribution_b} If $\boldsymbol \lambda \rightarrow \boldsymbol\nu$ for  $\boldsymbol \nu \in \mathcal{F}_2$ according to Eq. \eqref{eq: n_sys_arrival_vector}, the limiting distribution is given by,
    \begin{align*}
        \epsilon (q_1,q_2) \stackrel{d}{\rightarrow}  (\Upsilon_1,0).
    \end{align*}
    \item \label{thm: n_sys_distribution_c} If $\boldsymbol \lambda \rightarrow \boldsymbol\nu $ for $ \boldsymbol \nu \in \mathcal{F}_3$ according to Eq. \eqref{eq: n_sys_arrival_vector}, and suppose $\mu_1 = \mu_2$, the limiting distribution is given by,
    \begin{align*}
     \epsilon (q_1,q_2) \stackrel{d}{\rightarrow}  (\Upsilon_1+\Upsilon_2,\Upsilon_2).
    \end{align*}
\end{enumerate}
\end{theorem}
Theorem \ref{thm: n_sys_distribution} provides the limiting distribution for $\mathcal N$-system in three different cases. When the system satisfies the CRP condition (i.e., SSC happens to a one dimensional cone), as in Theorem \ref{thm: n_sys_distribution_a} and Theorem \ref{thm: n_sys_distribution_b}, the limiting distribution is represented by a single exponential random variable. However, in Theorem \ref{thm: n_sys_distribution_c}, the system does not satisfy the CRP condition, that is, SSC happens to a two dimensional cone $\mathcal{K}_3$. 
If $\mu_1=\mu_2$, we observe that the limiting distribution is given by a linear combination of two independent and exponentially distributed random variables. Note that the functional equation in Theorem \ref{thm: n_sys_mgf_eq} holds for any value of $\mu_1$ and $\mu_2$, while in Theorem \ref{thm: n_sys_distribution_c}, we impose the condition $\mu_1 = \mu_2$. This is because we can easily solve Eq. \eqref{eq: n_sys_mgf_eq} under the condition $\mu_1 = \mu_2$. Finding the analytic solution Eq. \eqref{eq: n_sys_mgf_eq} when $\mu_1\neq \mu_2$ is, in general, quite hard. One approach is presented in \cite{franceschi2019integral}, where the solution can be represented as a Cauchy integral by solving a properly defined boundary value problem.

\begin{remark}
    The boundary sets $\mathcal{F}_1$ and $\mathcal{F}_2$ are line segments and the singleton set $\mathcal{F}_3$ contains the common end point of the line segments $\mathcal{F}_1$ and $\mathcal{F}_2$. Thus, intuitively, if the system approaches the boundary $\mathcal{F}_3$ in heavy traffic, it displays a behaviour which is a combination of the behaviour it displays on the other two boundaries $\mathcal{F}_1$ and $\mathcal{F}_2$. 
\end{remark}

In Theorem \ref{thm: n_sys_distribution_a} and Theorem \ref{thm: n_sys_distribution_b}, that SSC happens to $\mathcal{K}_1$ and $\mathcal{K}_2$, which are one-dimensional cone. Thus, the systems satisfies the CRP condition, and so the proof follows from the arguments provided in \cite{hurtado2020transform}. For the non-CRP case, that is, in Theorem \ref{thm: n_sys_distribution_c},
the proof uses the functional equation provided in Theorem \ref{thm: n_sys_mgf_eq}. Complete proof of Theorem \ref{thm: n_sys_distribution_c} is given in Appendix \ref{app: n_sys_distribution}.

\section{Proof outline for Three-queue system}
\label{sec: 3q_outline}

In this section, we present a brief outline of the proof of the results presented in  Section \ref{sec: 3q_results}. The complete proof for all the results in provided in Appendix \ref{app: 3q}. Note that, for this section, we are using the notations presented in Section \ref{sec: 3q_model}. 
The arguments provided in this section present much of the mathematical complexity required to prove the results mentioned in previous sections while omitting most of the technical details details. 

\subsection{Essential Lemmas}
\label{sec: mgfplusuniqueness}

\begin{lemma}
    \label{lem: laplace_convergence}
    Suppose $\{\mathbf X_n\}_{n\in \mathbb N}  $ is a sequence of random vector in $\mathbb R^d_+$ such that $\mathbf X_n$ follows the distribution $\pi_n$. Let $\tilTT \in \tilde{\boldsymbol \Theta}$, where $\tilde{\boldsymbol \Theta} = \{\tilTT \in \mathbb C^d : Re(\tilTT) \leq \mathbf 0_d\}.$
    If the sequence of Laplace transform $L_n(\tilTT) = \EE_{\pi_n} \big[ e^{\langle \tilTT, \mathbf X_n \rangle}\big]$ converges pointwise to $L(\tilTT)$ for all $\tilTT\in \tilde{\boldsymbol \Theta}$, where $L(\tilTT)$ is the Laplace transform of a random vector $\mathbf X$, then, $\mathbf X_n$ converges weakly to random variable $\mathbf X$, that is
    \[\mathbf X_n \stackrel{d}{\rightarrow} \mathbf X.\]
\end{lemma}

Lemma \ref{lem: laplace_convergence} follows simply by using \cite[Theorem 26.3]{Bill86}.
Next, we present the uniqueness result for a functional equation with two variables.
\begin{lemma}
\label{lem: functional_uniqueness}
Consider a functional equation of the form 
\begin{equation}
    \label{eq: functional}
    \gamma(\boldsymbol \psi) \Phi (\boldsymbol \psi) + \gamma_1(\boldsymbol \psi) \Phi_1 (\psi_2) +\gamma_2(\boldsymbol \psi) \Phi_2 (\psi_1) = 0,
\end{equation}
for all $\boldsymbol \psi \in \boldsymbol\Psi = \{\mathbf x \in \mathbb{C}^2:  Re( \mathbf x ) \leq \mathbf 0_2\}$, where $\Phi (\boldsymbol \psi), \Phi_1 (\psi_2)$ and $\Phi_2 (\psi_1)$ are analytic, continuous and bounded functions over the domain $\boldsymbol\Psi$, and $\gamma(\boldsymbol \psi), \gamma_1(\boldsymbol \psi)$ and $\gamma_2(\boldsymbol \psi)$ are given by
\begin{align*}
    \gamma(\boldsymbol \psi) &= \alpha_1 \psi_1 + \alpha_2 \psi_2 + \frac{1}{2} (\sigma_{11} \psi_1^2 + 2\sigma_{12} \psi_1\psi_2 + \sigma_{22}\psi_2^2),\\
    \gamma_1 (\boldsymbol \psi) &= r_{11} \psi_1 + r_{21}\psi_2,\\
     \gamma_2 (\boldsymbol \psi)  &= r_{12} \psi_1 + r_{22}\psi_2.
\end{align*}
Suppose the following conditions are satisfied,
\begin{align}
\label{eq: funceqconditions}
    r_{11} > 0, && r_{22} >0, && r_{11}r_{22}  -r_{12}r_{21} >0 && r_{22}\alpha_1  -r_{12}\alpha_{2} <0, && r_{11}\alpha_2  -r_{21}\alpha_{1} <0. 
\end{align}
Also, suppose $\Phi(\mathbf 0_2), \Phi_1 (0)$ and $\Phi_2 (0)$ are given. Then, there is a unique solution $(\Phi (\boldsymbol \psi), \Phi_1 (\psi_2),\Phi_2 (\psi_1))$ that satisfies the functional equation given in Eq. \eqref{eq: functional} for all $\boldsymbol \psi \in \boldsymbol \Psi$.
\end{lemma}

The arguments behind the proof of Lemma \ref{lem: functional_uniqueness} is provided in Appendix \ref{app: functional_uniqueness}.



\subsection{Proof outline for Theorem \ref{thm: 3q_functional_eq} and Theorem \ref{thm: 3q_dist}}

\proof{\textit{Proof outline for Theorem \ref{thm: 3q_functional_eq}}.}
Suppose $\capTT^\epsilon \subseteq \capTT$ is the set of $\TT$ such that $\left|\EEpe \Big[ e^{ \epsilon \langle \boldsymbol{\theta}, \mathbf{q} \rangle } \Big]\right| < \infty $ for any $\TT\in \capTT^\epsilon$ (see Lemma \ref{lem: 3q_mgf_equivalence} for more details on $\capTT^\epsilon$). 

\textbf{Step 1:} [Exponential Lyapunov Function] Recall that $\q$ and $\q^+$ satisfies the equation $\q^+ = \q + \mathbf a - \mathbf s + \mathbf u$. In the first step, we convert this into an exponential form as follows.
For any $\TT\in \capTT^\epsilon$,
\begin{align*}
  \EEpe \Big[  e^{ \epsilon \langle \boldsymbol{\theta}, \mathbf{q}^+ \rangle } \Big( e^{- \epsilon \langle \boldsymbol{\theta}, \mathbf{u} \rangle}  -1 \Big) \Big] 
  & = \EEpe \Big[ e^{ \epsilon \langle \boldsymbol{\theta}, \mathbf{q} \rangle } e^{ \epsilon \langle \boldsymbol{\theta}, \mathbf{a }- \mathbf{s } \rangle } \Big] - \EEpe \Big[ e^{ \epsilon \langle \boldsymbol{\theta}, \mathbf{q}^+ \rangle}\Big] \allowdisplaybreaks \nonumber \\
  & \stackrel{(a)}{=} \EEpe \Big[ e^{ \epsilon \langle \boldsymbol{\theta}, \mathbf{q} \rangle } \Big] \EEpe \Big[ e^{ \epsilon \langle \boldsymbol{\theta}, \mathbf{a }- \mathbf{s } \rangle } \Big] - \EEpe \Big[ e^{ \epsilon \langle \boldsymbol{\theta}, \mathbf{q}^+ \rangle}\Big]. \allowdisplaybreaks \nonumber 
\end{align*}
where (a) follows as the arrivals are independent of the queue length vector and $\langle \TT,\s\rangle = \frac{1}{2}\langle \TT, \mathbf 1_3 \rangle $ for all $\TT \in \capTT$, so $\langle \TT,\s\rangle$ is independent of $\q$.

\textbf{Step 2:} [Zero drift] As $\q$ follows the steady state distribution $\pi_\epsilon$,  $\q^{+}$ also follows the steady state distribution $\pi_\epsilon$. Thus, $\EEpe \Big[ e^{ \epsilon \langle \boldsymbol{\theta}, \mathbf{q} \rangle } \Big] = \EEpe \Big[ e^{ \epsilon \langle \boldsymbol{\theta}, \mathbf{q}^+ \rangle } \Big]$. This gives us, 
\begin{align*}
  \EEpe \Big[  e^{ \epsilon \langle \boldsymbol{\theta}, \mathbf{q}^+ \rangle } \Big( e^{- \epsilon \langle \boldsymbol{\theta}, \mathbf{u} \rangle}  -1 \Big) \Big] 
  & = \EEpe \Big[ e^{ \epsilon \langle \boldsymbol{\theta}, \mathbf{q} \rangle } \Big] \Bigg( \EEpe \Big[ e^{ \epsilon \langle \boldsymbol{\theta}, \mathbf{a }- \mathbf{s } \rangle } \Big] - 1\Bigg).
\end{align*}
Also, we have $\eplim \capTT^\epsilon = \capTT$ (again, see Lemma \ref{lem: 3q_mgf_equivalence}), and thus, by taking $\epsilon \rightarrow 0$, for all $\TT\in \capTT$, we have
\begin{align*}
    \eplim \frac{1}{\epsilon^2} \EEpe \Big[  e^{ \epsilon \langle \boldsymbol{\theta}, \mathbf{q}^+ \rangle } \Big( e^{- \epsilon \langle \boldsymbol{\theta}, \mathbf{u} \rangle}  -1 \Big) \Big] = \eplim \frac{1}{\epsilon^2} \EEpe \Big[ e^{ \epsilon \langle \boldsymbol{\theta}, \mathbf{q} \rangle } \Big] \Bigg( \EEpe \Big[ e^{ \epsilon \langle \boldsymbol{\theta}, \mathbf{a }- \mathbf{s } \rangle } \Big] - 1\Bigg).
\end{align*}

\textbf{Step 3:} [Second order approximation] From Lemma \ref{app: 3q_2ndorderapprox}, we have
\begin{align*}
    \lim_{\epsilon \rightarrow 0} \frac{1}{\epsilon^2} \EEpe \Big[e^{ \epsilon \langle \boldsymbol{\theta}, \mathbf{q}^+ \rangle } \Big( e^{- \epsilon \langle \boldsymbol{\theta}, \mathbf{u} \rangle}  -1 \Big) \Big] &=  - \Bigg \langle  \boldsymbol{\theta}, \lim_{\epsilon \rightarrow 0} \frac{1}{\epsilon} \EEpe \Big[  \mathbf{u} e^{ \epsilon \langle \boldsymbol{\theta}, \mathbf{q} \rangle} \Big] \Bigg \rangle =  - \langle \boldsymbol{\theta}, \mathbf{M}(\boldsymbol{\theta}) \rangle,\\
    \eplim \frac{1}{\epsilon^2} \Bigg( \EEpe \Big[ e^{ \epsilon \langle \boldsymbol{\theta}, \mathbf{a }- \mathbf{s } \rangle } \Big] - 1\Bigg) & =  - \frac{1}{2} \langle \boldsymbol \theta, \mathbf 1_3 \rangle + \frac{1}{2} \langle \boldsymbol \theta, \boldsymbol \sigma^2 \boldsymbol \theta \rangle .
\end{align*}
Using this, we arrive at the functional equation
\begin{align*}
    - \langle \boldsymbol{\theta}, \mathbf{M}(\boldsymbol{\theta}) \rangle
   & = \left( - \frac{1}{2} \langle \boldsymbol \theta, \mathbf 1_3 \rangle + \frac{1}{2} \langle \boldsymbol \theta, \boldsymbol \sigma^2 \boldsymbol \theta \rangle \right) L(\boldsymbol{\theta}), \ \ \forall \TT\in \capTT,
\end{align*}
where, 
\begin{align*}
    L(\boldsymbol \theta) = \lim_{\epsilon \rightarrow 0} \EEpe[e^{\epsilon \langle \boldsymbol \theta, \q \rangle}], && M_2(\boldsymbol \theta)= \lim_{\epsilon \rightarrow 0} \frac{1}{\epsilon} \EEpe [u_2e^{\epsilon \langle \boldsymbol \theta, \q \rangle}], && M_3(\boldsymbol \theta)= \lim_{\epsilon \rightarrow 0} \frac{1}{\epsilon} \EEpe [u_3e^{\epsilon \langle \boldsymbol \theta, \q \rangle}].
\end{align*}
Note that $M_1(\boldsymbol\theta)=0$ because $u_1=0$ by the definition of the service process.
 \hfill $\blacksquare$
\endproof

\proof{\textit{Proof outline for Theorem \ref{thm: 3q_dist}}.}
The Laplace transform of $\mathbf B \boldsymbol \Upsilon$, for any $\TT \in \boldsymbol \Theta$, is given by 
\begin{align*}
    \mathbb{E}[e^{\theta_1 (\Upsilon_1+\Upsilon_2)+\theta_2 \Upsilon_1 + \theta_3 \Upsilon_2 }]  &= \mathbb{E}[e^{(2\theta_2 + \theta_3) \Upsilon_1+(\theta_2 + 2\theta_3)\Upsilon_2}] \\
    &= \frac{1}{\bigg( 1- (2\theta_2 + \theta_3) \frac{3\sigma_2^2 + \sigma_3^2}{8}\bigg)\bigg( 1- (\theta_2 + 2\theta_3)\frac{\sigma_2^2 + 3\sigma_3^2}{8}\bigg)}.
\end{align*}
Now pick
\begin{align} 
    L(\boldsymbol \theta) = \frac{1}{\bigg( 1- (2\theta_2 + \theta_3) \frac{3\sigma_2^2 + \sigma_3^2}{8}\bigg)\bigg( 1- (\theta_2 + 2\theta_3)\frac{\sigma_2^2 + 3\sigma_3^2}{8}\bigg)},\nonumber\\ M_2 (\boldsymbol \theta)  = \frac{1}{1- (2\theta_2 + \theta_3)\frac{\sigma_2^2 + 3\sigma_3^2}{8}}, && M_3 (\boldsymbol \theta) = \frac{1}{1- (\theta_2 + 2\theta_3)\frac{3\sigma_2^2 + \sigma_3^2}{8}}.
\end{align}
We can show that $(L(\TT),M_2(\TT),M_3(\TT))$ given above satisfies the functional equation  $\mathcal{P}(\TT) =0, \ \forall\TT\in \capTT$ given in Eq. \eqref{eq: 3q_functional_eq} under the condition $2\sigma^2_1 = \sigma_2^2 +\sigma_3^2$. And from Lemma \ref{lem: 3q_uniqueness}, as the solution to the functional equation is unique, this has to be the only solution of the functional equation. 
Further, $L(\TT)$ matches with the Laplace transform of $\mathbf B \boldsymbol \Upsilon$ for all $\TT \in \boldsymbol \Theta$, that is 
\begin{align*}
    L(\TT) = \EE \big[e^{\langle \TT,\mathbf B \boldsymbol \Upsilon\rangle}\big], \ \forall \TT\in\capTT.
\end{align*}
Now, the result follows from Proposition \ref{prop: 3q_mgf_convergence}. \hfill $\blacksquare$
\endproof

\section{Discussion \& Future work}
\label{sec: discussion}

In this paper, we looked at queueing systems that do not satisfy the CRP condition and developed a technique using the Laplace transform to specify the heavy traffic distribution. The idea is to use complex exponential as the test function and set its drift to zero in steady-state. If the system satisfies the CRP condition, then this analysis gives the explicit closed-form expression for the Laplace transform of the heavy traffic distribution. For a non-CRP system, the same analysis gives an implicit equation which is termed the functional equation of the system. For the considered systems, we characterized their heavy traffic distribution using the functional equation and provided the solution to the functional equation under some specific conditions on the system parameters. Before concluding this paper, we present a few small remarks and future directions for this work.



\subsection{Uniqueness of functional equation for Input-queued switch} Proving the uniqueness of the solution the functional equation for the Input-queued switch has turned out to be a difficult task. The ideas presented in \cite{franceschi2019integral} are not enough to prove the statement given by Conjecture \ref{lem: switch_uniqueness}. One idea is to look at the extensions of the Carleman Boundary value problem and then attempt a similar technique as presented in \cite{franceschi2019integral}. Completing the proof for Conjecture \ref{lem: switch_uniqueness} is crucial to extend the results presented in this paper to more general SPNs.

\subsection{Heavy traffic distribution under general variance condition} For all three queueing systems considered in this paper, we have shown that the heavy traffic distribution of the steady-state scaled queue length vector can be represented by independent exponential random variables under some specific condition on the variance of the arrival process. The authors in \cite{franceschi2019integral} use the theory of the Carleman Boundary value problem to solve for the heavy traffic distribution when the corresponding functional equation consists of two variables, and provide the solution as a Cauchy integral which are hard to interpret. Finding the heavy traffic distribution under more general variance conditions is still an open problem.


%
%
%


\bibliographystyle{informs2014} 
\bibliography{references.bib}

\begin{thebibliography}{53}
\providecommand{\natexlab}[1]{#1}
\providecommand{\url}[1]{\texttt{#1}}
\providecommand{\urlprefix}{URL }

\bibitem[{Bell \protect\BIBand{} Williams(2005)}]{belwil_state_space}
Bell S, Williams R (2005) Dynamic scheduling of a parallel server system in
  heavy traffic with complete resource pooling: asymptotic optimality of a
  threshold policy. \emph{Electronic J. of Probability} 1044--1115.

\bibitem[{Billingsley(1986)}]{Bill86}
Billingsley P (1986) \emph{Probability and Measure} (John Wiley and Sons),
  second edition.

\bibitem[{Braverman et~al.(2017{\natexlab{a}})Braverman, Dai, \protect\BIBand{}
  Feng}]{braverman2017stein}
Braverman A, Dai J, Feng J (2017{\natexlab{a}}) Stein's method for steady-state
  diffusion approximations: {A}n introduction through the {E}rlang-{A} and
  {E}rlang-{C} models. \emph{Stochastic Systems} 6(2):301--366.

\bibitem[{Braverman et~al.(2017{\natexlab{b}})Braverman, Dai, \protect\BIBand{}
  Miyazawa}]{braverman_BAR}
Braverman A, Dai J, Miyazawa M (2017{\natexlab{b}}) Heavy traffic approximation
  for the stationary distribution of a {G}eneralized {J}ackson {N}etwork: The
  {BAR} approach. \emph{Stochastic Systems} 7(1):143--196.

\bibitem[{Dai \protect\BIBand{} Dieker(2011)}]{dai2011nonnegativity}
Dai J, Dieker A (2011) Nonnegativity of solutions to the basic adjoint
  relationship for some diffusion processes. \emph{Queueing Systems}
  68(3):295--303.

\bibitem[{Dai \protect\BIBand{} Harrison(1992)}]{dai1992reflected}
Dai J, Harrison JM (1992) Reflected brownian motion in an orthant: numerical
  methods for steady-state analysis. \emph{The Annals of Applied Probability}
  65--86.

\bibitem[{Dai \protect\BIBand{} Harrison(1991)}]{dai1991steady}
Dai JG, Harrison JM (1991) Steady-state analysis of rbm in a rectangle:
  Numerical methods and a queueing application. \emph{The Annals of Applied
  Probability} 1(1):16--35.

\bibitem[{Dai \protect\BIBand{} Miyazawa(2011)}]{dai2011reflecting}
Dai JG, Miyazawa M (2011) Reflecting brownian motion in two dimensions: Exact
  asymptotics for the stationary distribution. \emph{Stochastic Systems}
  1(1):146--208.

\bibitem[{Eryilmaz \protect\BIBand{} Srikant(2012)}]{atilla}
Eryilmaz A, Srikant R (2012) Asymptotically tight steady-state queue length
  bounds implied by drift conditions. \emph{Queueing Systems} 72(3-4):311--359,
  ISSN 0257-0130.

\bibitem[{Foschini \protect\BIBand{} Salz(1978)}]{foschini1978basic}
Foschini G, Salz J (1978) A basic dynamic routing problem and diffusion.
  \emph{IEEE Transactions on Communications} 26(3):320--327.

\bibitem[{Franceschi \protect\BIBand{}
  Kourkova(2017)}]{Franceschi2017asymptotic}
Franceschi S, Kourkova I (2017) Asymptotic expansion of stationary distribution
  for reflected brownian motion in the quarter plane via analytic approach.
  \emph{Stochastic Systems} 7(1):32--94.

\bibitem[{Franceschi \protect\BIBand{} Raschel(2017)}]{franceschi2017tutte}
Franceschi S, Raschel K (2017) Tutte’s invariant approach for brownian motion
  reflected in the quadrant. \emph{ESAIM: Probability and Statistics}
  21:220--234.

\bibitem[{Franceschi \protect\BIBand{} Raschel(2019)}]{franceschi2019integral}
Franceschi S, Raschel K (2019) Integral expression for the stationary
  distribution of reflected brownian motion in a wedge. \emph{Bernoulli}
  25(4B):3673--3713.

\bibitem[{Gamarnik \protect\BIBand{} Zeevi(2006)}]{gamarnik2006validity}
Gamarnik D, Zeevi A (2006) Validity of heavy traffic steady-state
  approximations in {G}eneralized {J}ackson {N}etworks. \emph{The Annals of
  Applied Probability} 56--90.

\bibitem[{Gurvich(2014)}]{gurvich2014diffusion}
Gurvich I (2014) Diffusion models and steady-state approximations for
  exponentially ergodic {M}arkovian queues. \emph{The Annals of Applied
  Probability} 24(6):2527--2559.

\bibitem[{Harrison(2013)}]{harrison_2013_book}
Harrison J (2013) \emph{Brownian Models of Performance and Control} (Cambridge
  University Press),
  \urlprefix\url{http://dx.doi.org/10.1017/CBO9781139087698}.

\bibitem[{Harrison \protect\BIBand{} L{\'o}pez(1999)}]{harlop_state_space}
Harrison J, L{\'o}pez M (1999) Heavy traffic resource pooling in
  parallel-server systems. \emph{Queueing Systems} 339--368.

\bibitem[{Harrison(1978)}]{harrison1978diffusion}
Harrison JM (1978) The diffusion approximation for tandem queues in heavy
  traffic. \emph{Advances in Applied Probability} 10(4):886--905.

\bibitem[{Harrison(1998)}]{harrison1998heavy}
Harrison JM (1998) Heavy traffic analysis of a system with parallel servers:
  asymptotic optimality of discrete-review policies. \emph{Annals of applied
  probability} 822--848.

\bibitem[{Harrison \protect\BIBand{}
  Williams(1987{\natexlab{a}})}]{harrison1987brownian}
Harrison JM, Williams RJ (1987{\natexlab{a}}) Brownian models of open queueing
  networks with homogeneous customer populations. \emph{Stochastics: An
  International Journal of Probability and Stochastic Processes} 22(2):77--115.

\bibitem[{Harrison \protect\BIBand{}
  Williams(1987{\natexlab{b}})}]{harrison1987multidimensional}
Harrison JM, Williams RJ (1987{\natexlab{b}}) Multidimensional reflected
  brownian motions having exponential stationary distributions. \emph{The
  Annals of Probability} 115--137.

\bibitem[{Hurtado-Lange \protect\BIBand{}
  Maguluri(2019)}]{Hurtado-gen-switch-SIGMETRICS}
Hurtado-Lange D, Maguluri ST (2019) Heavy-traffic analysis of the generalized
  switch under multidimensional state space collapse. \emph{SIGMETRICS Perform.
  Eval. Rev.} 47(2):36–38, ISSN 0163-5999,
  \urlprefix\url{http://dx.doi.org/10.1145/3374888.3374902}.

\bibitem[{Hurtado-Lange \protect\BIBand{}
  Maguluri(2020)}]{hurtado2020transform}
Hurtado-Lange D, Maguluri ST (2020) Transform methods for heavy-traffic
  analysis. \emph{Stochastic Systems} 10(4):275--309.

\bibitem[{Hurtado~Lange \protect\BIBand{} Maguluri(2022)}]{hurtado2022heavy}
Hurtado~Lange DA, Maguluri ST (2022) Heavy-traffic analysis of queueing systems
  with no complete resource pooling. \emph{Mathematics of Operations Research}
  47(4):3129--3155.

\bibitem[{Jhunjhunwala \protect\BIBand{} Maguluri(2021)}]{jhunjhunwala2021low}
Jhunjhunwala PR, Maguluri ST (2021) Low-complexity switch scheduling
  algorithms: Delay optimality in heavy traffic. \emph{IEEE/ACM Transactions on
  Networking} .

\bibitem[{Kang et~al.(2009)Kang, Kelly, Lee, \protect\BIBand{}
  Williams}]{kang2009state}
Kang W, Kelly F, Lee N, Williams R (2009) State space collapse and diffusion
  approximation for a network operating under a fair bandwidth sharing policy.
  \emph{The Annals of Applied Probability} 1719--1780.

\bibitem[{Kang \protect\BIBand{} Williams(2012)}]{kang2012diffusion}
Kang W, Williams R (2012) Diffusion approximation for an input-queued switch
  operating under a maximum weight matching policy. \emph{Stochastic Systems}
  2(2):277--321.

\bibitem[{Kingman(1961)}]{kingman1961_charfunction}
Kingman J (1961) The single server queue in heavy traffic. \emph{Mathematical
  Proceedings of the Cambridge Philosophical Society}, volume~57, 902--904
  (Cambridge University Press).

\bibitem[{Kingman(1962{\natexlab{a}})}]{kingman1962_brownian}
Kingman J (1962{\natexlab{a}}) On queues in heavy traffic. \emph{Journal of the
  Royal Statistical Society. Series B (Methodological)} 383--392.

\bibitem[{Kingman(1962{\natexlab{b}})}]{kingman}
Kingman J (1962{\natexlab{b}}) Some inequalities for the queue {GI/G/1}.
  \emph{Biometrika} 315--324.

\bibitem[{Litvinchuk \protect\BIBand{}
  Nechaev(1970)}]{litvinchuk1970generalized}
Litvinchuk GS, Nechaev A (1970) A generalized carleman boundary value problem.
  \emph{Matematicheskii Sbornik} 124(1):30--54.

\bibitem[{Maguluri et~al.(2018)Maguluri, Burle, \protect\BIBand{}
  Srikant}]{QUESTA_switch}
Maguluri ST, Burle S, Srikant R (2018) Optimal heavy-traffic queue length
  scaling in an incompletely saturated switch. \emph{Queueing Systems}
  88(3-4):279--309.

\bibitem[{Maguluri \protect\BIBand{} Srikant(2016)}]{maguluri2016heavy}
Maguluri ST, Srikant R (2016) Heavy traffic queue length behavior in a switch
  under the maxweight algorithm. \emph{Stochastic Systems} 6(1):211--250.

\bibitem[{McKeown et~al.(1996)McKeown, Anantharam, \protect\BIBand{}
  Walrand}]{mckeown96walrand}
McKeown N, Anantharam V, Walrand J (1996) Achieving 100\% throughput in an
  input queued switch. \emph{Proceedings of IEEE INFOCOM}, 296--302.

\bibitem[{McKeown(1995)}]{mckeown1995scheduling}
McKeown NW (1995) \emph{Scheduling algorithms for input-queued cell switches}.
  Ph.D. thesis, Citeseer.

\bibitem[{M{\"o}rters \protect\BIBand{} Peres(2010)}]{morters2010brownian}
M{\"o}rters P, Peres Y (2010) \emph{Brownian motion}, volume~30 (Cambridge
  University Press).

\bibitem[{Reiman(1983)}]{rei_state_space}
Reiman MI (1983) Some diffusion approximations with state space collapse.
  \emph{Proceedings of International Seminar on Modelling and Performance
  Evaluation Methodology, Lecture Notes in Control and Information Sciences},
  209--240 (Berlin: Springer).

\bibitem[{Rubino \protect\BIBand{} Ata(2009)}]{rubino2009dynamic}
Rubino M, Ata B (2009) Dynamic control of a make-to-order, parallel-server
  system with cancellations. \emph{Operations Research} 57(1):94--108.

\bibitem[{Shah et~al.(2011)Shah, Tsitsiklis, \protect\BIBand{}
  Zhong}]{shah_switch_open}
Shah D, Tsitsiklis J, Zhong Y (2011) Optimal scaling of average queue sizes in
  an input-queued switch: an open problem. \emph{Queueing Systems}
  68(3-4):375--384, ISSN 0257-0130.

\bibitem[{Shah et~al.(2014)Shah, Walton, \protect\BIBand{}
  Zhong}]{shah2012optimal}
Shah D, Walton NS, Zhong Y (2014) Optimal queue-size scaling in switched
  networks. \emph{Ann. Appl. Probab.} 24(6):2207--2245.

\bibitem[{Srikant \protect\BIBand{} Ying(2014)}]{srikant2014communication}
Srikant R, Ying L (2014) \emph{Communication Networks: An Optimization, Control
  and Stochastic Networks Perspective} (Cambridge University Press), ISBN
  9781107036055.

\bibitem[{Stolyar(2004)}]{stolyar2004maxweight}
Stolyar A (2004) Max{W}eight scheduling in a generalized switch: State space
  collapse and workload minimization in heavy traffic. \emph{Annals of Applied
  Probability} 1--53.

\bibitem[{{Tassiulas}(1998)}]{665071}
{Tassiulas} L (1998) Linear complexity algorithms for maximum throughput in
  radio networks and input queued switches. \emph{Proceedings. IEEE INFOCOM
  '98, the Conference on Computer Communications.}, volume~2, 533--539 vol.2,
  ISSN 0743-166X, \urlprefix\url{http://dx.doi.org/10.1109/INFCOM.1998.665071}.

\bibitem[{Tezcan \protect\BIBand{} Dai(2010)}]{tezcan2010dynamic}
Tezcan T, Dai J (2010) Dynamic control of n-systems with many servers:
  Asymptotic optimality of a static priority policy in heavy traffic.
  \emph{Operations Research} 58(1):94--110.

\bibitem[{Uhlenbeck \protect\BIBand{} Ornstein(1930)}]{uhlenbeck1930theory}
Uhlenbeck GE, Ornstein LS (1930) On the theory of the brownian motion.
  \emph{Physical review} 36(5):823.

\bibitem[{Vlasiou et~al.(2014)Vlasiou, Zhang, \protect\BIBand{}
  Zwart}]{zwart_bandwidth_diffusion}
Vlasiou M, Zhang J, Zwart B (2014) Insensitivity of proportional fairness in
  critically loaded bandwidth sharing networks. \emph{Preprint arXiv:1411.4841}
  .

\bibitem[{Wang et~al.(2018)Wang, Maguluri, Srikant, \protect\BIBand{}
  Ying}]{Weina_bandwidth}
Wang W, Maguluri ST, Srikant R, Ying L (2018) Heavy-traffic delay insensitivity
  in connection-level models of data transfer with proportionally fair
  bandwidth sharing. \emph{SIGMETRICS Perform. Eval. Rev.} 45(3):232--245, ISSN
  0163-5999, \urlprefix\url{http://dx.doi.org/10.1145/3199524.3199565}.

\bibitem[{Wang et~al.(2022)Wang, Maguluri, Srikant, \protect\BIBand{}
  Ying}]{wang2022heavy}
Wang W, Maguluri ST, Srikant R, Ying L (2022) Heavy-traffic insensitive bounds
  for weighted proportionally fair bandwidth sharing policies.
  \emph{Mathematics of Operations Research} .

\bibitem[{Williams(1998)}]{Williams_state_space}
Williams R (1998) Diffusion approximations for open multiclass queueing
  networks: Sufficient conditions involving state space collapse.
  \emph{Queueing Systems Theory and Applications} 27 -- 88.

\bibitem[{Williams(2000)}]{Williams_CRP}
Williams R (2000) On dynamic scheduling of a parallel server system with
  complete resource pooling. \emph{Fields Institute Communications}
  28(49-71):5--1.

\bibitem[{Williams(2016)}]{williams_survey_SPN}
Williams R (2016) Stochastic processing networks. \emph{Annual Review of
  Statistics and Its Application} 3:323--345.

\bibitem[{Williams(1987)}]{williams1987reflected}
Williams RJ (1987) Reflected brownian motion with skew symmetric data in a
  polyhedral domain. \emph{Probability Theory and Related Fields}
  75(4):459--485.

\bibitem[{Ye \protect\BIBand{} Yao(2012)}]{yeyaobandwidth2012}
Ye HQ, Yao D (2012) A stochastic network under proportional fair resource
  control—diffusion limit with multiple bottlenecks. \emph{Operations
  Research} 60(3):716--738.

\end{thebibliography}
\renewcommand{\theHsection}{A\arabic{section}}
\begin{APPENDICES}


\section{Proofs for Three-queue system}
\label{app: 3q}

Recall that $\pi^\epsilon$ denotes the steady state distribution of the Three-queue system with the heavy traffic parameter $\epsilon$, and as the underlying Markov chain is positive recurrent, $\pi^\epsilon$ exists and is unique. Further, we use $\mathbb E_{\pi^\epsilon}[\cdot]$ to denote the expectation under the probability distribution $\pi_\epsilon$. Further, $\epsilon_0$ and $\theta_0$ are as given in Definition \ref{def: 3q_ssc}.

\subsection{Properties of the projection}
\label{app: 3q_projection}
\begin{lemma}
    \label{lem: 3q_projection}
     We define following matrices: 
    \begin{align*}
        \mathbf B = \begin{bmatrix}
                        1 & 1\\
                        1 &  0\\
                        0 & 1
                \end{bmatrix}, && 
        \mathbf D = \mathbf B^T\mathbf B = \begin{bmatrix}
                        2 & 1\\
                        1 &  2
                \end{bmatrix}, && 
        \mathbf D^{-1} = \frac{1}{3}  \begin{bmatrix}
                        2 & -1 \\
                        -1 &  2
                \end{bmatrix}, && 
        \mathbf A = \mathbf{B}(\mathbf{B}^T\mathbf{B})^{-1}\mathbf{B}^T= \frac{1}{3} \begin{bmatrix}
                        2 & 1 & 1\\
                        1 &  2 & -1\\
                        1&-1&2
                \end{bmatrix}.       
    \end{align*}
    Let $\mathbf x \in \mathbb{C}^3$ and suppose $\mathbf x_{\|}$ denotes the projection of $\mathbf{x}$ onto the space $\mathcal{S}$, where $\mathcal{S}$ is the space spanned by the columns of $\mathbf B$. And $\mathbf x_{\perp} = \mathbf x - \mathbf x_{\|}$.
    \begin{enumerate}
    \item For any $\mathbf x \in \mathbb{C}^3$ and $\TT \in \mathcal{S}$, 
    \begin{equation*}
        \langle \boldsymbol \theta, \mathbf{x} \rangle = \langle \boldsymbol \theta, \mathbf{x}_{\|} \rangle =  (2\theta_2+\theta_3) x_{\| 2} + (\theta_2+2\theta_3) x_{\| 3}.
    \end{equation*}
    \item The closed form expression for $\mathbf x_{\|}$ is given by $\mathbf x_{\|} = \mathbf A \mathbf{x}$. And the perpendicular component $\mathbf x_\perp$ is 
    \begin{equation*}
    \mathbf x_{\perp} = \frac{1}{3}(x_2+x_3 -x_1)\begin{bmatrix}
        -1\\1\\1
    \end{bmatrix}.
\end{equation*}
    \end{enumerate}
\end{lemma}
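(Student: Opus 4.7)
The plan is to verify both claims by direct linear-algebraic computation; the lemma is a routine collection of identities about the orthogonal projection onto the column space $\mathcal{S}$ of $\mathbf{B}$, and the hard work has already been front-loaded into the algebra that defines $\mathbf{A}$ and $\mathbf{D}^{-1}$ in the statement.

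For the first claim, I would begin by noting that $\boldsymbol{\theta} = \mathbf{B}\boldsymbol{\phi}$ lies in $\mathcal{S}$ immediately from the definition of $\mathcal{S}$ as the column span of $\mathbf{B}$. Next, decomposing $\mathbf{x} = \mathbf{x}_{\|} + \mathbf{x}_{\perp}$ and using the orthogonality $\mathbf{x}_{\perp} \perp \mathcal{S} \ni \boldsymbol{\theta}$ yields $\langle \boldsymbol{\theta}, \mathbf{x}\rangle = \langle \boldsymbol{\theta}, \mathbf{x}_{\|}\rangle$. For the three-queue system the two-dimensional representation $\mathbf{w}$ of $\mathbf{x}_{\|} = \mathbf{B}\mathbf{w}$ is unique (as emphasized in Section~\ref{sec: 3q_model}), and writing $x_{\|i}$ for the $i$-th entry of $\mathbf{w}$, I would expand
\begin{equation*}
    \langle \mathbf{B}\boldsymbol{\phi},\, \mathbf{B}\mathbf{w}\rangle \;=\; \boldsymbol{\phi}^T\mathbf{B}^T\mathbf{B}\,\bar{\mathbf{w}} \;=\; \boldsymbol{\phi}^T \mathbf{D}\,\bar{\mathbf{w}}
\end{equation*}
and then read off the columns $\mathbf{d}_1,\mathbf{d}_2$ of $\mathbf{D}$ to recover the sum $\langle \mathbf{d}_1,\boldsymbol{\phi}\rangle x_{\|1} + \langle \mathbf{d}_2,\boldsymbol{\phi}\rangle x_{\|2}$, where the reality of $\mathbf{d}_1,\mathbf{d}_2$ is what allows the complex conjugation to be shifted between $\boldsymbol{\phi}$ and $\mathbf{w}$ in the inner product.

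For the second claim, the closed form $\mathbf{x}_{\|} = \mathbf{A}\mathbf{x}$ follows from the standard formula for the orthogonal projector onto the column space of a matrix with linearly independent columns; linear independence of the two columns of $\mathbf{B}$ is immediate by inspection, so $\mathbf{A} = \mathbf{B}(\mathbf{B}^T\mathbf{B})^{-1}\mathbf{B}^T$ is well defined, and uniqueness of the orthogonal projection identifies it with $\mathbf{x} \mapsto \mathbf{x}_{\|}$. The formula for $\mathbf{x}_{\perp}$ then follows by computing
\begin{equation*}
    \mathbf{I}_3 - \mathbf{A} \;=\; \frac{1}{3}\begin{bmatrix} 1 & -1 & -1 \\ -1 & 1 & 1 \\ -1 & 1 & 1 \end{bmatrix}
\end{equation*}
from the stated $\mathbf{A}$, and observing that every row of $\mathbf{I}_3 - \mathbf{A}$ is a scalar multiple of $(-1,1,1)$, so $(\mathbf{I}_3-\mathbf{A})\mathbf{x} = \tfrac{1}{3}(x_2+x_3-x_1)(-1,1,1)^T$.

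There is no substantive obstacle: the entire argument is bookkeeping with a $3\times 3$ projection matrix. The only small subtlety is the complex conjugation built into the paper's inner product $\langle \mathbf{x},\mathbf{y}\rangle = \mathbf{x}^T \bar{\mathbf{y}}$, which must be handled cleanly when transporting factors between $\boldsymbol{\phi}$ and $\mathbf{w}$ in the first claim; the symmetry $\mathbf{D}^T = \mathbf{D}$ and the fact that $\mathbf{D}$ has real entries make this immediate.
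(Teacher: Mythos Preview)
Your proposal is correct and follows essentially the same approach as the paper's own proof: both use $\boldsymbol\theta\in\mathcal S$ to drop $\mathbf x_\perp$ from the inner product, then expand $\langle\mathbf B\boldsymbol\phi,\mathbf B\mathbf w\rangle$ through $\mathbf D=\mathbf B^T\mathbf B$ to read off the column coefficients, and both invoke the standard projection formula $\mathbf A=\mathbf B(\mathbf B^T\mathbf B)^{-1}\mathbf B^T$ for part~2. The only cosmetic difference is that the paper writes the expansion componentwise while you package it as $\boldsymbol\phi^T\mathbf D\bar{\mathbf w}$; your explicit computation of $\mathbf I_3-\mathbf A$ for $\mathbf x_\perp$ and your remark on handling the conjugation via the reality and symmetry of $\mathbf D$ are, if anything, slightly more careful than the paper's treatment.
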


Lemma \ref{lem: 3q_projection} provides the properties of the projection of any vector onto the space $\mathcal{S}$. By the definition of the matrix $\mathbf{A}$ we have the relation that $\mathbf{B}^T \mathbf{A} = \mathbf{B}^T$ and $\mathbf{A}\mathbf{B} = \mathbf{B}$. The proof of Lemma \ref{lem: 3q_projection} follows by simple application of linear algebra and the mathematical details, as provided below. 

\proof{\textit{Proof of Lemma \ref{lem: 3q_projection}}.}
 Part 1 follows by using the structure of the subspace $\mathcal{S}$. Observe that as $\boldsymbol \theta \in \mathcal{S}$, we have that $\langle \boldsymbol \theta, \mathbf{x}_\perp \rangle = 0$. This gives us that
 \begin{align*}
     \langle \boldsymbol \theta, \mathbf{x} \rangle = \langle \boldsymbol \theta, \mathbf{x}_{\|} \rangle &= \theta_1 (x_{\| 2}+x_{\| 3})+\theta_2 x_{\| 2} +\theta_3 x_{\| 3} \\
     & =  (\theta_2+\theta_3)(x_{\| 2}+x_{\| 3})+\theta_2 x_{\| 2} +\theta_3 x_{\| 3}\\
     & = (2\theta_2+\theta_3)x_{\| 2}+(\theta_2+2\theta_3)x_{\| 3 }.
 \end{align*}
 This completes the proof of Part 1. The proof of Part 2 follows simply by using the theory of projection to a linear subspace. \hfill $\blacksquare$
\endproof

\subsection{Required Lemma}
\label{app: 3q_mgf_equivalence}
Recall that 
\begin{align*}
    \boldsymbol \Theta & = \{\boldsymbol \theta \in \mathbb{C}^{3} : \boldsymbol \theta \in \mathcal{S}, \ Re(\mathbf B^T \boldsymbol \theta) \leq  \mathbf 0_{2}\}\\
    &= \{\boldsymbol \theta \in \mathbb{C}^{3} : \theta_1 = \theta_2+\theta_3, Re(2\theta_2+\theta_3)\leq 0, Re(\theta_2+2\theta_3)\leq 0\},\\
    \tilde{ \boldsymbol \Theta } & = \{\tilTT \in \mathbb{C}^{3}: Re(\tilTT) \leq \mathbf 0_{3}\}.
\end{align*}
Further, for any $\tilTT \in \tilde{ \boldsymbol \Theta }$, we can write $\tilTT = \TT +\TT_\perp$ such that $\TT\in \capTT$ and $\TT_\perp \in \mathcal{ S}^\perp$. 
 Before presenting the proof of the results for Three-queue system, we present a necessary Lemma as given below.
\begin{lemma}
\label{lem: 3q_mgf_equivalence}
Consider the Three-queue system as defined in Section \ref{sec: 3q_model} operating under scheduling policy that achieves SSC according to the Definition \ref{def: 3q_ssc}. 
\begin{enumerate}[label=(\alph*), ref=\ref{lem: 3q_mgf_equivalence}.\alph*]
    \item \label{lem: 3q_mgf_equivalence_a} For any $\epsilon< \epsilon_0$ and $\TT\in \boldsymbol \Theta^\epsilon = \boldsymbol \Theta \cap \{\TT \in \mathbb C^3: \|\TT\| \leq \theta_0/36\epsilon\}$, we have 
    \begin{align*}
        \big| \EEpe  \big[ e^{\epsilon \langle \TT , \q \rangle} \big] \big| < \infty
    \end{align*}
    \item \label{lem: 3q_mgf_equivalence_b} Suppose $\tilde{\boldsymbol \theta} \in \tilde{\boldsymbol \Theta}$, such that $\tilTT = \TT +\TT_{\perp}$ such that $\TT \in \capTT$ and $\TT_{\perp} \in \mathcal{ S}^\perp$.  Then, we have
    \begin{align*}
        \lim_{\epsilon\rightarrow 0}  \big| \EEpe  \big[ e^{\epsilon \langle \tilTT , \q \rangle} \big] \big| \leq 1, && \lim_{\epsilon \rightarrow 0}\EEpe[e^{\epsilon \langle \tilde{\boldsymbol \theta}, \q \rangle}] = \lim_{\epsilon \rightarrow 0} \EEpe[e^{\epsilon \langle \boldsymbol \theta, \q \rangle}] = \lim_{\epsilon \rightarrow 0} \EEpe[e^{\epsilon ((2\theta_2+\theta_3) q_2 + (\theta_2+2\theta_3) q_3)}].
    \end{align*}
    \item \label{lem: 3q_mgf_equivalence_c} For any $\boldsymbol \theta \in \boldsymbol \Theta$, we have 
    \begin{align*}
        \lim_{\epsilon \rightarrow 0} \frac{1}{\epsilon} \big|\EEpe[u_2 e^{\epsilon \langle \TT , \q \rangle}] \big|\leq 1, && \lim_{\epsilon \rightarrow 0} \frac{1}{\epsilon}\EEpe[u_2 e^{\epsilon \langle \boldsymbol \theta, \q \rangle}] 
    =  \lim_{\epsilon \rightarrow 0}\frac{1}{\epsilon} \EEpe[u_2 e^{\epsilon (\theta_2+2\theta_3) q_3}],
    \end{align*}
    and 
    \begin{align*}
        \lim_{\epsilon \rightarrow 0} \frac{1}{\epsilon} \big|\EEpe[u_3 e^{\epsilon \langle \TT , \q \rangle}] \big|\leq 1, && \lim_{\epsilon \rightarrow 0} \frac{1}{\epsilon}\EEpe[u_3 e^{\epsilon \langle \boldsymbol \theta, \q \rangle}] 
    =  \lim_{\epsilon \rightarrow 0}\frac{1}{\epsilon} \EEpe[u_3 e^{\epsilon (2\theta_2+\theta_3) q_2}].
    \end{align*}
\end{enumerate}
\end{lemma}

A main consequence of Lemma \ref{lem: 3q_mgf_equivalence} is that the limiting terms $(L(\TT),M_2(\TT),M_3(\TT))$ exists for any $\TT\in \capTT$. This condition is required to establish the functional equation in Theorem \ref{thm: 3q_functional_eq}. 
The result in Lemma \ref{lem: 3q_mgf_equivalence} is a consequence of the state space collapse of the Three-queue system onto the subspace $\mathcal{S}$.

\proof{\textit{Proof of Lemma \ref{lem: 3q_mgf_equivalence}}.}
 As given, $\tilde{\boldsymbol \theta} \in \tilde{\boldsymbol \Theta}$, such that $\tilTT = \TT +\TT_{\perp}$ such that $\TT \in \capTT$ and $\TT_{\perp} \in \mathcal{ S}^\perp$. Then, 
\begin{align}
\label{eq: 3q_theta_relation}
  \langle \tilde{\boldsymbol \theta}, \q \rangle &= \langle \boldsymbol  \theta , \q \rangle  + \langle \boldsymbol  \theta_{\perp} , \q \rangle \nonumber\\
  & \stackrel{(a)}{=} \langle \boldsymbol  \theta , \q \rangle  + \langle \boldsymbol  \theta_{\perp} , \q_{\perp} \rangle \nonumber\\
  &\stackrel{(b)}{=} (\theta_2 +\theta_3)q_1 + \theta_2 q_2 + \theta_3 q_3 + \langle \boldsymbol  \theta_{\perp} , \q_{\perp} \rangle\nonumber\\
  & = (2\theta_2 +\theta_3)q_2 +(\theta_2 +2\theta_3) q_3 + (\theta_2 +\theta_3) (q_1 - q_2 - q_3) + \langle \boldsymbol  \theta_{\perp} , \q_{\perp} \rangle\nonumber\\
  & \stackrel{(c)}{=}(2\theta_2+\theta_3) q_2 + (\theta_2+2\theta_3) q_3 - \langle 3(\theta_2 +\theta_3)\mathbf{1}_3, \q_{\perp} \rangle+ \langle \boldsymbol  \theta_{\perp} , \q_{\perp} \rangle\nonumber\\
  & \stackrel{(d)}{=} (2\theta_2+\theta_3) q_2 + (\theta_2+2\theta_3) q_3 +  \langle \boldsymbol  \theta' , \q_{\perp} \rangle,
\end{align}
where (a) follows because $\langle \boldsymbol  \theta_{\perp} , \q_\| \rangle =0$ as $\boldsymbol  \theta_{\perp} \in \mathcal{S}^\perp$ and $\q_\| \in \mathcal S$; (b) follows by using $\boldsymbol \theta \in \mathcal{S}$ and so $\theta_1= \theta_2+\theta_3$; (c) follows by using Part 2 of Lemma \ref{lem: 3q_projection}; and (d) follows by taking $\boldsymbol  \theta' = \boldsymbol  \theta_{\perp} - 3(\theta_2+\theta_3)\mathbf 1_3$. 
As a consequence of calculation in Eq. \eqref{eq: 3q_theta_relation}, we also have $\langle \boldsymbol  \theta , \q \rangle = (2\theta_2+\theta_3) q_2 + (\theta_2+2\theta_3) q_3 - \langle 3(\theta_2 +\theta_3)\mathbf{1}_3, \q_{\perp} \rangle $ for any $\boldsymbol \theta \in \boldsymbol \Theta$. 
Now, suppose $X$ is a random variable, which is measurable with respect to the probability measure $\pi_\epsilon$. Then,
\begin{align}
    \label{eq: 3q_mgf_equi_part1}
        & \left| \EEpe[ X e^{\epsilon \langle \tilde{\boldsymbol \theta}, \q \rangle}] - \EEpe[X e^{\epsilon ((2\theta_2+\theta_3) q_2 + (\theta_2+2\theta_3) q_3)}] \right| \nonumber \\
        & \ \ \ \ = \EEpe\big[|X|\left|e^{\epsilon ((2\theta_2+\theta_3) q_2 + (\theta_2+2\theta_3) q_3)} \right| \big|  \big( 1- e^{\epsilon \langle \boldsymbol \theta', \q_{\perp} \rangle}   \big) \big|\big] \nonumber\\
        & \ \ \ \ \stackrel{(a)}{\leq} \EEpe\bigg[|X|\left|  1- e^{\epsilon \langle \boldsymbol \theta', \q_{\perp} \rangle}   \right| \bigg]\nonumber \allowdisplaybreaks \\
        & \ \ \ \  \stackrel{(b)}{\leq}  \EEpe\bigg[ X |\epsilon \langle \boldsymbol \theta', \q_{\perp} \rangle|  e^{\epsilon | \langle \boldsymbol \theta', \q_{\perp} \rangle|} \bigg] \nonumber \allowdisplaybreaks \\
        & \ \ \ \  \stackrel{(c)}{\leq} \EEpe[|X|^2]^{\frac{1}{2}} \EEpe\big[ |\epsilon \langle \boldsymbol \theta', \q_{\perp} \rangle|^{4} \big]^{\frac{1}{4}} \EEpe\bigg[ e^{4\epsilon | \langle \boldsymbol \theta', \q_{\perp} \rangle|} \bigg]^{\frac{1}{4}} \nonumber \allowdisplaybreaks \\
        & \ \ \ \ \stackrel{(d)}{\leq} \epsilon \| \boldsymbol \theta' \| \EEpe[|X|^2]^{\frac{1}{2}}  \EEpe\big[ \| \q_{\perp} \|^{4} \big]^{\frac{1}{4}} \EEpe\bigg[ e^{4\epsilon \|\TT'\|\| \q_{\perp}\|} \bigg]^{\frac{1}{4}},
\end{align}
where (a) follows by using $Re(2\theta_2+\theta_3 ) \leq 0$ and $Re(\theta_2+2\theta_3 ) \leq 0$ as $\TT \in \boldsymbol \Theta $; (b) holds because $|e^x-1| \leq |x|e^{|x|}$ for any $x\in \mathbb{C}$; (c) and (d) holds by using Cauchy-Schwarz inequality. 

By replacing $\tilTT$ with $\TT\in \boldsymbol\Theta$ and $\TT'$ with $-3(\theta_2+\theta_3)\mathbf 1_3$ in Eq. \eqref{eq: 3q_mgf_equi_part1}, and using $\|3(\theta_2+\theta_3)\mathbf 1_3\| \leq 9\|\TT\|$ for any $\TT\in \boldsymbol \Theta$, we have
\begin{align}
\label{eq: 3q_mgf_equi_part2}
   &\left| \EEpe[ X e^{\epsilon \langle \boldsymbol \theta, \q \rangle}] - \EEpe[X e^{\epsilon ((2\theta_2+\theta_3) q_2 + (\theta_2+2\theta_3) q_3)}] \right| \nonumber \\
   & \ \ \ \ \leq 9\epsilon \| \boldsymbol \theta \| \EEpe[|X|^2]^{\frac{1}{2}}  \EEpe\big[ \| \q_{\perp} \|^{4} \big]^{\frac{1}{4}} \EEpe\bigg[ e^{36\epsilon \|\TT\|\| \q_{\perp}\|} \bigg]^{\frac{1}{4}}.
\end{align}
From Definition \ref{def: 3q_ssc}, for any $\epsilon<\epsilon_0$ and $\|\TT\|< \theta_0/36\epsilon$, we know
\begin{align}
\label{eq: 3q_mgf_equiv_qperp}
    \EEpe\big[ \| \q_{\perp} \|^{4} \big]^{\frac{1}{4}} \EEpe\bigg[ e^{36\epsilon \|\TT\|\| \q_{\perp}\|} \bigg]^{\frac{1}{4}} < \infty.
\end{align}
Thus, from Eq. \eqref{eq: 3q_mgf_equi_part2}, by substituting $X=1$, for any $\TT \in \boldsymbol \Theta^\epsilon$, we have 
\begin{align*}
    \left| \EEpe[ e^{\epsilon \langle \boldsymbol \theta, \q \rangle}] \right|& \leq \left| \EEpe[ e^{\epsilon ((2\theta_2+\theta_3) q_2 + (\theta_2+2\theta_3) q_3)}] \right| + 9\epsilon \| \boldsymbol \theta \|  \EEpe\big[ \| \q_{\perp} \|^{4} \big]^{\frac{1}{4}} \EEpe\bigg[ e^{36\epsilon \|\TT\|\| \q_{\perp}\|} \bigg]^{\frac{1}{4}} < \infty.
\end{align*}
This proves Lemma \ref{lem: 3q_mgf_equivalence_a}. 
For Lemma \ref{lem: 3q_mgf_equivalence_b}, suppose $\tilTT \in \mathbb C^3$ such that its projection $\TT$ onto the subspace $\mathcal{S}$ satisfies $\TT \in \capTT$. By plugging $X =1$ in Eq. \eqref{eq: 3q_mgf_equi_part1}, and using $\eplim \EEpe\big[ \| \q_{\perp} \|^{4} \big]^{\frac{1}{4}} \EEpe\bigg[ e^{4\epsilon \|\TT'\|\| \q_{\perp}\|} \bigg]^{\frac{1}{4}} <\infty$ (see Definition \ref{def: 3q_ssc}),  we have that for any $\TT\in \capTT$,
\begin{align*}
   \eplim \left| \EEpe[ e^{\epsilon \langle \tilde{\boldsymbol \theta}, \q \rangle}] - \EEpe[ e^{\epsilon ((2\theta_2+\theta_3) q_2 + (\theta_2+2\theta_3) q_3)}] \right| = 0. 
\end{align*}
As a consequence, we also get,
\begin{align*}
    \eplim \left| \EEpe[ e^{\epsilon \langle \tilde{\boldsymbol \theta}, \q \rangle}]\right| \leq \eplim \left| \EEpe[ e^{\epsilon ((2\theta_2+\theta_3) q_2 + (\theta_2+2\theta_3) q_3)}] \right| \leq 1,
\end{align*}
where last inequality follows by using $\TT\in \capTT$. Further, the above argument follows for any $\TT\in \capTT$ as $\capTT\subset \mathcal{S}$ and so, projection of $\TT$ is $\TT$ itself. This proves Lemma \ref{lem: 3q_mgf_equivalence_b}.

Recall that both $\q$ and $\q^+$  follow the steady state distribution and so, 
\begin{align*}
    \EEpe[q_1^+ + q_2^+] - \EEpe[ q_1+q_2] = 0,
\end{align*}
as the drift is zero in steady state. By plugging $\q^+ = \q+ \mathbf a-\mathbf s +\mathbf u$, 
\begin{align}
\label{eq: 3q_unused_epsilon}
     \EEpe[ u_1+u_2] =  \EEpe[ s_1+s_2] -\lambda_1 - \lambda_2  = 1-\lambda_1 - \lambda_2 =  \epsilon,
\end{align}
where the second equality follows because the chosen schedule can either be $(1,0,0)$ or $(0,1,1)$; third equality follows because $\boldsymbol\lambda = (1-\epsilon)\boldsymbol\nu$ where $\boldsymbol \nu \in \mathcal{F}$ as mentioned in Section \ref{sec: 3q_model}. Similarly, $\EEpe[ u_1+u_3] = \epsilon$. By plugging $X = u_2$ in Eq. \eqref{eq: 3q_mgf_equi_part2},
\begin{align}
\label{eq: 3q_mgf_equi_part3}
   &\eplim \frac{1}{\epsilon}\left| \EEpe[ u_2 e^{\epsilon \langle \boldsymbol \theta, \q \rangle}] - \EEpe[u_2 e^{\epsilon ((2\theta_2+\theta_3) q_2 + (\theta_2+2\theta_3) q_3)}] \right| \nonumber \\
   & \ \ \ \  \leq  \eplim 9 \| \boldsymbol \theta \| \EEpe[u_2^2]^{\frac{1}{2}}  \EEpe\big[ \| \q_{\perp} \|^{4} \big]^{\frac{1}{4}} \EEpe\bigg[ e^{36\epsilon \|\TT\|\| \q_{\perp}\|} \bigg]^{\frac{1}{4}} \nonumber \\
   & \ \ \ \ \stackrel{(a)}{\leq} \eplim 9 \sqrt{\epsilon}\| \boldsymbol \theta \| \EEpe\big[ \| \q_{\perp} \|^{4} \big]^{\frac{1}{4}} \EEpe\bigg[ e^{36\epsilon \|\TT\|\| \q_{\perp}\|} \bigg]^{\frac{1}{4}} \nonumber \\
   & \ \ \ \ =0,
\end{align}
where (a) follows using $\EEpe[u_2^{2}]=\EEpe[u_2]\leq \epsilon$. Also, $u_2 = 1$ only if $q_2 = 0$, and so,
\begin{equation*}
    u_2e^{\epsilon ((2\theta_2+\theta_3) q_2 + (\theta_2+2\theta_3) q_3)} = u_2e^{\epsilon  (\theta_2+2\theta_3) q_3}.
\end{equation*}
Combining this with Eq. \eqref{eq: 3q_mgf_equi_part3}, we have
\begin{align*}
    \eplim \frac{1}{\epsilon} \EEpe[ u_2 e^{\epsilon \langle \boldsymbol \theta, \q \rangle}] = \eplim \frac{1}{\epsilon} \EEpe[u_2 e^{\epsilon  (\theta_2+2\theta_3) q_3}],
\end{align*}
and as a consequence,
\begin{align*}
    \eplim \frac{1}{\epsilon} \left|\EEpe[ u_2 e^{\epsilon \langle \boldsymbol \theta, \q \rangle}]\right| = \eplim \frac{1}{\epsilon} \left|\EEpe[u_2 e^{\epsilon  (\theta_2+2\theta_3) q_3}]\right| \stackrel{(a)}{\leq} \eplim \frac{1}{\epsilon} \EEpe[u_2 ] \leq 1,
\end{align*}
where (a) follows by using $Re(\theta_2+2\theta_3)\leq 0$ as $\TT\in \capTT$. The rest of Lemma \ref{lem: 3q_mgf_equivalence_c} follows in the similar manner. \hfill $\blacksquare$
\endproof

\subsection{Proof of Theorem \ref{thm: 3q_functional_eq}}
\label{app: 3q_functional_eq}

\begin{lemma}[Second Order Approximation]
\label{app: 3q_2ndorderapprox}
    Consider the Three-queue system as defined in Section \ref{sec: 3q_model} operating under a policy that achieves state space collapse according the Definition \ref{def: 3q_ssc}. 
\begin{enumerate}[label=(\alph*), ref=\ref{app: 3q_2ndorderapprox}.\alph*]
        \item \label{app: 3q_2ndorderapprox_1} For any $\boldsymbol\theta \in \boldsymbol\Theta $, we have
        \begin{align*}
          \eplim \frac{1}{\epsilon^2}  \EEpe \left[e^{ \epsilon \langle \boldsymbol{\theta}, \mathbf{q}^+ \rangle } \Big( e^{- \epsilon \langle \boldsymbol{\theta}, \mathbf{u} \rangle}  -1 \Big)\right] = -\eplim \frac{1}{\epsilon}  \EEpe \left[ \langle \boldsymbol{\theta}, \mathbf{u} \rangle e^{ \epsilon \langle \boldsymbol{\theta}, \mathbf{q} \rangle } \right].
        \end{align*}
    \item \label{app: 3q_2ndorderapprox_2} For any $\boldsymbol\theta \in \boldsymbol\Theta $, we have
    \begin{align*}
        \eplim \frac{1}{\epsilon^2}  \EEpe \left[  e^{ \epsilon \langle \boldsymbol{\theta}, \mathbf{a }- \mathbf{s } \rangle } - 1\right] = - \frac{1}{2} \langle \boldsymbol \theta, \mathbf 1_3 \rangle + \frac{1}{2} \langle \boldsymbol \theta, \boldsymbol \sigma^2 \boldsymbol \theta \rangle.
    \end{align*}
    \end{enumerate}
\end{lemma}

\proof{\textit{Proof of Lemma \ref{app: 3q_2ndorderapprox}}.} For any $x \in \mathbb C$, we have $\left| e^{x} - x - 1 \right|\leq |x|^2 e^{|x|} $. Using this, we have 
\begin{align}
\label{eq: 3q_unused_first_order}
   \left| \EEpe \left[e^{ \epsilon \langle \boldsymbol{\theta}, \mathbf{q}^+ \rangle } \Big( e^{- \epsilon \langle \boldsymbol{\theta}, \mathbf{u} \rangle}  + \epsilon \langle \boldsymbol{\theta}, \mathbf{u} \rangle -1 \Big)\right] \right| &\leq \EEpe \left[\left|e^{ \epsilon \langle \boldsymbol{\theta}, \mathbf{q}^+ \rangle }\right| \epsilon^2  |\langle \boldsymbol{\theta}, \mathbf{u} \rangle|^2 e^{ \epsilon |\langle \boldsymbol{\theta}, \mathbf{u} \rangle|} \right]\nonumber\\
   & \stackrel{(a)}{\leq} \epsilon^2 \|\boldsymbol\theta \|^2  \EEpe \left[ \|\mathbf u\|^2 e^{\epsilon\|\boldsymbol\theta \|\|\mathbf u\|  } \left|e^{ \epsilon \langle \boldsymbol{\theta}, \mathbf{q}^+ \rangle }\right| \right]\nonumber\\
   & \stackrel{(b)}{\leq} \epsilon^2 \|\boldsymbol\theta \|^2 e^{3\epsilon\|\boldsymbol\theta \|  }  \EEpe \left[ \|\mathbf u\|^2  \left|e^{ \epsilon \langle \boldsymbol{\theta}, \mathbf{q}^+ \rangle }\right| \right]\nonumber\\
   &\stackrel{(c)}{\leq} \epsilon^2 \|\boldsymbol\theta \|^2 e^{3\epsilon\|\boldsymbol\theta \|  }  \sum_{i=1}^3 \EEpe \left[  u_i \big| e^{ \epsilon \langle \boldsymbol{\theta}, \mathbf{q}^+ \rangle} \big| \right],
\end{align}
where (a) follows by using the Cauchy-Schwarz inequality; (b) follows by using $\|u\|\leq 3$ as $u_i$'s are binary; (c) also follows as $u_i$'s are binary. Further,  using $\q^+ = \q + \mathbf a -\mathbf s + \mathbf s$, we have
\begin{align}
\label{eq: 3q_plus_remove}
     \Big| \EEpe \Big[  \epsilon \langle \boldsymbol{\theta}, \mathbf{u} \rangle \Big( e^{ \epsilon \langle \boldsymbol{\theta}, \mathbf{q}^+ \rangle} - e^{ \epsilon \langle \boldsymbol{\theta}, \mathbf{q} \rangle }\Big) \Big] \Big| 
    & \leq \EEpe\Big[ \epsilon \langle \boldsymbol{\theta}, \mathbf{u} \rangle \big|e^{ \epsilon \langle \boldsymbol{\theta}, \mathbf{q} \rangle}\big| \Big( e^{ \epsilon |\langle \boldsymbol{\theta}, \ar - \s +\un \rangle| } - 1 \Big) \Big] \allowdisplaybreaks\nonumber\\
    & \stackrel{(a)}{\leq } 3\epsilon^2 \| \boldsymbol{\theta}\|^2 a_{\max} e^{3\epsilon \| \boldsymbol{\theta}\| a_{\max} } \EEpe\Big[ \|\mathbf u\| \big|e^{ \epsilon \langle \boldsymbol{\theta}, \mathbf{q} \rangle}\big| \Big] \nonumber\\
    & \stackrel{(b)}{\leq } 3\epsilon^2 \| \boldsymbol{\theta}\|^2 a_{\max} e^{3\epsilon \| \boldsymbol{\theta}\| a_{\max} } \sum_{i=1}^3 \EEpe \left[  u_i \big| e^{ \epsilon \langle \boldsymbol{\theta}, \mathbf{q} \rangle} \big| \right],
\end{align}
where (a) follows as $a_i -s_i +u_i \leq a_{\max}$ for all $i\in \{1,2,3\}$ and $|e^{x}-1| \leq |x| e^{|x|}$ for all $x\in \mathbb C$; and (b) follows by as $u_i$'s are binary. 

From Lemma \ref{lem: 3q_mgf_equivalence_c}, we know that $\eplim \frac{1}{\epsilon} \EEpe \left[  u_i \big| e^{ \epsilon \langle \boldsymbol{\theta}, \mathbf{q} \rangle} \big| \right]$ exists and so $\eplim \EEpe \left[  u_i \big| e^{ \epsilon \langle \boldsymbol{\theta}, \mathbf{q} \rangle} \big| \right] =0$. By combining this with \eqref{eq: 3q_plus_remove}, we also have $\eplim \EEpe \left[  u_i \big| e^{ \epsilon \langle \boldsymbol{\theta}, \mathbf{q}^+ \rangle} \big| \right] =0$. Thus,
\begin{align*}
   \eplim \frac{1}{\epsilon^2} & \left| \EEpe \left[e^{ \epsilon \langle \boldsymbol{\theta}, \mathbf{q}^+ \rangle } \Big( e^{- \epsilon \langle \boldsymbol{\theta}, \mathbf{u} \rangle}   -1 \Big)\right] + \EEpe \left[\epsilon \langle \boldsymbol{\theta}, \mathbf{u} \rangle e^{ \epsilon \langle \boldsymbol{\theta}, \mathbf{q} \rangle }   \right] \right| \\
   &\leq \eplim \frac{1}{\epsilon^2} \left| \EEpe \left[e^{ \epsilon \langle \boldsymbol{\theta}, \mathbf{q}^+ \rangle } \Big( e^{- \epsilon \langle \boldsymbol{\theta}, \mathbf{u} \rangle}  + \epsilon \langle \boldsymbol{\theta}, \mathbf{u} \rangle -1 \Big)\right] \right| +  \eplim \frac{1}{\epsilon^2} \Big| \EEpe \Big[  \epsilon \langle \boldsymbol{\theta}, \mathbf{u} \rangle \Big( e^{ \epsilon \langle \boldsymbol{\theta}, \mathbf{q}^+ \rangle} - e^{ \epsilon \langle \boldsymbol{\theta}, \mathbf{q} \rangle }\Big) \Big] \Big| \\
   & \leq \|\boldsymbol\theta \|^2 e^{3\epsilon\|\boldsymbol\theta \|  }  \sum_{i=1}^3 \eplim \EEpe \left[  u_i \big| e^{ \epsilon \langle \boldsymbol{\theta}, \mathbf{q}^+ \rangle} \big| \right] +   3 \| \boldsymbol{\theta}\|^2 a_{\max} e^{3\epsilon \| \boldsymbol{\theta}\| a_{\max} } \sum_{i=1}^3 \eplim \EEpe \left[  u_i \big| e^{ \epsilon \langle \boldsymbol{\theta}, \mathbf{q} \rangle} \big| \right]\\
   & = 0.
\end{align*}
This completes the proof of Lemma \ref{app: 3q_2ndorderapprox_1}.

In order to prove Lemma \ref{app: 3q_2ndorderapprox_2}, we first calculate the mean and variance of $\langle \boldsymbol{\theta}, \mathbf{a }- \mathbf{s } \rangle$.
When the chosen schedule is $(1,0,0)$, we have $\langle \TT,\s\rangle = \theta_1 = \theta_2 + \theta_3 = \frac{1}{2} \langle \boldsymbol \theta, \mathbf 1_3 \rangle$, where the second and third equality follow using $\TT \in \boldsymbol \Theta$. Similarly, when the chosen schedule is $(0,1,1)$, we have $\langle \TT,\s\rangle = \theta_2 + \theta_3=\frac{1}{2} \langle \boldsymbol \theta, \mathbf 1_3 \rangle$.  Thus, from the above two cases, for all $\TT \in \boldsymbol \Theta$, we have 
\begin{align}
\label{eq: 3q_thetadots}
   \langle \TT,\s\rangle = \frac{1}{2} \langle \boldsymbol \theta, \mathbf 1_3 \rangle.
\end{align}
Similarly, 
\begin{align*}
    \EEpe[\langle \TT,\ar\rangle] = \langle \TT,\boldsymbol \lambda\rangle &=  (1-\epsilon) \langle \TT,\boldsymbol \nu\rangle\\
    &\stackrel{(a)}{=} (1-\epsilon) \big((\theta_2+\theta_3)\nu_1 + \theta_2\nu_2+\theta_3\nu_3\big)\\
    & \stackrel{(b)}{=} (1-\epsilon) (\theta_2 + \theta_3)\\
    & = \frac{1}{2}(1-\epsilon) \langle \boldsymbol \theta, \mathbf 1_3 \rangle
\end{align*}
where (a) follows as $\TT \in \mathcal S$ and so $\theta_1 = \theta_2+ \theta_3$; and (b) follows because $\nu_1 + \nu_2 = \nu_1 + \nu_3 = 1$ as $\boldsymbol \nu \in \mathcal F$. Thus,
\begin{align}
\label{eq: 3q_func_eq_mean}
\EEpe [ \langle \boldsymbol{\theta}, \mathbf{a }- \mathbf{s } \rangle ]  = - \frac{1}{2} \epsilon \langle \boldsymbol \theta, \mathbf{1}_3 \rangle.
\end{align}
Next,
\begin{align}
\label{eq: 3q_func_eq_variance}
    \EEpe[ \langle \boldsymbol \theta , \mathbf{a }- \mathbf{s } \rangle^2 ] &= \text{Var} \big(\langle \boldsymbol \theta , \mathbf{a }- \mathbf{s } \rangle\big) + \big(\EEpe[ \langle \boldsymbol \theta , \mathbf{a }- \mathbf{s } \rangle ] \big)^2 \nonumber \allowdisplaybreaks\\
    & \stackrel{(a)}{=} \text{Var}\big(\langle \boldsymbol \theta ,  \ar \rangle\big) + \frac{1}{4}\epsilon^2\langle \boldsymbol \theta, \mathbf 1_3 \rangle^2\nonumber \allowdisplaybreaks\\
    & = \langle \boldsymbol \theta,  \boldsymbol \sigma^2 \boldsymbol \theta \rangle+ \frac{1}{4}\epsilon^2\langle \boldsymbol \theta, \mathbf 1_3 \rangle^2, 
\end{align}
where (a) follows as $\langle \TT , \s\rangle$ is constant. Finally, by using $\left|e^{x} - \frac{1}{2}x^2 - x - 1\right| \leq |x|^3 e^{|x|}$,
\begin{align*}
    \left|\EEpe \left[  e^{ \epsilon \langle \boldsymbol{\theta}, \mathbf{a }- \mathbf{s } \rangle } -  \frac{1}{2} \epsilon^2\langle \boldsymbol \theta , \mathbf{a } - \mathbf{s } \rangle^2 - \epsilon\langle \boldsymbol \theta , \mathbf{a }- \mathbf{s } \rangle - 1\right] \right|
    &\leq \epsilon^3 \EEpe \left[ |\langle \boldsymbol{\theta}, \mathbf{a }- \mathbf{s } \rangle|^3 e^{\epsilon |\langle \boldsymbol{\theta}, \mathbf{a }- \mathbf{s } \rangle|} \right] \\
    &\stackrel{(a)}{\leq} \epsilon^3 \|\TT\|^3 \EEpe \left[ \|\mathbf{a }- \mathbf{s }\|^3 e^{\epsilon \|\TT\| \|\mathbf{a }- \mathbf{s }\| } \right]\\
    &\stackrel{(b)}{\leq} 9\epsilon^3 \|\TT\|^3 a_{\max}^3 e^{3\epsilon \|\TT\| a_{\max} },
\end{align*}
where (a) follows by Cauchy-Schwarz inequality; and (b) follows as arrivals are bounded by $a_{\max}$, $|a_i -s_i| \leq a_{\max} $ and so $\|\mathbf{a }- \mathbf{s } \| \leq 3a_{\max}$.
By combining this with Eq. \eqref{eq: 3q_func_eq_mean} and \eqref{eq: 3q_func_eq_variance}, we have 
\begin{align*}
    \eplim \frac{1}{\epsilon^2} &  \left|\EEpe \left[  e^{ \epsilon \langle \boldsymbol{\theta}, \mathbf{a }- \mathbf{s } \rangle } - 1\right] +  \frac{1}{2} \epsilon^2 \langle \boldsymbol \theta, \mathbf{1}_3 \rangle - \frac{1}{2} \epsilon^2 \langle \boldsymbol \theta,  \boldsymbol \sigma^2 \boldsymbol \theta \rangle  \right| \\
    & \leq \eplim \frac{1}{\epsilon^2} \left|\EEpe \left[  e^{ \epsilon \langle \boldsymbol{\theta}, \mathbf{a }- \mathbf{s } \rangle } -  \frac{1}{2} \epsilon^2\langle \boldsymbol \theta , \mathbf{a } - \mathbf{s } \rangle^2 - \epsilon\langle \boldsymbol \theta , \mathbf{a }- \mathbf{s } \rangle - 1\right] \right| + \eplim \frac{1}{2} \left| \EEpe[ \langle \boldsymbol \theta , \mathbf{a }- \mathbf{s } \rangle^2 ] - \langle \boldsymbol \theta,  \boldsymbol \sigma^2 \boldsymbol \theta \rangle \right| \\
    &\leq \eplim 9\epsilon \|\TT\|^3 a_{\max}^3 e^{3\epsilon \|\TT\| a_{\max} } + \eplim \frac{1}{8} \epsilon^2 |\langle \boldsymbol \theta, \mathbf{1}_3\rangle |^2\\
    &= 0.
\end{align*}
This proves Lemma \ref{app: 3q_2ndorderapprox_2}. \hfill $\blacksquare$
\endproof

Now, we are equipped to provide the proof of Theorem \ref{thm: 3q_functional_eq}.

\proof{\textit{Proof of Theorem \ref{thm: 3q_functional_eq}}.}
For any $\TT\in \capTT^\epsilon$, using $\q^+ = \q + \mathbf a -\mathbf s + \mathbf s$, we have
\begin{align}
\label{eq: 3q_fun_eq_theo_lhs}
  \EEpe \Big[  e^{ \epsilon \langle \boldsymbol{\theta}, \mathbf{q}^+ \rangle } \Big( e^{- \epsilon \langle \boldsymbol{\theta}, \mathbf{u} \rangle}  -1 \Big) \Big] 
  & = \EEpe \Big[ e^{ \epsilon \langle \boldsymbol{\theta}, \mathbf{q} \rangle } e^{ \epsilon \langle \boldsymbol{\theta}, \mathbf{a }- \mathbf{s } \rangle } \Big] - \EEpe \Big[ e^{ \epsilon \langle \boldsymbol{\theta}, \mathbf{q}^+ \rangle}\Big] \allowdisplaybreaks \nonumber \\
  & \stackrel{(a)}{=} \EEpe \Big[ e^{ \epsilon \langle \boldsymbol{\theta}, \mathbf{q} \rangle } \Big] \EEpe \Big[ e^{ \epsilon \langle \boldsymbol{\theta}, \mathbf{a }- \mathbf{s } \rangle } \Big] - \EEpe \Big[ e^{ \epsilon \langle \boldsymbol{\theta}, \mathbf{q}^+ \rangle}\Big] \allowdisplaybreaks \nonumber \\
  & \stackrel{(b)}{=}  \EEpe \Big[ e^{ \epsilon \langle \boldsymbol{\theta}, \mathbf{q} \rangle } \Big] \Bigg( \EEpe \Big[ e^{ \epsilon \langle \boldsymbol{\theta}, \mathbf{a }- \mathbf{s } \rangle } \Big] - 1\Bigg),
\end{align}
where (a) follows as the arrivals are independent of the queue length vector and $\langle \TT,\s\rangle = \frac{1}{2}\langle \TT, \mathbf 1_3 \rangle $ is independent of $\q$ from Eq. \eqref{eq: 3q_thetadots}; and (b) holds as $\q$ and $\q^{+}$ both follow steady state distribution $\pi_\epsilon$ and $\left|\EEpe \Big[ e^{ \epsilon \langle \boldsymbol{\theta}, \mathbf{q} \rangle } \Big]\right| < \infty $ for any $\TT\in \capTT^\epsilon$. 
Also, $\eplim \capTT^\epsilon = \capTT$, and thus, by taking $\epsilon \rightarrow 0$ in Eq. \eqref{eq: 3q_fun_eq_theo_lhs}, for all $\TT\in \capTT$, we have
\begin{align}
\label{eq: 3q_onestepbefore_func_eq}
    \eplim \frac{1}{\epsilon^2} \EEpe \Big[  e^{ \epsilon \langle \boldsymbol{\theta}, \mathbf{q}^+ \rangle } \Big( e^{- \epsilon \langle \boldsymbol{\theta}, \mathbf{u} \rangle}  -1 \Big) \Big] = \eplim \frac{1}{\epsilon^2} \EEpe \Big[ e^{ \epsilon \langle \boldsymbol{\theta}, \mathbf{q} \rangle } \Big] \Bigg( \EEpe \Big[ e^{ \epsilon \langle \boldsymbol{\theta}, \mathbf{a }- \mathbf{s } \rangle } \Big] - 1\Bigg).
\end{align}
From Lemma \ref{app: 3q_2ndorderapprox_1}, we have
\begin{equation}
\label{eq: 3q_marginal_m}
    \lim_{\epsilon \rightarrow 0} \frac{1}{\epsilon^2} \EEpe \Big[e^{ \epsilon \langle \boldsymbol{\theta}, \mathbf{q}^+ \rangle } \Big( e^{- \epsilon \langle \boldsymbol{\theta}, \mathbf{u} \rangle}  -1 \Big) \Big] =  - \Bigg \langle  \boldsymbol{\theta}, \lim_{\epsilon \rightarrow 0} \frac{1}{\epsilon} \EEpe \Big[  \mathbf{u} e^{ \epsilon \langle \boldsymbol{\theta}, \mathbf{q} \rangle} \Big] \Bigg \rangle =  - \langle \boldsymbol{\theta}, \mathbf{M}(\boldsymbol{\theta}) \rangle.
\end{equation}
Plugging this in the Eq. \eqref{eq: 3q_onestepbefore_func_eq}, using Lemma \ref{app: 3q_2ndorderapprox_2}, and the fact that $\lim_{\epsilon \rightarrow 0} \EEpe \Big[ e^{ \epsilon \langle \boldsymbol{\theta}, \mathbf{q} \rangle } \Big]$ exists by Lemma \ref{lem: 3q_mgf_equivalence} presented in Appendix \ref{app: 3q_mgf_equivalence}, we get that for any $\boldsymbol \theta \in \boldsymbol \Theta$,
\begin{align*}
    - \langle \boldsymbol{\theta}, \mathbf{M}(\boldsymbol{\theta}) \rangle
   & = \Big( - \frac{1}{2} \langle \boldsymbol \theta, \mathbf 1_3 \rangle + \frac{1}{2} \langle \boldsymbol \theta, \boldsymbol \sigma^2 \boldsymbol \theta \rangle \Big) \lim_{\epsilon \rightarrow 0} \EEpe \Big[ e^{ \epsilon \langle \boldsymbol{\theta}, \mathbf{q} \rangle } \Big] \\
   & = \left( - \frac{1}{2} \langle \boldsymbol \theta, \mathbf 1_3 \rangle + \frac{1}{2} \langle \boldsymbol \theta, \boldsymbol \sigma^2 \boldsymbol \theta \rangle \right) L(\boldsymbol{\theta}),
\end{align*}
where, 
\begin{align*}
    L(\boldsymbol \theta) = \lim_{\epsilon \rightarrow 0} \EEpe[e^{\epsilon \langle \boldsymbol \theta, \q \rangle}], && M_2(\boldsymbol \theta)= \lim_{\epsilon \rightarrow 0} \frac{1}{\epsilon} \EEpe [u_2e^{\epsilon \langle \boldsymbol \theta, \q \rangle}], && M_3(\boldsymbol \theta)= \lim_{\epsilon \rightarrow 0} \frac{1}{\epsilon} \EEpe [u_3e^{\epsilon \langle \boldsymbol \theta, \q \rangle}].
\end{align*}
Note that $M_1(\boldsymbol\theta)=0$ because $u_1=0$ by the definition of the service process.
This gives us the functional equation in Eq. \eqref{eq: 3q_functional_eq}. \hfill $\blacksquare$
\endproof

\subsection{Proof of  Lemma \ref{lem: 3q_analytic} and Lemma \ref{lem: 3q_uniqueness}}
\label{app: 3q_uniqueness}

\proof{\textit{Proof of Lemma \ref{lem: 3q_analytic}}.}
It is easy to observe that the function $L(\TT)$, $M_2(\TT)$ and $M_3(\TT)$ are non-zero simply by substituting $\TT =\mathbf 0$ to get that $L(\mathbf 0) =1$, and similarly, $M_2(\mathbf 0) =1$ and $M_3(\mathbf 0)=1$ by using the fact that $\EEpe[u_2] = \EEpe[u_3] =\epsilon$ from Eq. \eqref{eq: 3q_unused_epsilon}.

In order to prove the remaining of Lemma \ref{lem: 3q_uniqueness}, we are going to use Lemma \ref{lem: functional_uniqueness}. Note that, by using Lemma \ref{lem: 3q_mgf_equivalence_b}, we have that the absolute value of $L(\TT)$, $M_2(\TT)$ and $M_3(\TT)$ is bounded. Next, by using Lemma \ref{lem: 3q_mgf_equivalence_b},
\begin{align*}
    L(\boldsymbol \theta) = \lim_{\epsilon \rightarrow 0} \EEpe[e^{\epsilon \langle \boldsymbol \theta, \q \rangle}] = \lim_{\epsilon \rightarrow 0} \EEpe\left[ e^{\epsilon( (2\theta_2 + \theta_3) q_2+  (\theta_2 + 2\theta_3) q_3)}\right],
\end{align*}
and by Lemma \ref{lem: 3q_mgf_equivalence_c},
\begin{align*}
    M_2(\boldsymbol \theta)= \lim_{\epsilon \rightarrow 0} \frac{1}{\epsilon} \EEpe [u_2e^{\epsilon \langle \boldsymbol \theta, \q \rangle}] = \lim_{\epsilon \rightarrow 0} \frac{1}{\epsilon} \EEpe [u_2e^{\epsilon (\theta_2 + 2\theta_3) q_3}],\\ M_3(\boldsymbol \theta)= \lim_{\epsilon \rightarrow 0} \frac{1}{\epsilon} \EEpe [u_3e^{\epsilon \langle \boldsymbol \theta, \q \rangle}] = \lim_{\epsilon \rightarrow 0} \frac{1}{\epsilon} \EEpe [u_3e^{\epsilon(2\theta_2 + \theta_3)  q_2}].
\end{align*}
By using the result in \cite{Hurtado-gen-switch-SIGMETRICS}, we know that the first order terms, $\EEpe[\epsilon q_2]$, $\EEpe[\epsilon q_3]$, $\EEpe[u_3 q_2]$ and $\EEpe[u_2 q_3]$ exists as $\epsilon \rightarrow 0$. As such, the functions $L(\TT)$, $M_2(\TT)$ and $M_3(\TT)$ are complex differentiable in the set $\capTT$. Thus, the functions $L(\TT)$, $M_2(\TT)$ and $M_3(\TT)$ are holomorphic in the interior of $\capTT$. The continuity of $L(\TT)$, $M_2(\TT)$ and $M_3(\TT)$ over $\capTT$, (including the boundary) again follows by the same argument.

\endproof

\proof{\textit{Proof of Lemma \ref{lem: 3q_uniqueness}}.}
In order to prove Lemma \ref{lem: 3q_uniqueness}, we are going to use Lemma \ref{lem: functional_uniqueness}. First, note that by using Lemma \ref{lem: 3q_mgf_equivalence_b},
\begin{align*}
    L(\boldsymbol \theta) = \lim_{\epsilon \rightarrow 0} \EEpe[e^{\epsilon \langle \boldsymbol \theta, \q \rangle}] = \lim_{\epsilon \rightarrow 0} \EEpe\left[ e^{\epsilon( (2\theta_2 + \theta_3) q_2+  (\theta_2 + 2\theta_3) q_3)}\right],
\end{align*}
and by Lemma \ref{lem: 3q_mgf_equivalence_c},
\begin{align*}
    M_2(\boldsymbol \theta)= \lim_{\epsilon \rightarrow 0} \frac{1}{\epsilon} \EEpe [u_2e^{\epsilon \langle \boldsymbol \theta, \q \rangle}] = \lim_{\epsilon \rightarrow 0} \frac{1}{\epsilon} \EEpe [u_2e^{\epsilon (\theta_2 + 2\theta_3) q_3}],\\ M_3(\boldsymbol \theta)= \lim_{\epsilon \rightarrow 0} \frac{1}{\epsilon} \EEpe [u_3e^{\epsilon \langle \boldsymbol \theta, \q \rangle}] = \lim_{\epsilon \rightarrow 0} \frac{1}{\epsilon} \EEpe [u_3e^{\epsilon(2\theta_2 + \theta_3)  q_2}].
\end{align*}
We do a linear transform of the variable $\boldsymbol \theta$ so that the Laplace transform $M_2(\cdot)$ and $M_3(\cdot)$ depends only on one variable. We pick $\psi_1 = 2\theta_2+\theta_3$ and $\psi_2 = \theta_2+2\theta_3$, i.e., $\boldsymbol \psi = (\psi_1,\psi_2) = \mathbf B^T \boldsymbol \theta$ (as $\TT\in \capTT  $). Thus, $\theta_2 = \frac{1}{3} (2\psi_1 - \psi_2)$ and $\theta_3 =\frac{1}{3} ( 2\psi_2 - \psi_1)$. 
With slight abuse of notation, we replace $L(\boldsymbol \theta)$, $M_2(\boldsymbol \theta) $ and $M_3(\boldsymbol \theta)$ with $L(\boldsymbol \psi)$, $M_2(\psi_2)$ and  $M_3(\psi_1)$ respectively. 
By using this notation,
\begin{align*}
    L(\boldsymbol \psi) = \lim_{\epsilon \rightarrow 0} \EEpe[e^{\epsilon (\psi_1 q_2 + \psi_2 q_3)}], && M_2(\psi_2) = \lim_{\epsilon \rightarrow 0} \frac{1}{\epsilon} \EEpe [u_2e^{\epsilon \psi_2 q_3}], && M_3(\psi_1) = \lim_{\epsilon \rightarrow 0} \frac{1}{\epsilon} \EEpe [u_3e^{\epsilon \psi_1 q_3}].
\end{align*}
Further, by using Lemma \ref{lem: 3q_analytic}, we have that $L(\boldsymbol \psi)$, $M_2(\psi_2)$ and  $M_3(\psi_1)$ are holomorphic, continuous and bounded, for all $\boldsymbol \psi \in \boldsymbol \Psi $, where
\[\boldsymbol\Psi =\mathbf B^T \capTT = \{ \boldsymbol \psi \in \mathbb C^2: Re(\boldsymbol \psi ) \leq \mathbf 0_2\}. \]
The functional equation can be rewritten as,
    \begin{equation}
    \label{eq: 3q_rewritten_func_eq}
       \Tilde{\mathcal{P}}(\boldsymbol\psi):= \left( - \frac{1}{3} \langle \boldsymbol{\psi},\mathbf{1}_2 \rangle + \frac{1}{2} \langle \boldsymbol{\psi} , \boldsymbol \Gamma \boldsymbol{\psi} \rangle \right) L(\boldsymbol{\psi}) + 
        \frac{1}{3} (2\psi_1 - \psi_2)M_2(\psi_2) +\frac{1}{3} (2\psi_2 - \psi_1)M_3(\psi_1)  = 0,
    \end{equation}
where $\boldsymbol \Gamma =\mathbf D^{-1}  \mathbf{B}^T\boldsymbol \sigma^2 \mathbf{B} \mathbf D^{-1} $. Note that, here we get $\boldsymbol  \sigma^2 = \mathbf{B}\boldsymbol \Gamma \mathbf{B}^T$.

As the functional equation in Eq. \eqref{eq: 3q_functional_eq}, that is $\mathcal{P}(\TT)=0$, holds for any $\boldsymbol \theta \in \boldsymbol \Theta$, so the rewritten functional equation in Eq. \eqref{eq: 3q_rewritten_func_eq}, that is $\Tilde{\mathcal{P}}(\boldsymbol\psi)=0$, holds for any $\boldsymbol \psi \in \boldsymbol \Psi$. 
Also, as one can easily check, the conditions in Eq. \eqref{eq: funceqconditions} in Lemma \ref{lem: functional_uniqueness} holds for the functional equation in \eqref{eq: 3q_rewritten_func_eq}.
Thus, the functional equation in Eq. \eqref{eq: 3q_rewritten_func_eq} satisfies the conditions mentioned in Lemma \ref{lem: functional_uniqueness} and so, the functional equation in Eq. \eqref{eq: 3q_rewritten_func_eq} has a unique solution. This in turn implies that functional equation for Three-queue system has a unique solution. \hfill $\blacksquare$
\endproof

\subsection{Proof of Theorem \ref{thm: 3q_dist}}
\label{app: 3q_dist}

\proof{\textit{Proof of Theorem \ref{thm: 3q_dist}}.}
The Laplace transform of the considered distribution, for any $\TT \in \boldsymbol \Theta$, is given by 
\begin{align*}
    \mathbb{E}[e^{\theta_1 (\Upsilon_1+\Upsilon_2)+\theta_2 \Upsilon_1 + \theta_3 \Upsilon_2 }]  &= \mathbb{E}[e^{(2\theta_2 + \theta_3) \Upsilon_1+(\theta_2 + 2\theta_3)\Upsilon_2}] \\
    &= \frac{1}{\bigg( 1- (2\theta_2 + \theta_3) \frac{3\sigma_2^2 + \sigma_3^2}{8}\bigg)\bigg( 1- (\theta_2 + 2\theta_3)\frac{\sigma_2^2 + 3\sigma_3^2}{8}\bigg)}.
\end{align*}
Now pick
\begin{align}
\label{eq: 3q_laplacesolution}
    L(\TT) = \frac{1}{\bigg( 1- (2\theta_2+\theta_3) \frac{3\sigma_2^2 + \sigma_3^2}{8}\bigg)\bigg( 1- (\theta_2+2\theta_3)\frac{\sigma_2^2 + 3\sigma_3^2}{8}\bigg)},\nonumber\\ M_2 (\TT)  = \frac{1}{1- (\theta_2+2\theta_3)\frac{\sigma_2^2 + 3\sigma_3^2}{8}}, && M_3 (\TT) = \frac{1}{1- (2\theta_2+\theta_3)\frac{3\sigma_2^2 + \sigma_3^2}{8}}.
\end{align}
For this to satisfy the functional equation given in Eq. \eqref{eq: 3q_functional_eq}, we need
\begin{equation*}
     \left( -\frac{1}{2}\langle \boldsymbol \theta ,\mathbf{1} \rangle + \frac{1}{2} \langle \TT , \boldsymbol \sigma^2 \TT \rangle \right)  + \theta_2 \bigg( 1- (2\theta_2+\theta_3)\frac{3\sigma_2^2 + \sigma_3^2}{8}\bigg)+\theta_3 \bigg( 1- (\theta_2+2\theta_3)\frac{\sigma_2^2 + 3\sigma_3^2}{8}\bigg)  = 0,
\end{equation*}
which can be simplified to the condition,
\begin{align}
\label{eq: 3q_provefunctional}
     4 \langle \TT , \boldsymbol \sigma^2 \TT \rangle  &= \theta_2  (2\theta_2+\theta_3)(3\sigma_2^2 + \sigma_3^2)+\theta_3  (\theta_2+2\theta_3)(\sigma_2^2 + 3\sigma_3^2)\nonumber\\
     &=2 \theta_2^2 ( 3\sigma_2^2+\sigma_3^2) +2\theta_3^2( \sigma_2^2+3\sigma_3^2) + 4\theta_2\theta_3 (\sigma_2^2+\sigma_3^2).
\end{align}
Further, by using the condition $\sigma_1^2 = \frac{\sigma_2^2 +\sigma_3^2}{2}$ and $\theta_1 = \theta_2+\theta_3$, we have 
\begin{align}
\label{eq: 3q_provefunctionalterm1}
    \langle \TT , \boldsymbol \sigma^2 \TT \rangle  &= \begin{bmatrix}
    \theta_2 + \theta_3& \theta_2 & \theta_3
    \end{bmatrix} 
    \begin{bmatrix}
    \frac{\sigma_2^2 +\sigma_3^2}{2} & 0 & 0\\
    0& \sigma_2^2 & 0\\
    0 & 0 & \sigma_3^2
    \end{bmatrix} 
        \begin{bmatrix}
    \theta_2+ \theta_3 \\ \theta_2 \\ \theta_3 
    \end{bmatrix} \nonumber \allowdisplaybreaks\\
    &=\theta_2^2 \Big( \frac{3\sigma_2^2+\sigma_3^2}{2} \Big) +\theta_3^2 \Big( \frac{\sigma_2^2+3\sigma_3^2}{2} \Big) + \theta_2\theta_3 (\sigma_2^2+\sigma_3^2).
\end{align}
From Eq. \eqref{eq: 3q_provefunctionalterm1}, we can easily observe that Eq. \eqref{eq: 3q_provefunctional} is satisfied. Thus, $L(\TT)$, $M_2(\TT)$ and $M_3(\TT)$ given in Eq. \eqref{eq: 3q_laplacesolution} solves the functional equation given in Eq. \eqref{eq: 3q_functional_eq}.  Further, it is easy observe that the choice of $L(\TT)$, $M_2(\TT)$ and $M_3(\TT)$ given in Eq. \eqref{eq: 3q_laplacesolution} lie in the set $\mathcal{L}(\TT)$, as they are rational functions with poles outside $\TT$. Then, from Lemma \ref{lem: 3q_uniqueness}, we get that the solution given by Eq. \eqref{eq: 3q_laplacesolution} is unique and so $L(\TT)$ in Eq. \eqref{eq: 3q_laplacesolution} gives the Laplace transform of the limiting distribution for the Three-queue system. Further, $L(\TT)$ matches with the Laplace transform of $\mathbf B \boldsymbol \Upsilon$ for any $\TT \in \boldsymbol \Theta$, that is 
\begin{align*}
    L(\TT) = \EE \big[e^{\langle \TT,\mathbf B \boldsymbol \Upsilon\rangle}\big], \ \forall \TT\in\capTT.
\end{align*}
Now, the result follows from Proposition \ref{prop: 3q_mgf_convergence}. This completes the proof. \hfill $\blacksquare$
\endproof

\section{Proof of Results for Input-queued switch}
\label{app: switch}

\subsection{Properties of the projection}
\label{sec: switch_projection}
\begin{lemma}
    \label{lem: switch_projection}
     Let $\mathbf B \in \{0,1\}^{n^2\times2n}$ is such that for any $1\leq i,j\leq n$,  
    \begin{equation*}
        B_{i+n(j-1),i} =B_{i+n(j-1),n+j} = 1, 
    \end{equation*}
    and all other elements are zero. 
    Let $\mathbf x \in \mathbb{C}^{n^2}$ and suppose $\mathbf x_{\|}$ denotes the projection of $\mathbf{x}$ onto the space $\mathcal{S}$ where 
\begin{equation*}
    \mathcal{S} = \Big\{ \mathbf{x} \in \mathbb{R}^{n^2} : \exists \mathbf{w} \in \mathbb{R}^{2n} \ s.t. \ \mathbf x = \mathbf 
    B \mathbf w\Big\}. 
\end{equation*}
    And $\mathbf x_{\perp} = \mathbf x - \mathbf x_{\|}$.
    Further, define $\mathbf{D} = \mathbf{B}^T\mathbf{B}$. Then,
    \begin{enumerate}
    \item The matrix $\mathbf B$ satisfies 
    \begin{align*}
    \mathbf{B}\begin{bmatrix} \mathbf{1}_n \\\mathbf{0}_n \end{bmatrix} = \mathbf{B}\begin{bmatrix} \mathbf{0}_n \\ \mathbf{1}_n \end{bmatrix} = \mathbf{1}_{n^2}, && \mathbf{B}^T \mathbf{1}_{n^2} = n\mathbf{1}_{2n}.
    \end{align*}
    This also gives us that
    \begin{align*}
    \sum_{i=1}^n \mathbf d_i =\mathbf{B}^T\mathbf{B}\begin{bmatrix} \mathbf{1}_n \\\mathbf{0}_n \end{bmatrix}=  n\mathbf 1_{2n}, &&\sum_{j=1}^n \mathbf d_{n+j} =\mathbf{B}^T\mathbf{B}\begin{bmatrix} \mathbf{0}_n \\\mathbf{1}_n \end{bmatrix}=  n\mathbf 1_{2n}, &&
        \sum_{i=1}^{2n}\mathbf d_i =   2n \mathbf 1_{2n},
    \end{align*}
    where $\{\mathbf d_1, \dots , \mathbf d_{2n}\}$ are columns of $ \mathbf{D}$.
\item For any $\boldsymbol \phi \in \mathbb{C}^{2n}$, the vector $\boldsymbol \theta = \mathbf B \boldsymbol \phi$ lies in the space $\mathcal{S}$. Also, for any $\mathbf x \in \mathbb{C}^{n^2}$, and suppose $\mathbf w \in \mathbb{C}^{2n}$ is such that $\mathbf x_{\|} =\mathbf B\mathbf{w}$. Then,
     \begin{equation*}
        \langle \boldsymbol \theta, \mathbf{x} \rangle = \langle \boldsymbol \theta, \mathbf{x}_{\|} \rangle = \boldsymbol \phi^T\mathbf{B}^T\mathbf{B} \mathbf w = \sum_{i=1}^{2n} \langle \boldsymbol \phi , \mathbf d_i \rangle w_i.
    \end{equation*}
    \end{enumerate}
\end{lemma}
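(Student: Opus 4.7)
The three parts are direct consequences of the sparsity of $\mathbf{B}$. The key observation is that row $i+n(j-1)$ of $\mathbf{B}$ has exactly two nonzero entries, at columns $i$ and $n+j$, and dually each column of $\mathbf{B}$ contains exactly $n$ ones: column $i\leq n$ has ones at rows $i+n(j-1)$ for $j=1,\dots,n$, and column $n+j$ has ones at rows $i+n(j-1)$ for $i=1,\dots,n$. Every assertion in the lemma will follow from this sparsity pattern plus basic projection theory.

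For part 1, I would compute entrywise: $\bigl(\mathbf{B}\bigl[\mathbf{1}_n^T,\mathbf{0}_n^T\bigr]^T\bigr)_{i+n(j-1)} = 1+0 = 1$, so this product equals $\mathbf{1}_{n^2}$, and the companion identity with $\bigl[\mathbf{0}_n^T,\mathbf{1}_n^T\bigr]^T$ follows by the same computation. The column-count statement gives $\mathbf{B}^T \mathbf{1}_{n^2} = n\mathbf{1}_{2n}$. The three sums of columns of $\mathbf{D}$ then follow immediately by writing each as $\mathbf{D}\mathbf{v} = \mathbf{B}^T(\mathbf{B}\mathbf{v})$ for the indicator vector $\mathbf{v}$ appearing in the statement and combining the two identities just established.

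For part 2, $\mathcal{S}$ is by construction the column span of $\mathbf{B}$ and $\mathbf{x}_{\|}$ is its orthogonal projection; the standard projector formula $\mathbf{A}=\mathbf{B}(\mathbf{B}^T\mathbf{B})^{-1}\mathbf{B}^T$ thus delivers $\mathbf{A}\mathbf{x}=\mathbf{x}_{\|}$. For part 3, $\boldsymbol{\theta}=\mathbf{B}\boldsymbol{\phi}$ is in $\mathcal{S}$ by definition, so $\langle \boldsymbol{\theta},\mathbf{x}_{\perp}\rangle = 0$ and hence $\langle \boldsymbol{\theta},\mathbf{x}\rangle = \langle \boldsymbol{\theta},\mathbf{x}_{\|}\rangle$. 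Substituting $\mathbf{x}_{\|}=\mathbf{B}\mathbf{w}$ and using bilinearity,
\begin{equation*}
\langle \boldsymbol{\theta},\mathbf{x}_{\|}\rangle \;=\; \boldsymbol{\phi}^T\mathbf{B}^T\mathbf{B}\mathbf{w} \;=\; \boldsymbol{\phi}^T\mathbf{D}\mathbf{w} \;=\; \sum_{i=1}^{2n}\bigl(\boldsymbol{\phi}^T\mathbf{d}_i\bigr)w_i \;=\; \sum_{i=1}^{2n}\langle \boldsymbol{\phi},\mathbf{d}_i\rangle w_i,
\end{equation*}
where the last equality uses that $\mathbf{d}_i$ is real, so the conjugation in the paper's inner-product convention is trivial.

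The only genuine subtlety is that $\mathbf{B}$ has rank $2n-1$ rather than $2n$---as the paper itself notes, $[\mathbf{1}_n^T,-\mathbf{1}_n^T]^T$ lies in the kernel of $\mathbf{B}$---so $\mathbf{B}^T\mathbf{B}$ is singular and $(\mathbf{B}^T\mathbf{B})^{-1}$ is not literally defined. I would address this either by reading $(\cdot)^{-1}$ as the Moore--Penrose pseudoinverse, under which $\mathbf{B}(\mathbf{B}^T\mathbf{B})^{+}\mathbf{B}^T$ is still the orthogonal projector onto $\mathcal{S}$, or equivalently by restricting to the $(2n-1)$-column submatrix of $\mathbf{B}$ obtained by dropping one redundant column (which is consistent with the paper's earlier remark that one coordinate of $\mathbf{w}$ can be fixed to zero). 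Either route justifies the formula in part 2 and leaves parts 1 and 3 untouched; everything else is routine linear algebra driven by the two-ones-per-row structure of $\mathbf{B}$.
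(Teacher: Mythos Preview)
Your proposal is correct and follows essentially the same approach as the paper: part 1 by direct entrywise computation from the sparsity pattern of $\mathbf{B}$, part 2 by the standard projector formula, and part 3 by the orthogonality $\langle \boldsymbol{\theta},\mathbf{x}_{\perp}\rangle=0$ followed by expanding $\boldsymbol{\phi}^T\mathbf{B}^T\mathbf{B}\mathbf{w}$. The paper's proof is terser but identical in substance. Your observation that $\mathbf{B}^T\mathbf{B}$ is singular and that $(\cdot)^{-1}$ must be read as a pseudoinverse (or that one should drop a redundant column) is a genuine point the paper does not address in its proof; it is worth keeping.
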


Lemma \ref{lem: switch_projection} provides the properties of the projection of any vector onto the space $\mathcal{S}$. 

 \proof{\textit{Proof of Lemma \ref{lem: switch_projection}}}
  Part 1 follows directly from the structure of the matrix $\mathbf B$. For Part 2, note that $\mathbf x_{\perp}$ is perpendicular to the subspace $\mathcal S$. Then, as $\boldsymbol \theta \in \mathcal{S}$, $\langle \boldsymbol \theta, \mathbf{x}_{\perp} \rangle =0$ and so,
 \begin{align*}
      \langle \boldsymbol \theta, \mathbf{x} \rangle &= \langle \boldsymbol \theta, \mathbf{x} - \mathbf{x}_\perp \rangle =\langle \boldsymbol \theta,  \mathbf{x}_\| \rangle = \boldsymbol \phi^T\mathbf{B}^T\mathbf{B} \mathbf w = \sum_{i=1}^{2n} \langle \boldsymbol \phi , \mathbf d_i \rangle w_i,
 \end{align*}
 where third equality follows as $\boldsymbol \theta = \mathbf B \boldsymbol \phi$ and $\mathbf x_{\|} =\mathbf B\mathbf{w}$; and the last equality follows by using the definition of $\mathbf D$. \hfill $\blacksquare$
\endproof

\subsection{Required Lemma}
\label{app: switch_mgf_equivalence}

Before presenting the results for the Input-queued switch, we present a required lemma as given below. Recall that $\TT\in \capTT$, where 
\[\capTT = \{\TT\in \mathbb C^{n^2}: \TT\in \mathcal S, \ Re(\mathbf B^T\TT) \leq \mathbf 0_{2n} \}.\]
As $\TT\in \mathcal{S}$ for any $\TT\in \capTT$, we have that there exists $\boldsymbol\phi \in \mathbb C^{2n}$ such that $\TT = \mathbf B \boldsymbol \phi$. Suppose
\[\boldsymbol\Phi = \{\boldsymbol \phi\in \mathbb C^{2n}: Re(\langle \mathbf d_i, \boldsymbol \phi \rangle \leq 0, \ \forall 1\leq i\leq 2n)\}, \]
where $\mathbf d_i$'s are the columns of the matrix $\mathbf D = \mathbf B^T \mathbf B$.
Then, for any $\boldsymbol \phi \in \boldsymbol \Phi$, we have that $\boldsymbol\theta = \mathbf B \boldsymbol \phi \in \boldsymbol \Theta$. Conversely, for any $\TT\in\capTT$, there exists $\boldsymbol \phi \in \boldsymbol \Phi$ such that $\TT = \mathbf B \boldsymbol \phi $. 
So, for the ease of notations, we use $\TT$ and $\boldsymbol \phi$ interchangeably, where $\boldsymbol \phi \in \boldsymbol \Phi$ and $\TT =\mathbf B \boldsymbol \phi \in \capTT $.

\begin{lemma}
\label{lem: switch_mgf_equivalence}
Consider the Input-queued switch system as defined in Section \ref{sec: switch_model} operating under scheduling policy that achieves state space collapse according to the Definition \ref{def: switch_ssc}.
\begin{enumerate}[label=(\alph*), ref=\ref{lem: switch_mgf_equivalence}.\alph*]
    \item \label{lem: switch_mgf_equivalence_a} For any $\epsilon< \epsilon_0$ and $\TT\in \boldsymbol \Theta^\epsilon = \boldsymbol \Theta \cap \{\TT \in \mathbb C^3: \|\TT\| \leq \theta_0/4\epsilon\}$, we have 
    \begin{align*}
        \big| \EEpe  \big[ e^{\epsilon \langle \TT , \q \rangle} \big] \big| < \infty.
    \end{align*}
    \item \label{lem: switch_mgf_equivalence_b} Suppose $\tilde{\boldsymbol \theta} \in \mathbb{C}^3$, let $\boldsymbol \theta$ be its projection onto the space $\mathcal{S}$ such that $\boldsymbol \theta = \mathbf B \boldsymbol \phi \in \boldsymbol \Theta $, where $\boldsymbol \phi \in \boldsymbol \Phi$. Also, let $\mathbf r \in \mathbb R^{2n}_{+}$ is such that $\q_{\| \mathcal{ K}} = \mathbf B \mathbf r$. Then, we have
    \begin{align*}
        \lim_{\epsilon\rightarrow 0}  \big| \EEpe  \big[ e^{\epsilon \langle \tilTT , \q \rangle} \big] \big| \leq 1, && \lim_{\epsilon \rightarrow 0}\EEpe[e^{\epsilon \langle \tilde{\boldsymbol \theta}, \q \rangle}] = \lim_{\epsilon \rightarrow 0} \EEpe[e^{\epsilon \langle \boldsymbol \theta, \q \rangle}] = \lim_{\epsilon \rightarrow 0} \EEpe[e^{\epsilon \sum_{l=1}^{2n} \langle \boldsymbol \phi,\mathbf d_l \rangle r_l}].
    \end{align*}
    \item \label{lem: switch_mgf_equivalence_c} Suppose $k =i+n(j-1)$ for  $i,j\in \{1,2,\dots,n\}$. For any $\boldsymbol \theta \in \boldsymbol \Theta$, such that $\TT = \mathbf B \boldsymbol \phi$, we have 
    \begin{align*}
        \lim_{\epsilon \rightarrow 0} \frac{1}{\epsilon} \big|\EEpe[u_k e^{\epsilon \langle \TT , \q \rangle}] \big|\leq 1, && \lim_{\epsilon \rightarrow 0} \frac{1}{\epsilon}\EEpe [u_k e^{\epsilon \langle \boldsymbol \theta, \q \rangle}] 
    =  \lim_{\epsilon \rightarrow 0}\frac{1}{\epsilon} \mathbb{E}[u_k e^{\epsilon \sum_{l=1, l\neq i,l\neq n+j}^{2n} \langle \boldsymbol \phi,\mathbf d_l \rangle r_l}].
    \end{align*}
\end{enumerate}
\end{lemma}

Lemma \ref{lem: 3q_mgf_equivalence_a} provides the existence of the Laplace transform for any $\TT\in\capTT^\epsilon$, which is a necessary to perform the Lyapunov drift analysis. Further, the existence of limiting quantities in Lemma \ref{lem: 3q_mgf_equivalence_b} and Lemma \ref{lem: 3q_mgf_equivalence_c} is essential to establish the  functional equation.

\proof{\textit{Proof of Lemma \ref{lem: switch_mgf_equivalence}}.}
 As $\mathcal{S}$ is a linear subspace, suppose $\tilde{\boldsymbol \theta} = \boldsymbol \theta +\boldsymbol \theta_{\perp}$. Also, suppose $\mathbf r \in \mathbb R^{2n}_{+}$ is such that $\q_{\| \mathcal{ K}} = \mathbf B \mathbf r$. Then, 
\begin{align}
\label{eq: switch_theta_relation}
  \langle \tilde{\boldsymbol \theta}, \q \rangle &= \langle \tilTT , \q_{\|\mathcal K} \rangle  + \langle \tilTT , \q_{\perp \mathcal K} \rangle \nonumber\\
  &= \langle \TT , \q_{\|\mathcal K} \rangle  + \langle \TT_{\perp} , \q_{\|\mathcal K} \rangle+ \langle \tilTT , \q_{\perp \mathcal K} \rangle \nonumber\\
  &\stackrel{(a)}{=} \langle \TT , \q_{\|\mathcal K} \rangle  + \langle \tilTT , \q_{\perp \mathcal K} \rangle \nonumber\\
  & \stackrel{(b)}{=} \sum_{i=1}^{2n} \langle \boldsymbol \phi,\mathbf d_i \rangle r_i + \langle \tilTT , \q_{\perp \mathcal K} \rangle,
\end{align}
where (a) follows because $\q_{\|\mathcal K} \in \mathcal{ K} \subset \mathcal S$ and $\TT_{\perp}$ is orthogonal to the subspace $\mathcal{S}$ and so, $\langle \TT_{\perp} , \q_{\|\mathcal K} \rangle =0$, and (b) follows by using $\q_{\| \mathcal{ K}} = \mathbf B \mathbf r$ and $\boldsymbol \theta = \mathbf B \boldsymbol \phi$.

Now, suppose $X$ is a random variable, which is measurable with respect to the probability measure $\pi_\epsilon$. Then, by using similar argument as in Eq. \eqref{eq: 3q_mgf_equi_part1}, 
\begin{align}
    \label{eq: switch_mgf_equi_part1}
        & \left| \EEpe[ X e^{\epsilon \langle \tilde{\boldsymbol \theta}, \q \rangle}] - \EEpe[X e^{\epsilon \sum_{l=1}^{2n} \langle \boldsymbol \phi,\mathbf d_l \rangle r_l}] \right|  \leq \epsilon \| \tilTT \| \EEpe[|X|^2]^{\frac{1}{2}}  \EEpe\big[ \| \q_{\perp} \|^{4} \big]^{\frac{1}{4}} \EEpe\bigg[ e^{4\epsilon \|\tilTT\|\| \q_{\perp}\|} \bigg]^{\frac{1}{4}}.
\end{align}
By replacing $\tilTT$ with $\TT\in \boldsymbol\Theta$ in the above equation, we have
\begin{align}
    \label{eq: switch_mgf_equi_part2}
        & \left| \EEpe[ X e^{\epsilon \langle \TT, \q \rangle}] - \EEpe[X e^{\epsilon \sum_{l=1}^{2n} \langle \boldsymbol \phi,\mathbf d_l \rangle r_l}] \right|  \leq \epsilon \| \TT \| \EEpe[|X|^2]^{\frac{1}{2}}  \EEpe\big[ \| \q_{\perp} \|^{4} \big]^{\frac{1}{4}} \EEpe\bigg[ e^{4\epsilon \|\TT\|\| \q_{\perp}\|} \bigg]^{\frac{1}{4}}.
\end{align}
From Definition \ref{def: switch_ssc}, for any $\epsilon<\epsilon_0$ and $\|\TT\|< \theta_0/4\epsilon$, we know
\begin{align}
\label{eq: switch_mgf_equiv_qperp}
    \EEpe\big[ \| \q_{\perp} \|^{4} \big]^{\frac{1}{4}} \EEpe\bigg[ e^{4\epsilon \|\TT\|\| \q_{\perp}\|} \bigg]^{\frac{1}{4}} < \infty.
\end{align}
Thus, from Eq. \eqref{eq: switch_mgf_equi_part2}, by substituting $X=1$, for any $\TT \in \boldsymbol \Theta^\epsilon$, we have 
\begin{align*}
    \left| \EEpe[ e^{\epsilon \langle \boldsymbol \theta, \q \rangle}] \right|& \leq \left| \EEpe[ e^{\epsilon \sum_{l=1}^{2n} \langle \boldsymbol \phi,\mathbf d_l \rangle r_l}] \right| + \epsilon \| \boldsymbol \theta \|  \EEpe\big[ \| \q_{\perp} \|^{4} \big]^{\frac{1}{4}} \EEpe\bigg[ e^{4\epsilon \|\TT\|\| \q_{\perp}\|} \bigg]^{\frac{1}{4}} < \infty.
\end{align*}
This proves Lemma \ref{lem: switch_mgf_equivalence_a}.

For Lemma \ref{lem: switch_mgf_equivalence_b}, suppose $\tilTT \in \mathbb C^{n^2}$ such that its projection $\TT$ onto the subspace $\mathcal{S}$ satisfies $\TT \in \capTT$. By plugging $X =1$ in Eq. \eqref{eq: switch_mgf_equi_part1}, and using $\eplim \EEpe\big[ \| \q_{\perp} \|^{4} \big]^{\frac{1}{4}} \EEpe\bigg[ e^{4\epsilon \|\tilTT\|\| \q_{\perp}\|} \bigg]^{\frac{1}{4}} <\infty$ (see Definition \ref{def: switch_ssc}),  we have that for any $\TT\in \capTT$,
\begin{align*}
   \eplim \left| \EEpe[ e^{\epsilon \langle \tilde{\boldsymbol \theta}, \q \rangle}] - \EEpe[ e^{\epsilon \sum_{l=1}^{2n} \langle \boldsymbol \phi,\mathbf d_l \rangle r_l}] \right| = 0. 
\end{align*}
As a consequence, we also get,
\begin{align*}
    \eplim \left| \EEpe[ e^{\epsilon \langle \tilde{\boldsymbol \theta}, \q \rangle}]\right| \leq \eplim \left| \EEpe[ e^{\epsilon \sum_{l=1}^{2n} \langle \boldsymbol \phi,\mathbf d_l \rangle r_l}] \right| \leq 1,
\end{align*}
where last inequality follows by using $\boldsymbol \phi \in \boldsymbol \Phi$ and so $Re(\langle \boldsymbol \phi,\mathbf d_l \rangle) \leq 0$ for all $l \in \{1,2,\dots,n^2\}$. Further, the above argument follows for any $\TT\in \capTT$ as $\capTT\subset \mathcal{S}$ and so, projection of $\TT$ is $\TT$ itself. This proves Lemma \ref{lem: switch_mgf_equivalence_b}.

Suppose $k = i+n(j-1)$ where $i,j\in\{1,2,\dots, n\}$. Also, if $u_k = 1$ then $q_k = 0$, which implies that $q_{\perp \mathcal K, i+n(j-1)}+ r_i +r_{n+j} = 0$. Now, as $r_i \geq 0$ and $r_{n+j} \geq 0$, we get
\begin{align*}
        |r_i| \leq |\q_{\perp \mathcal K, i+n(j-1)}|\leq \|\q_{\perp \mathcal K}\|, && |r_{n+j}| \leq |\q_{\perp \mathcal K, i+n(j-1)}|\leq \|\q_{\perp \mathcal K}\|.
\end{align*}
This gives us that,
\begin{align*}
    |\langle \TT, \q_{\perp \mathcal K} \rangle + \langle \boldsymbol \phi,\mathbf d_i \rangle r_i +\langle \boldsymbol \phi,\mathbf d_{n+j} \rangle r_{n+j}| \leq \big( \| \TT\|  + |\langle \boldsymbol \phi,\mathbf d_i \rangle| + |\langle \boldsymbol \phi,\mathbf d_{n+j} \rangle| \big) \| \q_{\perp \mathcal K} \| = \theta'\| \q_{\perp \mathcal K} \|,
    \end{align*}
where $\theta' = \| \TT\|  + |\langle \boldsymbol \phi,\mathbf d_i \rangle| + |\langle \boldsymbol \phi,\mathbf d_{n+j} \rangle|$. 
Using similar argument as in Eq. \eqref{eq: 3q_mgf_equi_part1}, we have  
\begin{align*}
    \frac{1}{\epsilon}\Big| \EEpe[u_k e^{\epsilon \langle \TT, \q \rangle}] & - \EEpe[u_k e^{\epsilon \sum_{l=1,  l\neq i,l\neq n+j}^{2n} \langle \boldsymbol \phi,\mathbf d_l \rangle r_l}] \Big| \\
    &=\frac{1}{\epsilon}\EEpe\Big[u_k \left|e^{\epsilon \sum_{l=1,l\neq i,l\neq n+j}^{2n} \langle \boldsymbol \phi,\mathbf d_l \rangle r_l } \right| \big|  \big( 1- e^{\epsilon (\langle \|\TT\|, \q_{\perp \mathcal K} \rangle + \langle \boldsymbol \phi,\mathbf d_i \rangle r_i +\langle \boldsymbol \phi,\mathbf d_{n+j} \rangle r_{n+j})}   \big) \big|\Big]  \nonumber \allowdisplaybreaks\\
    &\stackrel{(a)}{\leq} \frac{1}{\epsilon}\EEpe\Big[u_k \big|   1- e^{\epsilon (\langle \|\TT\|, \q_{\perp \mathcal K} \rangle + \langle \boldsymbol \phi,\mathbf d_i \rangle r_i +\langle \boldsymbol \phi,\mathbf d_{n+j} \rangle r_{n+j})}   \big|\Big]  \nonumber \allowdisplaybreaks\\
        &\stackrel{(b)}{\leq} \frac{1}{\epsilon}\EEpe\Big[u_k \big(  e^{\epsilon \theta' \|\q_{\perp \mathcal K}\|} -1   \big)\Big]  \nonumber \allowdisplaybreaks\\
        & \stackrel{(c)}{\leq} \EEpe\bigg[ u_k \theta' \|\q_{\perp \mathcal K}\| e^{\epsilon \theta' \|\q_{\perp \mathcal K}\|} \bigg] \nonumber \allowdisplaybreaks \\
        & \leq \EEpe[u_k^{2}]^{\frac{1}{2}} \EEpe\bigg[  (\theta' \|\q_{\perp \mathcal K}\|)^2 e^{2\epsilon \theta' \|\q_{\perp \mathcal K}\|} \bigg]^{\frac{1}{2}} \nonumber \allowdisplaybreaks \\
        & \leq \sqrt{\epsilon} \theta' \EEpe\big[ \| \q_{\perp \mathcal K} \|^{4} \big]^{\frac{1}{4}} \EEpe\bigg[ e^{4\epsilon \theta'\| \q_{\perp \mathcal K}\|} \bigg]^{\frac{1}{4}},
    \end{align*}
    where (a) follows by using $Re(\langle \mathbf{d}_i, \boldsymbol \phi \rangle ) \leq 0$ for all $i \in \{1,2,\dots,2n\}$ for any $\boldsymbol \phi \in \boldsymbol \Phi $; (b) follows as $\| \TT\|  + |\langle \boldsymbol \phi,\mathbf d_i \rangle| + |\langle \boldsymbol \phi,\mathbf d_{n+j} \rangle| = \theta'$; (c) holds because $|e^x-1| \leq |x|e^{|x|}$ for any $x\in \mathbb{C}$; and last two inequalities follow by Cauchy-Schwarz inequality and using $\EEpe[u_k^{2}]=\EEpe[u_k]\leq \epsilon$ (similar argument as in Eq. \eqref{eq: 3q_unused_epsilon}). Now, by using Definition \ref{def: switch_ssc},
\begin{align*}
    \lim_{\epsilon\rightarrow 0} \EEpe \big[ \| \q_{\perp \mathcal K}\|^4 \big] < \infty, &&  \lim_{\epsilon\rightarrow 0}\EEpe \big[ e^{4\epsilon \theta'\| \q_{\perp \mathcal K}\| }\big] < \infty.
\end{align*}
Combining these with the above argument gives us,
\begin{align*}
    \eplim \frac{1}{\epsilon}\Big| \EEpe[u_k e^{\epsilon \langle \TT, \q \rangle}] & - \EEpe[u_k e^{\epsilon \sum_{l=1,  l\neq i,l\neq n+j}^{2n} \langle \boldsymbol \phi,\mathbf d_l \rangle r_l}] \Big| = \eplim \sqrt{\epsilon} \theta' \EEpe\big[ \| \q_{\perp \mathcal K} \|^{4} \big]^{\frac{1}{4}} \mathbb{E}\bigg[ e^{4\epsilon \theta'\| \q_{\perp \mathcal K}\|} \bigg]^{\frac{1}{4}} = 0.
\end{align*}
As a consequence, we also get,
\begin{align*}
    \eplim \frac{1}{\epsilon}\Big| \EEpe[u_k e^{\epsilon \langle \TT, \q \rangle}] \Big| = \eplim  \frac{1}{\epsilon}\Big| \EEpe[u_k e^{\epsilon \sum_{l=1,  l\neq i,l\neq n+j}^{2n} \langle \boldsymbol \phi,\mathbf d_l \rangle r_l}] \Big| \leq 1.
\end{align*}
This completes the proof of Lemma \ref{lem: switch_mgf_equivalence_c}.
\hfill $\blacksquare$

\endproof

\subsection{Proof of Theorem \ref{thm: switch_functional_eq}}
\label{app: switch_functional_eq}

\begin{lemma}[Second Order Approximation]
\label{lem: switch_2ndorderapprox}
    Consider the Input-queued switch system as defined in Section \ref{sec: switch_model} operating under a policy that achieves state space collapse according the Definition \ref{def: switch_ssc}. 
\begin{enumerate}[label=(\alph*), ref=\ref{lem: switch_2ndorderapprox}.\alph*]
        \item \label{lem: switch_2ndorderapprox_1} For any $\boldsymbol\theta \in \boldsymbol\Theta $, we have
        \begin{align*}
          \eplim \frac{1}{\epsilon^2}  \EEpe \left[e^{ \epsilon \langle \boldsymbol{\theta}, \mathbf{q}^+ \rangle } \Big( e^{- \epsilon \langle \boldsymbol{\theta}, \mathbf{u} \rangle}  -1 \Big)\right] = -\eplim \frac{1}{\epsilon}  \EEpe \left[ \langle \boldsymbol{\theta}, \mathbf{u} \rangle e^{ \epsilon \langle \boldsymbol{\theta}, \mathbf{q} \rangle } \right].
        \end{align*}
    \item \label{lem: switch_2ndorderapprox_2} For any $\boldsymbol\theta \in \boldsymbol\Theta $, we have
    \begin{align*}
        \eplim \frac{1}{\epsilon^2}  \EEpe \left[  e^{ \epsilon \langle \boldsymbol{\theta}, \mathbf{a }- \mathbf{s } \rangle } - 1\right] = - \frac{1}{n} \langle \boldsymbol \theta, \mathbf 1_{n^2} \rangle + \frac{1}{2} \langle \boldsymbol \theta, \boldsymbol \sigma^2 \boldsymbol \theta \rangle.
    \end{align*}
    \end{enumerate}
\end{lemma}

The proof of Lemma \ref{lem: switch_2ndorderapprox} follows on very similar lines as the proof of Lemma \ref{app: 3q_2ndorderapprox}. One thing to note is that in Lemma \ref{lem: switch_2ndorderapprox_2}, we use $\langle \boldsymbol \nu, \TT \rangle = \frac{1}{n} \langle \boldsymbol \theta, \mathbf 1_{n^2} \rangle$ for any $\boldsymbol\nu \in \mathcal{F}$, and this also implies that $\langle \mathbf s, \TT \rangle = \frac{1}{n} \langle \boldsymbol \theta, \mathbf 1_{n^2} \rangle$ for any $\mathbf s \in \mathcal{X}$ as $\mathcal{X}\subset \mathcal{F}$.  For the sake of brevity, we skip the proof of Lemma \ref{lem: switch_2ndorderapprox}.

\proof{\textit{Proof of Theorem \ref{thm: switch_functional_eq}}.} 
Using similar arguments as in Eq. \eqref{eq: 3q_fun_eq_theo_lhs} and \eqref{eq: 3q_onestepbefore_func_eq} in combination with the results from Lemma \ref{lem: switch_mgf_equivalence}, we have that for any $\TT \in \capTT$,
\begin{align}
\label{eq: switch_onestepbefore_func_eq}
    \eplim \frac{1}{\epsilon^2} \EEpe \Big[  e^{ \epsilon \langle \boldsymbol{\theta}, \mathbf{q}^+ \rangle } \Big( e^{- \epsilon \langle \boldsymbol{\theta}, \mathbf{u} \rangle}  -1 \Big) \Big] = \eplim \frac{1}{\epsilon^2} \EEpe \Big[ e^{ \epsilon \langle \boldsymbol{\theta}, \mathbf{q} \rangle } \Big] \Bigg( \EEpe \Big[ e^{ \epsilon \langle \boldsymbol{\theta}, \mathbf{a }- \mathbf{s } \rangle } \Big] - 1\Bigg).
\end{align}
From Lemma \ref{lem: switch_2ndorderapprox_1}, we have
\begin{equation}
\label{eq: switch_marginal_m}
    \lim_{\epsilon \rightarrow 0} \frac{1}{\epsilon^2} \EEpe \Big[e^{ \epsilon \langle \boldsymbol{\theta}, \mathbf{q}^+ \rangle } \Big( e^{- \epsilon \langle \boldsymbol{\theta}, \mathbf{u} \rangle}  -1 \Big) \Big] =  - \Bigg \langle  \boldsymbol{\theta}, \lim_{\epsilon \rightarrow 0} \frac{1}{\epsilon} \EEpe \Big[  \mathbf{u} e^{ \epsilon \langle \boldsymbol{\theta}, \mathbf{q} \rangle} \Big] \Bigg \rangle =  - \langle \boldsymbol{\theta}, \mathbf{M}(\boldsymbol{\theta}) \rangle.
\end{equation}
and from Lemma \ref{lem: switch_2ndorderapprox_2}, we have 
\begin{align}
\label{eq: switch_arrival_2ndorder}
    \eplim \frac{1}{\epsilon^2}\EEpe \Big[ e^{ \epsilon \langle \boldsymbol{\theta}, \mathbf{a }- \mathbf{s } \rangle } -1 \Big]  = - \frac{1}{n} \langle \boldsymbol \theta, \mathbf 1_{n^2} \rangle + \frac{1}{2} \langle \boldsymbol \theta, \boldsymbol \sigma^2 \boldsymbol \theta \rangle.
\end{align}
Plugging these in the Eq. \eqref{eq: switch_onestepbefore_func_eq}, and using the fact that $\lim_{\epsilon \rightarrow 0} \EEpe \left[ e^{ \epsilon \langle \boldsymbol{\theta}, \mathbf{q} \rangle } \right]$ exists by Lemma \ref{lem: switch_mgf_equivalence}, we get that for any $\boldsymbol \theta \in \boldsymbol \Theta$,
\begin{align*}
    - \langle \boldsymbol{\theta}, \mathbf{M}(\boldsymbol{\theta}) \rangle
   & = \Big( - \frac{1}{n} \langle \boldsymbol \theta, \mathbf 1_{n^2} \rangle + \frac{1}{2} \langle \boldsymbol \theta, \boldsymbol \sigma^2 \boldsymbol \theta \rangle \Big) \lim_{\epsilon \rightarrow 0} \EEpe \Big[ e^{ \epsilon \langle \boldsymbol{\theta}, \mathbf{q} \rangle } \Big] \\
   & = \left( - \frac{1}{n} \langle \boldsymbol \theta, \mathbf 1_{n^2} \rangle + \frac{1}{2} \langle \boldsymbol \theta, \boldsymbol \sigma^2 \boldsymbol \theta \rangle \right) L(\boldsymbol{\theta}),
\end{align*}
where, 
\begin{align*}
    L(\boldsymbol \theta) = \lim_{\epsilon \rightarrow 0} \EEpe[e^{\epsilon \langle \boldsymbol \theta, \q \rangle}], && M_k(\boldsymbol \theta) = \lim_{\epsilon \rightarrow 0} \frac{1}{\epsilon} \EEpe [u_k e^{\epsilon \langle \boldsymbol \theta, \q \rangle}], \ \forall k\in \{1,2,\dots,n^2\}
\end{align*}
This gives us the functional equation in Eq. \eqref{eq: switch_functional_eq}. \hfill $\blacksquare$
\endproof

\subsection{Proof of Theorem \ref{thm: switch_dist}}
\label{app: switch_dist}
 Recall that $\TT\in \capTT$, where 
\[\capTT = \{\TT\in \mathbb C^{n^2}: \TT\in \mathcal S, \ Re(\mathbf B^T\TT) \leq \mathbf 0_{2n} \}.\]
As $\TT\in \mathcal{S}$ for any $\TT\in \capTT$, we have that there exists $\boldsymbol\phi \in \mathbb C^{2n}$ such that $\TT = \mathbf B \boldsymbol \phi$. Suppose
\[\boldsymbol\Phi = \{\boldsymbol \phi\in \mathbb C^{2n}: Re(\langle \mathbf d_i, \boldsymbol \phi \rangle \leq 0, \ \forall 1\leq i\leq 2n)\}, \]
where $\mathbf d_i$'s are the columns of the matrix $\mathbf D = \mathbf B^T \mathbf B$.
Then, for any $\boldsymbol \phi \in \boldsymbol \Phi$, we have that $\boldsymbol\theta = \mathbf B \boldsymbol \phi \in \boldsymbol \Theta$. Conversely, for any $\TT\in\capTT$, there exists $\boldsymbol \phi \in \boldsymbol \Phi$ such that $\TT = \mathbf B \boldsymbol \phi $. 
So, for the ease of notations, we use $\TT$ and $\boldsymbol \phi$ interchangeably for the proof of Theorem \ref{thm: switch_dist}, where $\boldsymbol \phi \in \boldsymbol \Phi$ and $\TT =\mathbf B \boldsymbol \phi \in \capTT $.

\proof{\textit{Proof for Theorem \ref{thm: switch_dist}}.}
Using Lemma \ref{lem: switch_projection}, and as $\boldsymbol \theta = \mathbf{B} \boldsymbol \phi$,
 \begin{equation*}
       \epsilon \langle \boldsymbol \theta, \q\rangle \stackrel{d}{\rightarrow} \sum_{i=1}^{2n} \langle \boldsymbol \phi , \mathbf d_{i} \rangle (\Upsilon_i - \Tilde{\Upsilon} ),
 \end{equation*} 
where $\Tilde{\Upsilon}  = \min_{1\leq k\leq 2n} \Upsilon_k $.
 For any $i$ and $j \neq i$, due to the strong memoryless property of exponential random variables, $\{\Upsilon_{j} - \Upsilon_{i} | \Tilde{\Upsilon} = \Upsilon_{i}\}$ is an exponential random variable with mean $\frac{\sigma^2}{2}$. Also, for any $j$ and $k$, $\{\Upsilon_{j} - \Upsilon_{i} | \Tilde{\Upsilon} = \Upsilon_{i}\}$ and $\{\Upsilon_{k} - \Upsilon_{i} | \Tilde{\Upsilon} = \Upsilon_{i}\}$ are independent of each other. And as $\{\Upsilon_1,\dots,\Upsilon_{2n}\}$ are independent and identically distributed, $\mathbb{P}(\Tilde{\Upsilon} = \Upsilon_{i}) = \frac{1}{2n}$ for any $i$. Thus, the Laplace transform of $\mathbf B (\boldsymbol \Upsilon - \tilde{\Upsilon} \mathbf 1_{2n})$ is given by 
 \begin{align*}
      \mathbb{E}[ e^{\sum_{j=1}^{2n} \langle \boldsymbol \phi , \mathbf d_{j} \rangle (\Upsilon_j - \Tilde{\Upsilon} )}]
      & = \sum_{i=1}^{2n} \mathbb{P}(\Tilde{\Upsilon} = \Upsilon_{i}) \mathbb{E}[ e^{\sum_{j=1}^{2n} \langle \boldsymbol \phi , \mathbf d_{j} \rangle (\Upsilon_j - \Tilde{\Upsilon} )} | \Tilde{\Upsilon} = \Upsilon_{i}] \allowdisplaybreaks\\
      & = \sum_{i=1}^{2n} \mathbb{P}(\Tilde{\Upsilon} = \Upsilon_{i}) \prod_{j\neq i} \mathbb{E}[ e^{\langle \boldsymbol \phi , \mathbf d_{j} \rangle (\Upsilon_j - \Tilde{\Upsilon} )} | \Tilde{\Upsilon} = \Upsilon_{i}]\allowdisplaybreaks\\
      & = \sum_{i=1}^{2n} \frac{1}{2n} \times \frac{ 1- \langle \boldsymbol \phi , \mathbf d_{i} \rangle \frac{\sigma^2}{2} }{ \prod_{j}\big( 1- \langle \boldsymbol \phi , \mathbf d_{j} \rangle \frac{\sigma^2}{2}\big)}\\
      & = \frac{ 1 - \langle \boldsymbol \phi , \mathbf 1_{2n}\rangle \frac{\sigma^2}{2}\ }{\prod_{j}\big( 1- \langle \boldsymbol \phi , \mathbf d_{j} \rangle \frac{\sigma^2}{2}\big)}.
 \end{align*}
 Thus, for any $i,j \in \{1,\dots,n\}$
 \begin{align}
 \label{eq: switch_laplacesolution}
     L(\boldsymbol \theta) = \frac{ 1 - \langle \boldsymbol \phi , \mathbf 1_{2n}\rangle \frac{\sigma^2}{2}\ }{\prod_{j}\big( 1- \langle \boldsymbol \phi , \mathbf d_{j} \rangle \frac{\sigma^2}{2}\big)}, && M_{i+n(j-1)}(\boldsymbol \theta)= \frac{\big( 1- \langle \boldsymbol \phi , \mathbf d_{i} \rangle \frac{\sigma^2}{2}\big)\times\big( 1- \langle \boldsymbol \phi , \mathbf d_{n+j} \rangle \frac{\sigma^2}{2}\big)}{n\prod_{k}\big( 1- \langle \boldsymbol \phi , \mathbf d_{k} \rangle \frac{\sigma^2}{2}\big)}.
 \end{align}
 Now, for this to satisfy the functional equation given in Eq. \eqref{eq: switch_functional_eq}, we need,
 \begin{align}
 \label{eq: switch_provefunctional}
     \Big( -\frac{1}{n}\langle \boldsymbol{\theta}, \mathbf{1}_{n^2} \rangle + &\frac{1}{2} \langle \boldsymbol{\theta} ,  \boldsymbol\sigma^2\boldsymbol{\theta} \rangle \Big) \left(1 - \langle \boldsymbol \phi , \mathbf 1_{2n}\rangle \frac{\sigma^2}{2}\right) \nonumber\\
     &= - \frac{1}{n}\sum_{i=1}^n\sum_{j=1}^n \theta_{i+n(j-1)} \times\left( 1- \langle \boldsymbol \phi , \mathbf d_{i} \rangle \frac{\sigma^2}{2}\right)\times\left( 1- \langle \boldsymbol \phi , \mathbf d_{n+j} \rangle \frac{\sigma^2}{2}\right).
 \end{align}
 Under the symmetric variance condition, that is $\boldsymbol \sigma^2 = \sigma^2 \mathbf{I}_{n^2}$, we have $\langle \boldsymbol{\theta} ,  \boldsymbol\sigma^2\boldsymbol{\theta} \rangle = \sigma^2 \langle\boldsymbol{\theta}, \boldsymbol{\theta} \rangle $
 Further, the RHS in the Eq. \eqref{eq: switch_provefunctional} can be simplified as follows,
 \begin{align*}
    \frac{1}{n} \sum_{i=1}^n\sum_{j=1}^n &\theta_{i+n(j-1)} \times \left( 1- \langle \boldsymbol \phi , \mathbf d_{i} \rangle \frac{\sigma^2}{2}\right) \times\left( 1- \langle \boldsymbol \phi , \mathbf d_{n+j} \rangle \frac{\sigma^2}{2}\right)\\ 
    &\stackrel{(a)}{=} \frac{1}{n}\sum_{i=1}^n\sum_{j=1}^n (\phi_{i} +\phi_{n+j}) \times \left( 1- \langle \boldsymbol \phi , \mathbf d_{i} \rangle \frac{\sigma^2}{2}\right)\times\left( 1- \langle \boldsymbol \phi , \mathbf d_{n+j} \rangle \frac{\sigma^2}{2}\right)\allowdisplaybreaks\\
     & = \frac{1}{n}\sum_{i=1}^n \phi_{i}\left( 1- \langle \boldsymbol \phi , \mathbf d_{i} \rangle \frac{\sigma^2}{2}\right)\sum_{j=1}^n\left( 1- \langle \boldsymbol \phi , \mathbf d_{n+j} \rangle \frac{\sigma^2}{2}\right)\allowdisplaybreaks\\
     & \quad \quad+\frac{1}{n}\sum_{j=1}^n \phi_{n+j}\left( 1- \langle \boldsymbol \phi , \mathbf d_{n+j} \rangle \frac{\sigma^2}{2}\right)\sum_{i=1}^n\left( 1- \langle \boldsymbol \phi , \mathbf d_{i} \rangle \frac{\sigma^2}{2}\right)\allowdisplaybreaks\\
     & \stackrel{(b)}{=} \sum_{i} \phi_{i}\left( 1- \langle \boldsymbol \phi , \mathbf d_{i} \rangle \frac{\sigma^2}{2}\right)\left( 1- \langle \boldsymbol \phi , \mathbf 1_{2n} \rangle \frac{\sigma^2}{2}\right)+ \sum_{j} \phi_{j}\left( 1- \langle \boldsymbol \phi , \mathbf d_{j} \rangle \frac{\sigma^2}{2}\right)\left( 1- \langle \boldsymbol \phi , \mathbf 1_{2n} \rangle\frac{\sigma^2}{2}\right)\allowdisplaybreaks\\
     & = \left( 1- \langle \boldsymbol \phi , \mathbf 1_{2n} \rangle \frac{\sigma^2}{2}\right)\sum_{i=1}^{2n} \phi_{i}\left( 1- \langle \boldsymbol \phi , \mathbf d_{i} \rangle \frac{\sigma^2}{2}\right)\allowdisplaybreaks\\
     & = \left( 1- \langle \boldsymbol \phi , \mathbf 1_{2n} \rangle \frac{\sigma^2}{2}\right) \left( \langle \boldsymbol \phi , \mathbf 1_{2n} \rangle -\frac{\sigma^2}{2} \sum_{i=1}^{2n}  \langle \boldsymbol \phi , \mathbf d_{i} \rangle \phi_i \right)\allowdisplaybreaks\\
     & \stackrel{(c)}{=} \left( 1- \langle \boldsymbol \phi , \mathbf 1_{2n} \rangle \frac{\sigma^2}{2}\right) \left( \frac{1}{n} \langle \TT , \mathbf 1_{n^2} \rangle -\frac{\sigma^2}{2}   \langle \boldsymbol \theta, \boldsymbol \theta \rangle\right),
 \end{align*}
 where (a) follows by using $\TT = \mathbf B \boldsymbol \phi $ and so $\theta_{i+n(j-1)} = \phi_i + \phi_{n+j}$; (b) follows by using $\sum_{i=1}^n \langle \boldsymbol \phi , \mathbf d_{i} \rangle = \sum_{j=1}^n \langle \boldsymbol \phi , \mathbf d_{n+j} \rangle = \langle \boldsymbol \phi , \mathbf 1_{2n} \rangle $; and (c) follows by using $\langle \boldsymbol \phi , \mathbf 1_{2n} \rangle = \frac{1}{n} \langle \boldsymbol \theta , \mathbf 1_{n^2} \rangle $ and \[\sum_{i=1}^{2n}  \langle \boldsymbol \phi , \mathbf d_{i} \rangle \phi_i =\boldsymbol \phi^T \mathbf D \boldsymbol \phi = \boldsymbol \phi^T \mathbf B^T \mathbf B \boldsymbol \phi = \langle \mathbf B \boldsymbol \phi , \mathbf B \boldsymbol \phi \rangle = \langle \TT,\TT \rangle.\]
This gives us that Eq. \eqref{eq: switch_provefunctional} is satisfied for any $\TT\in \capTT$. This shows that $L(\boldsymbol \theta) $ and $\mathbf M (\boldsymbol \theta)$ given in Eq. \eqref{eq: switch_laplacesolution} is a solution of the functional equation. Now, under the assumption that Conjecture \ref{lem: switch_uniqueness} holds true, we get that Eq. \eqref{eq: switch_laplacesolution} gives the unique solution to the functional equation given by Eq. \eqref{eq: switch_functional_eq}. Now, the result follows by using Proposition \ref{prop: switch_mgf_convergence}. This completes the proof. \hfill $\blacksquare$
\endproof

\section{Proofs for \nsys}
\label{app: n_sys}

\subsection{Proof of State Space Collapse under  MaxWeight}
\label{app: n_sys_ssc}

\proof{\textit{Proof}.}
The proof is based on the arguments provided in 
\cite[Lemma 10]{wang2022heavy}.
Consider the Lyapunov function $V_1(\q) = (q_2-q_1)\mathbf{1}_{\{q_2>q_1\}}$.  Suppose $G$  is the infinitesimal generator matrix of the underlying CTMC, then we define the drift of the Lyapunov function $V_1(\mathbf q) $ by,
\begin{equation*}
    \Delta V_1(\mathbf q) \overset{\Delta}{=}  \sum_{\mathbf q'} G(\mathbf q,\mathbf q') \big[V_1(\mathbf q') - V_1(\mathbf q)\big].
\end{equation*}
For the $\mathcal{N}$-system presented in Section \ref{sec: n_sys_model}, under the condition $V_1(\q) \geq 2$, we have $q_2 \geq q_1 +2$ and possible transitions for the queue length vector:
\begin{align*}
    \q' =\begin{cases}
    (q_1+1, q_2) \quad \text{ with rate } \lambda_1, \\
    (q_1,q_2+1) \quad \text{ with rate } \lambda_2,\\
      (q_1,q_2-1) \quad \text{ with rate } \mu_1+\mu_2.
    \end{cases}
\end{align*} 
In the first case, the transition is due to an arrival to $Q_1$ which increases $q_1$ by $1$ and the rate of this transition is $\lambda_1$. In the second case,  the transition is due to an arrival to $Q_2$ which increases $q_2$ by $1$ and the rate of this transition is $\lambda_2$. Finally, the third case due to the service to the second queue. Note that as $q_2 >q_1$, only the second queue is served by both the servers. In this case, $q_2$ decreases by $1$ and the rate of this transition is $\mu_1 + \mu_2$.
Also, note that in all three cases we still satisfy the condition that second queue is greater than the first queue, that is, $q_2' > q_1'$. Thus, under the condition $V(\q) \geq 2$, $V(q') = (q_2'-q_1')$. 
Then, we have 
\begin{align*}
    \Delta V_1(\q) &= \lambda_1 V(q_1+1,q_2) + \lambda_2 V(q_1,q_2+1) + (\mu_1 + \mu_2) V(q_1,q_2-1) - (\lambda_1 + \lambda_1 - \mu_1 - \mu_2)V(q_1,q_2) \\
    &= -\lambda_1 +\lambda_2 - (\mu_1 + \mu_2) \\
    &\stackrel{(a)}{=} -2\mu_1 + \epsilon (2\mu_1 - \gamma \mu_1 -\gamma \mu_2) \\
    &\stackrel{(b)}{\leq} -\mu_1
\end{align*}
where (a) follows by using the Eq. \eqref{eq: n_sys_arrival_vector} and (b) easily follows whenever $2\mu_1 - \gamma \mu_1 -\gamma \mu_2 \leq 0$ or $2\mu_1 - \gamma \mu_1 -\gamma \mu_2> 0$ and  $\epsilon \leq \frac{1}{\mu_1} (2\mu_1 - \gamma \mu_1 -\gamma \mu_2)$. This fulfils the first requirement in \cite[Lemma 10]{wang2022heavy}. For the second condition, note that any transition will change the queue length of exactly one queue by exactly 1 (either increase or decrease). So, for any transition, $V(\q)$ can change by atmost 1. And finally, the third condition in \cite[Lemma 10]{wang2022heavy} is satisfied because all the transition rates are finite. Then, from the result of \cite[Lemma 10]{wang2022heavy}, we get that MaxWeight achieves state space collapse. \hfill $\blacksquare$
\endproof

\subsection{Required Lemma}
Recall that $\pi^\epsilon$ denotes the steady state distribution of the $\mathcal{N}$-system with the heavy traffic parameter $\epsilon$, and as the underlying Markov chain is positive recurrent, $\pi^\epsilon$ exists and is unique. Further, we use $\mathbb E_{\pi^\epsilon}[\cdot]$ to denote the expectation under the probability distribution $\pi_\epsilon$.
\begin{lemma}
\label{lem: nsys_mgf_equivalence}
Consider the \nsys as defined in Section \ref{sec: n_sys_model} operating under the MaxWeight scheduling. Suppose $\boldsymbol \theta \in \boldsymbol \Theta$ where $\boldsymbol \Theta = \{\boldsymbol \theta \in\mathbb{C}^2: Re(\theta_1) \leq 0, Re(\theta_1 + \theta_2 )\leq  0\}$ and $\boldsymbol \Theta^\epsilon = \boldsymbol \Theta \cap \{ \TT\in \mathbb C^2: \|\TT\| \leq \theta_0/4\epsilon \}$, where $\theta_0$ is given in Proposition \ref{prop: n_sys_ssc}. Let $\mathcal A \subseteq \mathcal X$ be a measurable set with respect to the probability measure $\mathbb \pi^\epsilon$. 
\begin{enumerate}[label=(\alph*), ref=\ref{lem: nsys_mgf_equivalence}.\alph*]
\item \label{lem: nsys_mgf_equivalence_a} For any $\epsilon< \epsilon_0$ and $\boldsymbol \theta \in \boldsymbol \Theta^\epsilon$, we have
    \begin{align*}
        \left| \EEpe \left[e^{\epsilon(\theta_1 q_1+\theta_2 q_2)} \mathbf 1_{\mathcal A} \right]\right| < \infty.
    \end{align*}
\item \label{lem: nsys_mgf_equivalence_b} For any $\TT \in \boldsymbol\Theta$, we have 
\begin{align*}
   \eplim \EEpe \left[  e^{\epsilon (\theta_1 q_1 +\theta_2 q_2)} \mathbf{1}_{\mathcal  A}  \right]  = \eplim \EEpe \left[e^{\epsilon(\theta_1 (q_1-q_2)\mathbf{1}_{\{q_1\geq q_2\}} + (\theta_1 + \theta_2)q_2 )}\mathbf{1}_{\mathcal A} \right],
\end{align*}
and
\begin{align*}
    \eplim \left| \EEpe \left[e^{\epsilon(\theta_1 q_1+\theta_2 q_2)} \mathbf 1_{\mathcal A} \right]\right| = \eplim \left| \EEpe \left[e^{\epsilon(\theta_1 (q_1-q_2)\mathbf{1}_{\{q_1\geq q_2\}} + (\theta_1 + \theta_2)q_2 )}\mathbf{1}_{\mathcal  A} \right] \right|  \leq \eplim \PPpe(\mathcal{A}).
\end{align*}
\item \label{lem: nsys_mgf_equivalence_c} Suppose $ \PPpe(\mathcal A) \leq c\epsilon $ for some constant $c$, which is independent of epsilon. Then, for any $\TT\in \boldsymbol \Theta$, we have 
\begin{align*}
     \eplim \frac{1}{\epsilon} \EEpe \left[  e^{\epsilon (\theta_1 q_1 +\theta_2 q_2)} \mathbf{1}_{\mathcal  A}  \right]  = \eplim \frac{1}{\epsilon} \EEpe \left[e^{\epsilon(\theta_1 (q_1-q_2)\mathbf{1}_{\{q_1\geq q_2\}} + (\theta_1 + \theta_2)q_2 )}\mathbf{1}_{\mathcal A}\right],
\end{align*}
and
\begin{align*}
    \eplim  \frac{1}{\epsilon} \left| \EEpe \left[e^{\epsilon(\theta_1 q_1+\theta_2 q_2)} \mathbf 1_{\mathcal A} \right]\right| = \eplim  \frac{1}{\epsilon} \left| \EEpe \left[e^{\epsilon(\theta_1 (q_1-q_2)\mathbf{1}_{\{q_1\geq q_2\}} + (\theta_1 + \theta_2)q_2 )}\mathbf{1}_{\mathcal  A} \right] \right|  \leq  1.
\end{align*}
\end{enumerate}
\end{lemma}
\proof{\textit{Proof of Lemma \ref{lem: nsys_mgf_equivalence}.}}
Recall that $q_{\perp} = (q_2-q_1)\mathbf{1}_{\{q_1 < q_2\}}$. Then, we have
\begin{align*}
    \theta_{1} q_1 + \theta_2 q_2 &= \theta_1 (q_1-q_2)\mathbf{1}_{\{q_1\geq q_2\}} + (\theta_1 + \theta_2)q_2 + \theta_1 (q_1-q_2)\mathbf{1}_{\{q_1 < q_2\}}\\
    & = \theta_1 (q_1-q_2)\mathbf{1}_{\{q_1\geq q_2\}} + (\theta_1 + \theta_2)q_2 - \theta_1 q_{\perp}.
\end{align*}
 This gives us that,
\begin{align}
\label{eq: n_sys_requiredlemma_sscbound}
   & \left| \EEpe \left[   e^{\epsilon (\theta_1 q_1 +\theta_2 q_2)} \mathbf{1}_{\mathcal A}  \right] -  \EEpe \left[e^{\epsilon(\theta_1 (q_1-q_2)\mathbf{1}_{\{q_1\geq q_2\}} + (\theta_1 + \theta_2)q_2 )}\mathbf{1}_{\mathcal A} \right] \right|\nonumber\\
   &\ \ \ \ \leq \EEpe \left[\left|e^{\epsilon(\theta_1 (q_1-q_2)\mathbf{1}_{\{q_1\geq q_2\}} + (\theta_1 + \theta_2)q_2 )}\mathbf{1}_{\mathcal A}\right| \left|  e^{-\epsilon\theta_1 q_{\perp}} -1 \right|  \right] \nonumber\\  
   &\ \ \ \ \stackrel{(a)}{\leq} \EEpe[\left|  e^{-\epsilon\theta_1 q_{\perp}} -1 \right| \mathbf{1}_{\mathcal A} ]\allowdisplaybreaks \nonumber\\
    &\ \ \ \ \stackrel{(b)}{\leq}\EEpe[\epsilon |\theta_1|q_{\perp} e^{\epsilon|\theta_1 |q_{\perp}}  \mathbf{1}_{\mathcal A} ]\allowdisplaybreaks \nonumber\\
    &\ \ \ \  \stackrel{(c)}{\leq}\epsilon |\theta_1| \left (\PPpe(\mathcal{A})\right)^{\frac{1}{2}} \EEpe[ q_{\perp}^4]^{\frac{1}{4}} \EEpe[e^{4\epsilon|\theta_1 |q_{\perp}}  ]^{\frac{1}{4}},
\end{align}
where (a) holds as $Re(\theta_1)\leq $ and $Re(\theta_1 + \theta_2) \leq 0$ as $\TT \in \boldsymbol \Theta^\epsilon \subset \boldsymbol \Theta$; (b) holds because $|e^x-1| \leq |x|e^{|x|}$ for any $x\in \mathbb{C}$ and $q_{\perp} \geq 0$ by definition; (c) holds by using Cauchy-Schwarz inequality twice. As the scheduling policy achieves SSC according to Proposition \ref{prop: n_sys_ssc}, for any $4\epsilon\|\TT\|\leq \theta_0$ (see Proposition \ref{prop: n_sys_ssc}), we have $\eplim \EEpe[ q_{\perp}^4]^{\frac{1}{4}} \EEpe[e^{4\epsilon|\theta_1 |q_{\perp}} ]^{\frac{1}{4}} < \infty$. By again using $Re(\theta_1)\leq $ and $Re(\theta_1 + \theta_2) \leq 0$, we have 
\begin{align*}
    \left|\EEpe \left[e^{\epsilon(\theta_1 (q_1-q_2)\mathbf{1}_{\{q_1\geq q_2\}} + (\theta_1 + \theta_2)q_2 )}\mathbf{1}_{\mathcal A} \right] \right| \leq \EEpe \left[ \left|e^{\epsilon(\theta_1 (q_1-q_2)\mathbf{1}_{\{q_1\geq q_2\}} + (\theta_1 + \theta_2)q_2 )}\mathbf{1}_{\mathcal A} \right| \right]  \leq 1.
\end{align*}
Combining this with Eq. \eqref{eq: n_sys_requiredlemma_sscbound}, we get Lemma \ref{lem: nsys_mgf_equivalence_a}.
By taking $\epsilon\rightarrow 0$ in Eq. \eqref{eq: n_sys_requiredlemma_sscbound}, as $\eplim \boldsymbol \Theta^\epsilon = \boldsymbol \Theta$, we have 
\begin{align*}
      \eplim \EEpe \left[  e^{\epsilon (\theta_1 q_1 +\theta_2 q_2)} \mathbf{1}_{\mathcal  A}  \right]  = \eplim \EEpe \left[e^{\epsilon(\theta_1 (q_1-q_2)\mathbf{1}_{\{q_1\geq q_2\}} + (\theta_1 + \theta_2)q_2 )}\mathbf{1}_{\mathcal A} \right],
\end{align*}
for all $\TT \in \boldsymbol \Theta$.
Note that, this in turn implies that
\begin{align*}
      \eplim \left| \EEpe \left[  e^{\epsilon (\theta_1 q_1 +\theta_2 q_2)} \mathbf{1}_{\mathcal  A}  \right] \right| &= \eplim \left|\EEpe \left[e^{\epsilon(\theta_1 (q_1-q_2)\mathbf{1}_{\{q_1\geq q_2\}} + (\theta_1 + \theta_2)q_2 )}\mathbf{1}_{\mathcal A} \right]\right| \stackrel{(a)}{\leq} \eplim \EEpe \left[\mathbf{1}_{\mathcal A} \right]= \eplim \PPpe(\mathcal{A}).
\end{align*}
This proves Lemma \ref{lem: nsys_mgf_equivalence_b}.
Further, from Eq. \eqref{eq: n_sys_requiredlemma_sscbound}, we also have that, 
\begin{align*}
   \frac{1}{\epsilon} & \left| \EEpe \left[   e^{\epsilon (\theta_1 q_1 +\theta_2 q_2)} \mathbf{1}_{\mathcal A}  \right] -  \EEpe \left[e^{\epsilon(\theta_1 (q_1-q_2)\mathbf{1}_{\{q_1\geq q_2\}} + (\theta_1 + \theta_2)q_2 )}\mathbf{1}_{\mathcal A} \right] \right|\\
   & \ \ \ \ \leq |\theta_1| \left (\PPpe(\mathcal{A})\right)^{\frac{1}{2}} \EEpe[ q_{\perp}^4]^{\frac{1}{4}} \EEpe[e^{4\epsilon|\theta_1 |q_{\perp}}  ]^{\frac{1}{4}}.
\end{align*}
Thus, under the condition $\PPpe(\mathcal A) \leq c\epsilon $, for any $\TT\in \boldsymbol\Theta$, we have
\begin{align*}
      \eplim \frac{1}{\epsilon} \EEpe \left[  e^{\epsilon (\theta_1 q_1 +\theta_2 q_2)} \mathbf{1}_{\mathcal  A}  \right]  = \eplim \frac{1}{\epsilon} \EEpe \left[e^{\epsilon(\theta_1 (q_1-q_2)\mathbf{1}_{\{q_1\geq q_2\}} + (\theta_1 + \theta_2)q_2 )}\mathbf{1}_{\mathcal A}\right],
\end{align*}
and 
\begin{align*}
      \eplim \frac{1}{\epsilon} \left| \EEpe \left[  e^{\epsilon (\theta_1 q_1 +\theta_2 q_2)} \mathbf{1}_{\mathcal  A}  \right] \right| &= \eplim \frac{1}{\epsilon} \left|\EEpe \left[e^{\epsilon(\theta_1 (q_1-q_2)\mathbf{1}_{\{q_1\geq q_2\}} + (\theta_1 + \theta_2)q_2 )}\mathbf{1}_{\mathcal A} \right]\right| \stackrel{(a)}{\leq} \eplim \frac{1}{\epsilon} \PPpe(\mathcal{A}) \leq c,
\end{align*}
where (a) holds as $Re(\theta_1)\leq $ and $Re(\theta_1 + \theta_2) \leq 0$.
\hfill $\blacksquare$
\endproof

\subsection{Proof of Theorem \ref{thm: n_sys_mgf_eq}}
\label{app: n_sys_mgf_eq}

\proof{\textit{Proof of Theorem \ref{thm: n_sys_mgf_eq}}.}
Consider the exponential Lyapunov function defined as 
\begin{equation*}
    V(\mathbf q) \overset{\Delta}{=} e^{\epsilon(\theta_1 q_1+\theta_2 q_2)}.
\end{equation*}


Suppose $G$  is the infinitesimal generator matrix of the underlying CTMC, then we define the drift of the Lyapunov function $V(\mathbf q) $ by,
\begin{equation*}
    \Delta V(\mathbf q) \overset{\Delta}{=}  \sum_{\mathbf q'} G(\mathbf q,\mathbf q') \big[V(\mathbf q') - V(\mathbf q)\big].
\end{equation*}
For the $\mathcal{N}$-system presented in Section \ref{sec: n_sys_model}, there are four possible transition as shown below,
\begin{align*}
    \q' =\begin{cases}
    (q_1+1, q_2) \quad \text{ with rate } \lambda_1, \\
    (q_1,q_2+1) \quad \text{ with rate } \lambda_2,\\
      (q_1-1,q_2)  \quad \text{ with rate } \mu_1 \text{ if } q_1>q_2 \text{ and } q_1>0, \\
      (q_1,q_2-1) \quad \text{ with rate } \mu_2 \text{ if } q_1>q_2 \text{ and } q_2>0,\\
      (q_1,q_2-1) \quad \text{ with rate } \mu_1+\mu_2 \text{ if } q_1\leq q_2 \text{ and } q_2>0.
    \end{cases}
\end{align*}
In the above equation, the first two cases correspond to the drift due to arrivals in each of the queue; the third and the fourth case correspond to the service for each queue when $q_1>q_2$; and the last case correspond to the service when $q_2\geq q_1$ in which case the only $Q_2$ is served. Also, a queue can be served only when the queue length is greater than zero, and hence we have the condition, $q_1>0$ in the third case and $q_2>0$ in fourth and fifth case.
Using this definition, the drift $\Delta V(\mathbf q)$ is given by,
\begin{align}
\label{eq: n_sys_thm1_drift}
    \Delta V(\mathbf q) &= e^{\epsilon(\theta_1 q_1 + \theta_2 q_2)} [ \lambda_1 (e^{\epsilon\theta_1 }-1) +  \lambda_2 (e^{\epsilon\theta_2 }-1)+ \mu_1(e^{-\epsilon\theta_1 }-1) \mathbf{1}_{\{q_1 > q_2,q_1>0\}} \nonumber\\ 
    & \ \ \ \  +\mu_2(e^{-\epsilon\theta_2 }-1) \mathbf{1}_{\{q_1 > q_2,q_2>0\}}+ (\mu_2+\mu_1)(e^{-\epsilon\theta_2 }-1) \mathbf{1}_{\{q_1 \leq q_2,q_2 >0\}} \big] \nonumber\\
    & \nonumber\\
    & = e^{\epsilon(\theta_1 q_1 + \theta_2 q_2)} \big[ \lambda_1 (e^{\epsilon\theta_1 }-1) +  \lambda_2 (e^{\epsilon\theta_2 }-1)+ \mu_1(e^{-\epsilon\theta_1 }-1)  +\mu_2(e^{-\epsilon\theta_2 }-1) \nonumber\\
    & \ \ \ \ + \mu_1(e^{-\epsilon\theta_2 }-e^{-\epsilon\theta_1 })\mathbf{1}_{\{q_1 \leq q_2\}}\big] \nonumber\\ 
    & \ \ \ \  - \mu_2e^{\epsilon\theta_1 q_1 }(e^{-\epsilon\theta_2 }-1) \mathbf{1}_{\{q_2 =0\}} - \mu_1(e^{-\epsilon\theta_2 }-1) \mathbf{1}_{\{q_1=q_2 =0\}},
\end{align}
where we have used that $e^{\epsilon(\theta_1 q_1+\theta_2 q_2) }\mathbf{1}_{\{q_2 =0\}}= e^{\epsilon\theta_1 q_1 }\mathbf{1}_{\{q_2 =0\}}$ and $e^{\epsilon(\theta_1 q_1+\theta_2 q_2) }\mathbf{1}_{\{q_1=q_2 =0\}}= \mathbf{1}_{\{q_2 =0\}}$. 
As the underlying Markov chain is positive recurrent, we have that if $\q$ follows the steady state distribution $\pi^\epsilon$, then $\q'$ also follows the distribution $\pi^\epsilon$. 
Further, from Lemma \ref{lem: nsys_mgf_equivalence_a}, we have that $|\EEpe[V(\q)]| < \infty$ for any $\TT\in \boldsymbol \Theta^\epsilon$. So, $\EEpe[V(\q')] = \EEpe[V(\q)]$. This implies that $\EEpe[\Delta V(\q)] = \EEpe[V(\q')] - \EEpe[V(\q)] =0$.
This gives us,
\begin{align}
\label{eq: n_sys_funcprove_termall}
   0 =\EEpe[\Delta V(\q)] =\big[& \lambda_1 (e^{\epsilon\theta_1 }-1) +  \lambda_2 (e^{\epsilon\theta_2 }-1)+ \mu_1(e^{-\epsilon\theta_1 }-1)  +\mu_2(e^{-\epsilon\theta_2 }-1)\big] \EEpe[e^{\epsilon(\theta_1 q_1 + \theta_2 q_2)}]\nonumber \\
    & +\mu_1(e^{-\epsilon\theta_2 }-e^{-\epsilon\theta_1 })  \EEpe[e^{\epsilon(\theta_1 q_1 + \theta_2 q_2)}\mathbf 1_{\{q_1\leq q_2\}} ]\nonumber\\
    & - (e^{-\epsilon\theta_2 }-1)\EEpe[e^{\epsilon\theta_1 q_1 }\left(\mu_2\mathbf 1_{\{q_2 = 0\}}+\mu_1\mathbf 1_{\{q_2 =q_1=0\}} \right) ]. 
\end{align}
Using these arguments, we present a step by step procedure to get the functional equation as follows.

\textbf{Step 1:} By putting $\theta_2 = 0$ in Eq. \eqref{eq: n_sys_funcprove_termall}, we get that, for any $\theta_1$ such that $Re(\theta_1)\leq 0$, we have 
\begin{align*}
   (e^{\epsilon\theta_1 }-1) \Big[ \lambda_1 \EEpe[ e^{\epsilon\theta_1 q_1} ] - \mu_1 e^{-\epsilon\theta_1} \EEpe[ e^{\epsilon\theta_1 q_1}\mathbf{1}_{\{q_1 > q_2\}} ] \Big] =0.
\end{align*}
Thus for any $\theta_1$ such that $Re(\theta_1)<0$, we have
\begin{align*}
 \lambda_1 \EEpe[ e^{\epsilon\theta_1 q_1} ] - \mu_1 e^{-\epsilon\theta_1} \EEpe[ e^{\epsilon\theta_1 q_1}\mathbf{1}_{\{q_1 > q_2\}} ]  =0.
\end{align*}
Now, by taking $\theta_1\in (-\infty,0)$ and $\theta_1 \rightarrow 0$ in the above equation, we get that,
\begin{align}
\label{eq: n_sys_prob_surface1}
    \PPpe(q_1 \leq q_2) = 1-\frac{\lambda_1}{ \mu_1} =  \epsilon.
\end{align}
\textbf{Step 2:} 
Similar to that in Step 1, by putting $\theta_1 = 0$ in Eq. \eqref{eq: n_sys_funcprove_termall}, and dividing by the term $(e^{-\epsilon \theta_2}-1)$ for $\theta_2\neq 0$ on both sides, we have that for any $\theta_2$ such that $Re(\theta_2)<0$,
\begin{equation*}
     (\mu_2 - \lambda_2e^{\epsilon \theta_2})\EEpe[e^{\epsilon\theta_2 q_2}] + \mu_1 \EEpe[e^{\epsilon\theta_2 q_2}\mathbf{1}_{\{ q_1\leq q_2\}}]  - \mu_2 \PPpe(q_2=0) - \mu_1 \PPpe(q_1=q_2 =0) = 0.
\end{equation*}
Now, by taking $\theta_2 \in (-\infty,0)$ and $\theta_2 \rightarrow 0$ to get,
\begin{align*}
   & \mu_2 - \lambda_2 +\mu_1 \PPpe(q_1\leq q_2) - \mu_2 \PPpe(q_2=0) - \mu_1 \PPpe(q_1=q_2 =0) = 0.
\end{align*}
By combining the above equation with $\PPpe(q_1\leq q_2) = 1- \frac{\lambda_1}{\mu_1}$ from Eq. \eqref{eq: n_sys_prob_surface1}, we get 
\begin{align}
\label{eq: n_sys_prob_surface2}
    \mu_2 \PPpe(q_2=0) + \mu_1 \PPpe(q_1=q_2 =0) = \mu_1  + \mu_2  - \lambda_1- \lambda_2  = \gamma \epsilon(\mu_1  + \mu_2).
\end{align}
\textbf{Step 3:} Now, we do the heavy traffic approximation, where we use the second order Taylor expansion of complex exponential function. Essentially, as $\EEpe[\Delta V(\q)] =0$ for all value of $\epsilon$, we have
\begin{align}
\label{eq: n_sys_funcprove_secondorder}
    \eplim \frac{1}{\epsilon^2} \EEpe [\Delta V(\q)] =0.
\end{align}
For the first term, we have 
\begin{align*}
   \lambda_1 &(e^{\epsilon\theta_1 }-1)  +  \lambda_2 (e^{\epsilon\theta_2 }-1)+ \mu_1(e^{-\epsilon\theta_1 }-1)  +\mu_2(e^{-\epsilon\theta_2 }-1) \\
   & \stackrel{(a)}{=} \mu_1(1-\epsilon) (e^{\epsilon\theta_1 }-1)  + \big((1-\gamma\epsilon) \mu_2 + \epsilon \mu_1 (1-\gamma)\big) (e^{\epsilon\theta_2 }-1)+\mu_1(e^{-\epsilon\theta_1 }-1)  +\mu_2(e^{-\epsilon\theta_2 }-1) \\ 
   & = \mu_1 \left(e^{\epsilon\theta_1 }+e^{-\epsilon\theta_1 } -2\right) + \mu_2 \left(e^{\epsilon\theta_2 }+e^{-\epsilon\theta_2 }-2\right) - \epsilon\left( \mu_1 (e^{\epsilon\theta_1 }-1) + (\gamma \mu_2+ (1-\gamma)\mu_1)(e^{\epsilon\theta_2 }-1)  \right). 
\end{align*}
where (a) follows by using the value of $\lambda_1$ and $\lambda_2$ as in Eq. \eqref{eq: n_sys_arrival_vector}. 
Now, by using 
\begin{align*}
    \eplim \frac{1}{\epsilon^2} \left(e^{\epsilon\theta }+e^{-\epsilon\theta } -2\right) = \theta^2, && \eplim \frac{1}{\epsilon} \left(e^{\epsilon\theta } -1\right) = \theta,
\end{align*}
for any $\theta \in \mathbb C$,  we have 
\begin{align*}
   \eplim \frac{1}{\epsilon^2} & \left( \lambda_1 (e^{\epsilon\theta_1 }-1)  +  \lambda_2 (e^{\epsilon\theta_2 }-1)+ \mu_1(e^{-\epsilon\theta_1 }-1)  +\mu_2(e^{-\epsilon\theta_2 }-1)\right) \\
   & = \mu_1 \theta_1^2 + \mu_2\theta_2^2 - \mu_1\theta_1 - (\gamma \mu_2+ (1-\gamma)\mu_1)\theta_2\\
   & = \mu_1 (-\theta_1+\theta_1^2) + \mu_2 (-\gamma\theta_2+\theta_2^2)+ \theta_2\mu_1(1-\gamma).
\end{align*}
Thus,
\begin{align}
\label{eq: n_sys_funcprove_term1}
\eplim \frac{1}{\epsilon^2} \lambda_1 &(e^{\epsilon\theta_1 }-1)  +  \lambda_2 (e^{\epsilon\theta_2 }-1)+ \mu_1(e^{-\epsilon\theta_1 }-1)  +\mu_2(e^{-\epsilon\theta_2 }-1) \EEpe[e^{\epsilon(\theta_1 q_1 + \theta_2 q_2)}] \nonumber\\
&= \mu_1 (-\theta_1+\theta_1^2) + \mu_2 (-\gamma\theta_2+\theta_2^2)+ \theta_2\mu_1(1-\gamma) \eplim\EEpe[e^{\epsilon(\theta_1 q_1 + \theta_2 q_2)}].
\end{align}
For the second term, 
\begin{align*}
    \eplim \frac{1}{\epsilon} \mu_1 (e^{-\epsilon\theta_2} -e^{-\epsilon\theta_1}) = \mu_1 ( \theta_1 - \theta_2).
\end{align*}
Thus,
\begin{align}
\label{eq: n_sys_funcprove_term2}
    \eplim \frac{1}{\epsilon^2 } \mu_1 (e^{-\epsilon\theta_2} -e^{-\epsilon\theta_1}) \EEpe \left[ e^{\epsilon (\theta_1 q_1 +\theta_2 q_2)} \mathbf{1}_{\{q_1\leq q_2\}}  \right] &=  \mu_1 ( \theta_1 - \theta_2) \eplim \frac{1}{\epsilon} \EEpe \left[e^{\epsilon (\theta_1q_1 + \theta_2q_2) }\mathbf{1}_{\{q_1\leq q_2\}}\right].
\end{align}
Finally, for the third term,
\begin{align}
\label{eq: n_sys_funcprove_term3}
  \eplim & \frac{1}{\epsilon^2} (e^{-\epsilon\theta_2 }-1)\EEpe[e^{\epsilon(\theta_1 q_1 +\theta_2 q_2)}\left(\mu_2\mathbf 1_{\{q_2 = 0\}}+\mu_1\mathbf 1_{\{q_2 =q_1=0\}} \right) ] \nonumber\\
  &= -\theta_2 \eplim \frac{1}{\epsilon} \EEpe[e^{\epsilon (\theta_1 q_1 +\theta_2 q_2)}\left(\mu_2\mathbf 1_{\{q_2 = 0\}}+\mu_1\mathbf 1_{\{q_2 =q_1=0\}} \right) ].
\end{align}
Now by substituting Eq. \eqref{eq: n_sys_funcprove_term1}, \eqref{eq: n_sys_funcprove_term2} and \eqref{eq: n_sys_funcprove_term3} in the Eq. \eqref{eq: n_sys_funcprove_secondorder}, and using the notation,
\begin{align*}
   L(\boldsymbol \theta) =  \eplim\EEpe \left[e^{\epsilon(\theta_1 q_1 + \theta_2 q_2)}\right]
\end{align*}
and 
\begin{align*}
   M_1(\boldsymbol \theta) = \eplim \frac{1}{\epsilon} \EEpe \left[e^{\epsilon (\theta_1q_1 + \theta_2q_2) }\mathbf{1}_{\{q_1\leq q_2\}}\right], && M_2(\boldsymbol \theta) = \eplim \frac{1}{\epsilon}  \EEpe[e^{\epsilon (\theta_1 q_1 + \theta_2 q_2) }\left(\mu_2\mathbf 1_{\{q_2 = 0\}}+\mu_1\mathbf 1_{\{q_2 =q_1=0\}} \right) ],
\end{align*}
we have that for all $\boldsymbol \theta \in \capTT$,
\begin{equation}
\label{eq: nsys_mgf_withp2}
    L(\boldsymbol \theta) \big[ \mu_2 (-\gamma\theta_2  +\theta_2^2)+\theta_2\mu_1(1-\gamma)+ \mu_1 (-\theta_1+\theta_1^2)\big] +\mu_1(\theta_1-\theta_2) M_1(\boldsymbol \theta)  +\theta_2 M_2(\boldsymbol \theta)   =0.
\end{equation}
\endproof

\subsection{Proof of Lemma \ref{lem: n_sys_uniqueness}}
\label{app: n_sys_uniqueness}

\proof{\textit{Proof of Lemma \ref{lem: switch_analytic}.}}
In order to prove Lemma \ref{lem: switch_analytic}, we first show that all the moments of the scaled total queue length, i.e., $\epsilon (q_1+ q_2)$ exists. We consider the Lyapunov function $V_1(\q):= e^{\epsilon \theta q_1} +  e^{\epsilon \theta q_2}$, where $\theta\in\mathbb R$. 
The drift of the of the function $V_1(\q)$ can be obtained by using similar calculation as in Eq. \eqref{eq: n_sys_funcprove_termall}. We first substitute $\theta_1 = \theta$ and $ \theta_2=0$, to get 
\begin{align*}
    \frac{1}{1-e^{-\epsilon \theta}} \EE[\Delta e^{\epsilon \theta q_1}| \q] = \big[ \lambda_1 e^{\epsilon \theta}-  \mu_1\big] e^{\epsilon\theta q_1}+  \mu_1 e^{\epsilon\theta q_1} \mathbf{1}_{\{q_1\leq q_2\}}.
\end{align*}
Next, by substituting $\theta_1 = 0$ and $ \theta_2=\theta$, we get
\begin{align*}
    \frac{1}{1-e^{-\epsilon \theta}} \EE[\Delta e^{\epsilon \theta q_2}| \q] &= \big[ \lambda_2 e^{\epsilon \theta}-  \mu_2\big] e^{\epsilon\theta q_2}- \mu_1 e^{\epsilon\theta q_2}\mathbf 1_{\{q_1\leq q_2\}}  + \mu_2\mathbf 1_{\{q_2 = 0\}}+\mu_1\mathbf 1_{\{q_2 =q_1=0\}}. 
\end{align*}
This gives us that
\begin{align*}
    \frac{1}{1-e^{-\epsilon \theta}} \EE[\Delta V_1(\q)| \q] & = e^{\epsilon \theta} \big(\lambda_1 e^{\epsilon\theta q_1} + \lambda_2 e^{\epsilon\theta q_2} \big) - \mu_2 e^{\epsilon\theta q_2} - \mu_1 \max\{e^{\epsilon\theta q_1} ,e^{\epsilon\theta q_2}   \}  \nonumber \\
    & \quad + \mu_2\mathbf 1_{\{q_2 = 0\}}+\mu_1\mathbf 1_{\{q_2 =q_1=0\}}.
\end{align*}
Next, we use $\max\{e^{\epsilon\theta q_1} ,e^{\epsilon\theta q_2}   \} \geq \epsilon(1-\gamma) e^{\epsilon\theta q_2} + (1-\epsilon(1-\gamma)) e^{\epsilon\theta q_1}$. Now by using,
\begin{align*}
    \lambda_1 = (1-\epsilon)\mu_1, && \lambda_1 + \lambda_2 =(1-\gamma \epsilon)( \mu_1 + \mu_2),
\end{align*}
we have,
\begin{align*}
    \frac{1}{1-e^{-\epsilon \theta}} \EE[\Delta V_1(\q)| \q] & \leq  \mu_1 \Big(e^{\epsilon \theta} (1-\epsilon) -(1-\epsilon(1-\gamma))\Big) e^{\epsilon\theta q_1} \\
    &\quad + \Big( e^{\epsilon \theta} \big( (1-\gamma \epsilon)\mu_2 + \epsilon(1-\gamma)\mu_1\big) -\mu_2 - \epsilon(1-\gamma) \mu_1\Big)  e^{\epsilon\theta q_2} \nonumber \\
    & \quad + \mu_2\mathbf 1_{\{q_2 = 0\}}+\mu_1\mathbf 1_{\{q_2 =q_1=0\}}\\
    &\leq \mu_1\epsilon( \theta- \gamma) e^{\epsilon\theta q_1} + (\epsilon \theta \mu_2 - \gamma \epsilon \mu_2 + \epsilon^2(1-\gamma)\theta\mu_1)\\
    & \quad + \mu_2\mathbf 1_{\{q_2 = 0\}}+\mu_1\mathbf 1_{\{q_2 =q_1=0\}}\\
    &\leq -\frac{1}{2} \gamma\epsilon \mu_1 e^{\epsilon\theta q_1} -\frac{1}{4}\gamma\epsilon \mu_2 e^{\epsilon\theta q_2}+ \mu_2\mathbf 1_{\{q_2 = 0\}}+\mu_1\mathbf 1_{\{q_2 =q_1=0\}},
\end{align*}
where the last inequality holds for any $\theta<\gamma/2$ and $\epsilon< \frac{\gamma\mu_1}{4(1-\gamma)\mu_2}$. Thus, by equating the drift to zero in steady state, for all $\theta<\gamma/2$ and $\epsilon$ small enough, we have 
\begin{align*}
    \EEpe \big[\mu_1e^{\epsilon\theta q_1}+\mu_2 e^{\epsilon\theta q_1} ] \leq \frac{4}{\epsilon \gamma }\EEpe[\mu_2\mathbf 1_{\{q_2 = 0\}}+ \mu_1\mathbf 1_{\{q_2 =q_1=0\}}].
\end{align*}
Next, by using Eq. \eqref{eq: n_sys_prob_surface2}, we get
\begin{align*}
    \EEpe \big[\mu_1e^{\epsilon\theta q_1}+\mu_2 e^{\epsilon\theta q_1} ] \leq 4(\mu_1+\mu_2).
\end{align*}
As the above equation is true for all $\theta<\gamma/2$ and $\epsilon< \frac{\gamma\mu_1}{4(1-\gamma)\mu_2}$, we have that all the moments of $\epsilon q_1$ and $\epsilon q_2$ exists for $\epsilon< \frac{\gamma\mu_1}{4(1-\gamma)\mu_2}$. This implies that the function $L(\boldsymbol \theta)$ can be represented as a convergent power series for all $\boldsymbol\theta\in \capTT$. This shows that $L(\TT)$ is holomorphic and continuous over $\capTT$. Further, $L(\TT)$ is non zero simply because $L(\mathbf 0) =1$, and $L(\TT)$ is bounded over $\capTT$ by using Lemma \ref{lem: nsys_mgf_equivalence}. As such, we have that $L(\boldsymbol \theta) \in \mathcal{L} (\capTT)$. 

Next, for $M_1(\TT)$ and $M_2(\TT)$
Similarly, $M_1(\TT)$ and $M_2(\TT)$ lies in the set $\mathcal{L} (\capTT)$ by using similar arguments in Lemma \ref{lem: nsys_mgf_equivalence_c} along the existence of moments of $\epsilon q_1$ and $\epsilon q_2$.
\endproof

\proof{\textit{Proof of Lemma \ref{lem: n_sys_uniqueness}}.}
In order to prove Lemma \ref{lem: n_sys_uniqueness}, we are going to Lemma \ref{lem: functional_uniqueness}. Using Lemma \ref{lem: nsys_mgf_equivalence}, we know that
\begin{align*}
   L(\boldsymbol \theta) =  \eplim\EEpe \left[e^{\epsilon(\theta_1 q_1 + \theta_2 q_2)}\right] =  \eplim \EEpe \left[e^{\epsilon(\theta_1 (q_1-q_2)\mathbf{1}_{\{q_1\geq q_2\}} + (\theta_1 + \theta_2)q_2 )} \right],
\end{align*}
and 
\begin{align*}
   M_1(\boldsymbol \theta)& =\eplim \frac{1}{\epsilon} \EEpe \left[e^{\epsilon(\theta_1 (q_1-q_2)\mathbf{1}_{\{q_1\geq q_2\}} + (\theta_1 + \theta_2)q_2 )} \mathbf{1}_{\{q_1\leq q_2\}}\right] = \eplim \frac{1}{\epsilon} \EEpe \left[e^{\epsilon (\theta_1 + \theta_2)q_2 }\mathbf{1}_{\{q_1\leq q_2\}}\right],\\
    M_2(\boldsymbol \theta) 
    &= \eplim \frac{1}{\epsilon}  \EEpe[e^{\epsilon( \theta_1 q_1 +\theta_2 q_2) }\left(\mu_2\mathbf 1_{\{q_2 = 0\}}+\mu_1\mathbf 1_{\{q_2 =q_1=0\}} \right) ]\\
    &= \eplim \frac{1}{\epsilon}  \EEpe[e^{\epsilon \theta_1 q_1 }\left(\mu_2\mathbf 1_{\{q_2 = 0\}}+\mu_1\mathbf 1_{\{q_2 =q_1=0\}} \right) ]
\end{align*}
Next, we do a linear transform of the variable $\boldsymbol \theta$ so that the Laplace transform $M_1(\cdot)$ and $M_2(\cdot)$ depends only on one variable. We can pick $\psi_1 = \theta_1$ and $\psi_2 = \theta_1 +\theta_2$, i.e., $\boldsymbol \psi = (\psi_1,\psi_2) = (\theta_1,\theta_1 +\theta_2)$. Note that the mapping $(\psi_1,\psi_2) = (\theta_1,\theta_1 +\theta_2)$ is bijective, as we have that $\boldsymbol \theta = (\psi_1,\psi_2-\psi_1) $. Thus, we can transform the function equation in Theorem \ref{thm: n_sys_mgf_eq} by using $\boldsymbol \theta = (\psi_1,\psi_2-\psi_1) $, i.e., we replace $L(\boldsymbol \theta)$, $M_1(\boldsymbol \theta) $ and $M_2(\boldsymbol \theta)$ with $\tilde L(\boldsymbol \psi)$, $\tilde M_1(\psi_2)$ and  $\tilde M_2(\psi_1)$ respectively, such that 
\begin{align*}
   \tilde L(\boldsymbol \psi) =  \eplim \EEpe \left[e^{\epsilon(\psi_1 (q_1-q_2)\mathbf{1}_{\{q_1\geq q_2\}} + \psi_2 q_2 )}\mathbf{1}_{\mathcal A} \right],
\end{align*}
and
\begin{align*}
   \tilde  M_1( \psi_2) = \eplim \frac{1}{\epsilon} \EEpe \left[e^{\epsilon \psi_2 q_2 }\mathbf{1}_{\{q_1\leq q_2\}}\right], && \tilde  M_2(\psi_1) = \eplim \frac{1}{\epsilon}  \EEpe[e^{\epsilon \psi_1 q_1 }\left(\mu_2\mathbf 1_{\{q_2 = 0\}}+\mu_1\mathbf 1_{\{q_2 =q_1=0\}} \right) ].
\end{align*}
and for $\boldsymbol \psi \in \boldsymbol \Psi $, where
\[\boldsymbol\Psi =\mathbf B^T \capTT = \{ \boldsymbol \psi \in \mathbb C^2: Re(\boldsymbol \psi ) \leq \mathbf 0_2\}, \]
we have 
\begin{align}
\label{eq: nsys_rewritten_func_eq}
    \big((\mu_1+\mu_2) \psi_1^2  +\mu_2 \psi_2^2 -2\mu_2 \psi_1\psi_2 +& \psi_1(\gamma(\mu_1+\mu_2) - 2\mu_1) + \psi_2(\mu_1 - \gamma(\mu_1+\mu_2)) \big) \tilde L(\boldsymbol{\psi}) \nonumber\\
    &+ \mu_1(2\psi_1 -\psi_2)\tilde  M_1(\psi_2) +(\psi_2 - \psi_1) \tilde  M_2(\psi_1)  = 0.
\end{align}
From this, by substituting $\psi_1 =0$, we have 
\begin{align*}
 \mu_1 \tilde M_1(\psi_2) &= \tilde M_2 (0)+ \big((\mu_1+\mu_2) \psi_1  + (\gamma(\mu_1+\mu_2) - 2\mu_1) \big) \tilde L(0,\psi_2)\\
 &=\gamma(\mu_1+\mu_2) + \big((\mu_1+\mu_2) \psi_1  + (\gamma(\mu_1+\mu_2) - 2\mu_1) \big) \tilde L(0,\psi_2).
\end{align*}
From the above relation, as $\tilde L(0,\psi_2)$ is infinitely differentiable with respect to $\psi_2$, it follows that $\tilde M_1(\psi_2)$ is also infinitely differentiable with respect to $\psi_2$. And so, $\tilde M_1(\psi_2)$ is holomorphic and continuous. Also, $\tilde M_1(0)=1$, and so $\tilde M_1(\psi_2)$ is non-zero. Finally, $\tilde M_1(\psi_2)$ is bounded simply by using Lemma \ref{lem: nsys_mgf_equivalence}. As such, $\tilde M_1(\cdot)\in \mathcal{L}(\boldsymbol{\psi})$. Similarly, $\tilde M_2(\cdot)\in \mathcal{L}(\boldsymbol{\psi})$.


As the functional equation in Eq. \eqref{eq: n_sys_mgf_eq} holds for any $\boldsymbol \theta \in \capTT$, so the rewritten functional equation in Eq. \eqref{eq: nsys_rewritten_func_eq} holds for any $\boldsymbol \psi \in  \boldsymbol \Psi $. Further, one can easily verify that the conditions in \eqref{eq: funceqconditions} are satisfied for the functional equation in \eqref{eq: nsys_rewritten_func_eq}.
Thus, the functional equation in Eq. \eqref{eq: nsys_rewritten_func_eq} satisfies the conditions mentioned in Lemma \ref{lem: functional_uniqueness} and so, the functional equation in Eq. \eqref{eq: nsys_rewritten_func_eq} has a unique solution. This in turn implies that there is a unique $L(\TT)$ that satisfies the functional equation in \eqref{eq: n_sys_mgf_eq} for all $\TT\in \capTT$. \hfill $\blacksquare$

\endproof

\subsection{Proof of Theorem \ref{thm: n_sys_distribution_c}}
\label{app: n_sys_distribution}

\proof{\textit{Proof of Theorem \ref{thm: n_sys_distribution_c}}.}
We know that the Laplace transform of distribution uniquely defines the distribution. 
For the considered distribution, $\epsilon q_1 \rightarrow \Upsilon_1+ \Upsilon_2$ and $\epsilon q_2 \rightarrow \Upsilon_2$ as $\epsilon \rightarrow 0$ is equivalent to saying that for all $\boldsymbol \theta \in \capTT$, 
\begin{equation*}
    \lim_{\epsilon\rightarrow 0} \mathbb{E}[e^{\epsilon(\theta_1 q_1+\theta_2 q_2)}] = \mathbb{E}[e^{\theta_1 \Upsilon_1+(\theta_1+\theta_2) \Upsilon_2}] = \frac{1}{(1-\theta_1)\big(1- \frac{\theta_1+\theta_2}{2\gamma}\big)}.
\end{equation*}
Under the condition $\mu_1 = \mu_2$, the functional equation given in Eq. \eqref{eq: n_sys_mgf_eq} is given by 
 \begin{align}
    L(\boldsymbol \theta) \big[ (1-2\gamma)\theta_2 & +\theta_2^2 -\theta_1+\theta_1^2\big] +(\theta_1-\theta_2) M_1(\boldsymbol \theta) +\frac{1}{\mu_1}\theta_2 M_2(\boldsymbol \theta)   =0,
\end{align}
Now, if we choose,
\begin{align*}
  L(\boldsymbol \theta) = \frac{1}{(1-\theta_1)\big(1- \frac{\theta_1+\theta_2}{2\gamma}\big)},&&
  M_1(\boldsymbol \theta) = \frac{1}{1- \frac{\theta_1+\theta_2}{2\gamma}}, &&
  M_2(\boldsymbol \theta) = \frac{2\gamma\mu_1}{1-\theta_1},
\end{align*}
then,
\begin{align}
    L(\boldsymbol \theta) &\big[ (1-2\gamma)\theta_2  +\theta_2^2 -\theta_1+\theta_1^2\big] +(\theta_1-\theta_2) M_1(\boldsymbol \theta) +\frac{1}{\mu_1}\theta_2 M_2(\boldsymbol \theta) \nonumber\\
    &= \frac{(1-2\gamma)\theta_2  +\theta_2^2 -\theta_1+\theta_1^2}{(1-\theta_1)\big(1- \frac{\theta_1+\theta_2}{2\gamma}\big)}+\frac{\theta_1-\theta_2}{1- \frac{\theta_1+\theta_2}{2\gamma}} + \frac{2\gamma\theta_2}{1-\theta_1}\nonumber\allowdisplaybreaks\\
    & = \frac{1}{(1-\theta_1)\big(1- \frac{\theta_1+\theta_2}{2\gamma}\big)} \left[  (1-2\gamma)\theta_2  +\theta_2^2 -\theta_1+\theta_1^2 + (1-\theta_1)(\theta_1-\theta_2) + 2\gamma\theta_2\left(1- \frac{\theta_1+\theta_2}{2\gamma}\right)\right]\nonumber\allowdisplaybreaks\\
    & = 0.
\end{align}
Thus, for the chosen solution $(L(\boldsymbol \theta) , M_1(\boldsymbol \theta), M_2(\boldsymbol \theta))$, the functional equation in Eq. \eqref{eq: n_sys_mgf_eq} is satisfied. And from Lemma \ref{lem: n_sys_uniqueness}, we know that there is a unique solution to the Eq. \eqref{eq: n_sys_mgf_eq}. Now, the result follows by using Lemma \ref{lem: laplace_convergence}.  \hfill $\blacksquare$
\endproof

\section{Proof of Lemma \ref{lem: functional_uniqueness}}
\label{app: functional_uniqueness}

\proof{\textit{Proof of Lemma \ref{lem: functional_uniqueness}}.}
Recall that the functional equation is given by 
\begin{equation*}
    \gamma(\boldsymbol \psi) \Phi (\boldsymbol \psi) + \gamma_1(\boldsymbol \psi) \Phi_1 (\psi_2) +\gamma_2(\boldsymbol \psi) \Phi_2 (\psi_1) = 0,
\end{equation*}
for all $\boldsymbol \psi \in \boldsymbol\Psi = \{\mathbf x \in \mathbb{C}^2:  Re( \mathbf x ) \leq \mathbf 0_2\}$, where $\Phi (\boldsymbol \psi), \Phi_1 (\psi_2)$ and $\Phi_2 (\psi_1)$ are analytic functions over the domain $\boldsymbol\Psi$, and $\gamma(\boldsymbol \psi), \gamma_1(\boldsymbol \psi)$ and $\gamma_2(\boldsymbol \psi)$ are given by
\begin{align*}
    \gamma(\boldsymbol \psi) &= \alpha_1 \psi_1 + \alpha_2 \psi_2 + \frac{1}{2} (\sigma_{11} \psi_1^2 + 2\sigma_{12} \psi_1\psi_2 + \sigma_{22}\psi_2^2),\\
    \gamma_1 (\boldsymbol \psi) &= r_{11} \psi_1 + r_{21}\psi_2,\\
     \gamma_2 (\boldsymbol \psi)  &= r_{12} \psi_1 + r_{22}\psi_2.
\end{align*}

Using this terminology, we define the algebraic functions $\Psi_1^{\pm}(\psi_2)$ and $\Psi_2^{\pm} (\psi_1)$ such that  
\begin{align*}
    \gamma(\psi_1,\Psi_2^{\pm} (\phi_1)) = \gamma(\Psi_1^{\pm} (\psi_2),\psi_2) = 0.
\end{align*}
By solving these equations, we get,
\begin{align*}
    \Psi_1^{\pm}(\psi_2) &= \frac{-(\sigma_{12}\psi_2 + \alpha_1) \pm \sqrt{\psi_2^2(\sigma_{12}^2 -\sigma_{11}\sigma_{22}) + 2\psi_2(\alpha_1\sigma_{12} -\alpha_2\sigma_{11}) +\alpha_1^2}}{ \sigma_{11}}, \\
    \Psi_2^{\pm}(\psi_1) &= \frac{-(\sigma_{12}\psi_1 + \alpha_2) \pm \sqrt{\psi_1^2(\sigma_{12}^2 -\sigma_{11}\sigma_{22}) + 2\psi_1(\alpha_2\sigma_{12} -\alpha_1\sigma_{22}) +\alpha_2^2}}{ \sigma_{22}}.
\end{align*}
Under the conditions given in Eq. \eqref{eq: funceqconditions}, the polynomials under the square root in the above equations have two zeros, which are real and of opposite signs, given by

\begin{align*}
    \psi_2^{\pm} &= \frac{(\alpha_1\sigma_{12} -\alpha_2\sigma_{11}) \pm \sqrt{(\alpha_1\sigma_{12} -\alpha_2\sigma_{11})^2 + \alpha_1^2 (\sigma_{11}\sigma_{22} - \sigma_{12}^2)}}{\sigma_{11}\sigma_{22}-\sigma_{12}^2   },\\
    \psi_1^{\pm} &= \frac{(\alpha_2\sigma_{12} -\alpha_1\sigma_{22}) \pm \sqrt{(\alpha_2\sigma_{12} -\alpha_1\sigma_{22})^2 + \alpha_2^2 (\sigma_{11}\sigma_{22} - \sigma_{12}^2)}}{\sigma_{11}\sigma_{22}-\sigma_{12}^2}.
\end{align*}
Note that $\psi_1^-<0$ and $\psi_2^-<0$, and similarly, $\psi_1^+>0$ and $\psi_2^+>0$.
Using these notations, we define the curve $\mathcal{R}$ to be
\begin{align*}
    \mathcal{R} = \{\psi_2\in\mathbb C: \gamma(\psi_1,\psi_2) = 0 \text{ and } \psi_1\in (-\infty,\psi_1^-)\}=\Psi_2^{\pm}((-\infty,\psi_1^-)).
\end{align*}
We use $\mathcal{G}_{\mathcal{R}}$ to denote the open domain in $\mathbb C$ containing $0$ and bounded by the curve $\mathcal R$.
Further, $\overline{\mathcal{G}}_{\mathcal{R}} = \mathcal{G}_{\mathcal{R}} \cup \mathcal{R}$ is the closure of the set $\mathcal{G}_{\mathcal{R}}$.

Finally, let $G$ be the function give by
\begin{align*}
    G(\psi_2) = \frac{\gamma_1}{\gamma_2}\big((\Psi_1^-(\psi_2),\psi_2)\big)\frac{\gamma_2}{\gamma_1}\big((\Psi_1^-(\overline \psi_2),\overline \psi_2)\big),
\end{align*}
where $\overline{\psi_2}$ is the complex conjugate of $\psi_2$.

\begin{lemma} \label{lem: meroextension}
The Laplace transform $\Phi_1(\psi_2)$ can be extended meromorphically to the open and simply connected set
\begin{align} \label{eq: continuation_domain}
    \big\{ \psi_2\in \mathbb C \backslash (\psi_2^-,\infty): Re(\psi_2)\leq 0 \text{ or } Re(\Psi_1^-(\psi_2))<0 \big\},
 \end{align}
by mean of the formula
\begin{align} \label{eq: continuation_formula}
    \Phi_1(\psi_2) = -\frac{\gamma_2}{\gamma_1}\big((\Psi_1^-(\psi_2),\psi_2)\big) \Phi_2(\Psi_1^-(\psi_2)).
\end{align}
\end{lemma}

Lemma \ref{lem: meroextension} is provided in \cite[Lemma 3]{franceschi2019integral}. Further, as given in \cite[Lemma 5]{franceschi2019integral}, the set \[\big\{ \psi_2\in \mathbb C \backslash (\psi_2^-,\infty): Re(\psi_2)\leq 0 \text{ or } Re(\Psi_1^-(\psi_2))<0 \big\}\] strictly contains the set $\overline{\mathcal{G}}_{\mathcal{R}}$. Thus, Lemma \ref{lem: meroextension} allows us to meromorphically extend the function $\Phi_1(\psi_2)$ to the set $\overline{\mathcal{G}}_{\mathcal{R}}$. 

By the continuation formula in Eq. \eqref{eq: continuation_formula}, the only pole for $\Psi_1$ in the domain given in Eq. \eqref{eq: continuation_domain} comes when the denominator $\gamma_1$ is zero. Suppose $p$ be the non-zero point such that 
\begin{align*}
    \gamma_1\big((\Psi_1^-(p),p)\big) =0.
\end{align*}
Simply solving the above equation gives us
\begin{align*}
    p = \frac{2r_{11}(\alpha_1r_{21}-\alpha_2r_{11})}{r_{11}^2 \sigma_{22} - 2r_{11}r_{21}\sigma_{12} + r_{21}^2\sigma_{11}}.
\end{align*}
Also, the pole $p$ of $\Psi_1$ is simple by the definition of $\gamma_1$. 

Now, we are equipped to write the Carleman Boundary Value Problem (BVP) for the function $\Phi_1$.

\begin{proposition}[Carleman BVP with shift]
\label{prop: carleman_bvp}
The function $\Phi_1$ is 
\begin{itemize}
    \item is meromorphic on $\mathcal{G}_{\mathcal{R}}$ with 
    \begin{itemize}
        \item without pole on $\mathcal{G}_{\mathcal{R}}$ if $\gamma_1(\psi_1^-, \Psi_2^-(\psi_1^-))<0$,
        \item with a single pole on $\mathcal{G}_{\mathcal{R}}$ at $p$ of order one if $\gamma_1(\psi_1^-, \Psi_2^-(\psi_1^-))>0$,
        \item without pole on $\mathcal{G}_{\mathcal{R}}$ and with a single pole of order one on the boundary $\mathcal{R}$ of $\mathcal{G}_{\mathcal{R}}$, at $p = \Psi_2^-(\psi_1^-)$ if $\gamma_1(\psi_1^-, \Psi_2^-(\psi_1^-))=0$,
    \end{itemize}
    \item is continuous on $\overline{\mathcal{G}}_{\mathcal{R}}\backslash\{p\}$ and bounded on infinity,
    \item satisfies the boundary condition
    \begin{align*}
        \Phi_1(\overline\psi_2) = G(\psi_2) \Phi_1(\psi_2), \ \ \ \ \forall \psi_2 \in \mathcal{R}.
    \end{align*}
\end{itemize}
\end{proposition}

The BVP presented in Proposition \ref{prop: carleman_bvp} is solvable and has a unique solution. A method of solving the BVP in Proposition \ref{prop: carleman_bvp} is presented in \cite{franceschi2019integral}. This implies that there is unique solution $\Phi_1(\psi_2)$ for the functional equation provided in Lemma \ref{lem: functional_uniqueness} for all $\psi_2$ such that $Re(\psi_2)\leq 0$. Similarly, the function $\Phi_2$ is also unique for all $\psi_1$ such that $Re(\psi_1)\leq 0$. This in turn implies that there is a unique $\Phi(\boldsymbol \psi)$ that satisfies the functional equation given in Lemma \ref{lem: functional_uniqueness} for all $\boldsymbol \psi \in \boldsymbol \Psi$. \hfill $\blacksquare$
\endproof

In order to provide more understanding of the proof of Lemma \ref{lem: functional_uniqueness}, we consider a simpler case, and reiterate the proof and the solution of the corresponding Carleman BVP.

\subsection{Simpler case: Orthogonal reflection and diagonal variance matrix}

By orthogonal reflection, we mean that $r_{11}=r_{22}=1$ and $r_{21}=r_{12} =0$; and diagonal variance matrix means $\sigma_{12}=0$. Under these conditions, the argument becomes much simpler as follows. 

Using $r_{11}=r_{22}=1$ and $r_{21}=r_{12} =0$, we have that $\gamma_1(\boldsymbol\psi) = \psi_1$ and $\gamma_2(\boldsymbol\psi) = \psi_2$. We set $g_{1}(\psi_2) =\frac{1}{\psi_2} \Phi_1(\psi_2)$ and $g_{2}(\psi_1) =\frac{1}{\psi_1} \Phi_2(\psi_1)$.
Then, Proposition \ref{prop: carleman_bvp} simplifies to following.
\begin{corollary}
\label{cor: simpler_bvp}
The function $g_1$ satisfies the following BVP:
\begin{enumerate}
    \item $g_1$ is meromorphic on $\mathcal{G}_{\mathcal{R}}$, with a single pole at $0$, or order 1 and residue $\Phi_1(0)$,
    \item $g_1$ is continuous on $\overline{\mathcal{G}}_{\mathcal{R}}\backslash\{0\}$ and vanishes on infinity,
    \item $g_1$ satisfies the boundary condition
    \begin{align*}
        g_1(\overline\psi_2) =  g_1(\psi_2), \ \ \ \ \forall \psi_2 \in \mathcal{R}.
    \end{align*} 
\end{enumerate}
\end{corollary}

The proof of Corollary \ref{cor: simpler_bvp} is provided in \cite[Proposition 3.2]{franceschi2017tutte}. We have also provided the arguments below for clarification.

\proof{\textit{Proof of Corollary \ref{cor: simpler_bvp}}.} 
When the element $\psi_2$ satisfies $Re(\psi_2)\leq0$, the argument in Part 1 and Part 2 follows by using the analytic properties of $\Phi_1(\psi_2)$ when $Re(\psi_2)<0$. In the domain of $\mathcal{G}_{\mathcal{R}}$ when $Re(\psi_2)\geq 0$, we use the continuation formula provided in Eq. \eqref{eq: continuation_formula} and the result follows, as $\mathcal{G}_{\mathcal{R}}$ is contained in the domain given in Eq. \eqref{eq: continuation_domain}. 

Further, for any $\psi_1 \in (-\infty,\psi_1^-)$, we have that 
\begin{equation*}
    \psi_1 \Phi_1 (\Psi_2^{\pm}(\psi_1)) + \Psi_2^{\pm}(\psi_1) \Phi_2(\psi_1) =0.
\end{equation*}
This gives us that,
\begin{align*}
    g_1 (\Psi_2^{-}(\psi_1)) + g_2(\psi_1)=g_1 (\Psi_2^{+}(\psi_1)) + g_2(\psi_1)  =0.
\end{align*}
Now, as $\Psi_2^{-}(\psi_1)$ and $\Psi_2^{+}(\psi_1)$ are complex conjugates of each other, we get the condition in Part 3. \hfill $\blacksquare$
\endproof

\begin{lemma}[Invariant lemma]
\label{lem: inv_lem}
The problem of finding functions $f$ such that 
\begin{enumerate}
    \item $f$ is analytic in $\mathcal{G}_{\mathcal{R}}$ and continuous in $\mathcal{G}_{\mathcal{R}}$,
    \item $f$ satisfies the boundary condition $f(\overline\psi_2) =  f(\psi_2), \ \forall \psi_2 \in \mathcal{R}$,
\end{enumerate}
does not have non-trivial solutions in the class of functions f vanishing at infinity.
\end{lemma}

\begin{lemma}[Conformal gluing function]
\label{lem: gluing_func}
Suppose $w(\psi)$ is given by 
\begin{equation*}
    w(\psi) = 2 \Big( \frac{2\psi - (\psi_2^++ \psi_2^-)}{\psi_2^+- \psi_2^-} \Big)^2 -1.
\end{equation*}
Then, $w$ satisfies
\begin{enumerate}
    \item $w$ is analytic in $\mathcal{G}_{\mathcal{R}}$, continuous in $\mathcal{G}_{\mathcal{R}}$ and unbounded at infinity,
    \item w is injective in $\mathcal{G}_{\mathcal{R}}$,
    \item $w$ satisfies the boundary condition $w(\overline\psi_2) =  w(\psi_2), \ \forall \psi_2 \in \mathcal{R}$.
\end{enumerate}

\end{lemma}
Lemma \ref{lem: gluing_func} is presented in \cite[Lemma 3.4]{franceschi2017tutte}. The function $w$ is called the conformal gluing function. In this section, we have considered a simple class of functional equations for which the corresponding conformal gluing function takes a simple form as given in Lemma \ref{lem: gluing_func}. For a more general class of functional equations, the conformal gluing functions take a more complicated form, which is beyond the scope of this paper.

Next, we introduce the function $f$ to be
\begin{align*}
    f(\psi_2) = g_1(\psi_2) - \Phi_1(0) \frac{w'(0)}{ w(\psi_2) - w(0)}
\end{align*}
The above function $f$ satisfies the conditions in Lemma \ref{lem: inv_lem}. The only possible pole for $f$ is at $0$ since $g_1$ has a unique pole at $0$ and $w$ is injective, so $w(\psi_2)-w(0)=0$ has only one solution. 
However, by construction of $w$, the residue of the function $f $ at $0$ is $0$, which implies that $0$ is a removable singularity. This shows that $f$ satisfies Part 1 of Lemma \ref{lem: inv_lem}. 
The function $f$ also satisfies Part 2 of  Lemma \ref{lem: inv_lem} as both $g_1$ and $w$ satisfies the boundary condition in Part 2. Furthermore, $f$ vanishes at infinity as $g_1$ vanishes at infinity, and $w$ is unbounded at infinity. Thus, $f$ satisfies all the conditions in Lemma \ref{lem: inv_lem}. As a conclusion of Lemma \ref{lem: inv_lem}, we get that $f =0$. 
Thus, we get the unique solution,
\begin{align*}
    \Phi_1(\psi_2) = \psi_2 g_1(\psi_2) = \psi_2 \Phi_1(0) \frac{w'(0)}{ w(\psi_2) - w(0)}.
\end{align*}
By using the definition of the function $w$, we get 
\begin{align*}
    \Phi_1(\psi_2) = -\frac{2\alpha_1\alpha_2}{\sigma_{22}} \frac{1}{\psi_2 + \frac{2\alpha_2}{\sigma_{22}}}.
\end{align*}
Similarly, one can solve for $\Phi_2(\psi_1) $ and then $\Phi(\boldsymbol \psi)$.

\end{APPENDICES}

\end{document}